\providecommand{\keywords}[1]{\smallskip\noindent\textbf{\emph{Keywords:}} #1}
\declaretheorem[numberwithin=section]{theorem}
\declaretheorem[sibling=theorem]{lemma,corollary,proposition}
\declaretheorem[sibling=theorem,style=definition]{example,definition,remark,observation,sketch}
\declaretheorem[name=Question,refname={Question,Questions},style=definition]{question}
\declaretheorem[name=Property,refname={Property,Properties},style=definition]{property}
\declaretheorem[name=Problem,refname={Problem,Problems},style=definition]{problem}
\crefname{equation}{}{}
\newcommand{\ttt}[1]{\texttt{#1}}
\newcommand{\mtt}[1]{\mathtt{#1}}
\DeclareMathOperator{\Per}{Per}
\DeclareMathOperator{\N}{\mathbb{N}}
\DeclareMathOperator{\Z}{\mathbb{Z}}
\DeclareMathOperator{\D}{\mathbb{D}}
\DeclareMathOperator{\rep}{rep}
\DeclareMathOperator{\val}{val}
\newcommand{\len}[1]{\left|#1\right|}
\begin{document}

\title{Positionality of Dumont--Thomas numeration systems for integers}

\author{Savinien  Kreczman\thanks{Supported by the FNRS Research Fellow grant 1.A.789.23F} }

\author{Manon Stipulanti\thanks{Supported by the FNRS Research grant  1.C.104.24F}}

\affil{Department of Mathematics, University of Li\`{e}ge\\
All\'{e}e de la D\'{e}couverte 12, 4000 Li\`{e}ge, Belgium\\
\url{{savinien.kreczman,m.stipulanti}@uliege.be}
}

\author{S\'{e}bastien Labb\'{e}\thanks{Supported by France's Agence Nationale de la Recherche (ANR) project IZES (ANR-22-CE40-0011).}}

\affil{Universit\'{e} de Bordeaux,  CNRS, Bordeaux INP, LaBRI,  UMR 5800,\\
  F-33400, Talence, France \\
\url{sebastien.labbe@labri.fr}}

\maketitle            
\begin{abstract}
Introduced in 2001 by Lecomte and Rigo,  abstract numeration systems provide a way of expressing natural numbers with words from a language $L$ accepted by a finite automaton.
As it turns out,  these numeration systems are not necessarily positional,  i.e.,  we cannot always find a sequence $U=(U_i)_{i\ge 0}$ of integers such that the value of every word in the language $L$ is determined by the position of its letters and the first few values of $U$.
Finding the conditions under which an abstract numeration system is positional seems difficult in general.
In this paper,  we thus consider this question for a particular sub-family of abstract numeration systems called Dumont--Thomas numeration systems. 
They are derived from substitutions and were introduced in 1989 by Dumont and Thomas.
We exhibit conditions on the underlying substitution so that the corresponding Dumont--Thomas numeration is positional.
We first work in the most general setting,  then particularize our results to some practical cases.
Finally,  we link our numeration systems to existing literature,  notably properties studied by R\'{e}nyi in 1957,  Parry in 1960, Bertrand-Mathis in 1989,  and Fabre in 1995.

\keywords{morphism,  substitution,  periodic point,  numeration system,  positionality, Bertrand property,  Fabre substitution.}
\end{abstract}

%
%
%

\section{Introduction}\label{sec: intro}

The need of representing natural numbers (or expressing them in writing) has occupied humans for centuries. 
In its most general form,  this leads to so-called \emph{abstract numeration systems},  introduced by Lecomte and Rigo in 2001~\cite{Lecomte-Rigo-2001} (see also~\cite[Chapter~3]{CANT10} for a general presentation).
Such a numeration system is defined by a triple $S=(L,A,\prec)$ where $A$ is an alphabet ordered by the total order $\prec$ and $L$ is an infinite \emph{regular} language over $A$ (i.e.,  accepted by a deterministic finite automaton). 
We say that $L$ is the \emph{numeration language} of $S$.
When we order the words of $L$ with the \emph{genealogical} order (i.e., first by length, then using the dictionary order) induced by $\prec$,  we obtain a one-to-one correspondence $\rep_S$ between $\N$ and $L$.
The \emph{($S$-)representation} of the non-negative integer $n$ is then the $(n+1)$st word of $L$, and the inverse map, called the \emph{($S$-)evaluation map}, is denoted by $\val_S$.
A simple example is given by the abstract numeration system $S$ built on the language $L=\ttt{1}^* \ttt{2}^*$ over the ordered alphabet $\{\ttt{1},\ttt{2}\}$.
The first few words in the language are $\varepsilon,\ttt{1},\ttt{2},\ttt{11},\ttt{12},\ttt{22},\ttt{111}$. 
For instance, $\rep_S(5)=\ttt{22}$ and $\val_S(\ttt{111})=6$.

In general,  a numeration system $S=(L,A,\prec)$ is \emph{positional} if the underlying alphabet $A$ is a set of consecutive integers $\{\ttt{0},\ttt{1},\ldots, \ttt{c}\}$ for some $\ttt{c}\in\N$ and if there exists a sequence $(U_i)_{i\ge 0}$ of non-negative integers such that the evaluation map is of the form $\val_S \colon A^* \to \N, w_{k-1}\cdots w_0 \mapsto \sum_{i=0}^{k-1} w_i U_i$.
We use the term \emph{positional} because the \emph{positions} of letters in representations are used to generate numbers. 
Observe that the numeration system built on $L=\ttt{1}^* \ttt{2}^*$ cannot be positional: indeed,  we have $\rep_S(3)=\ttt{11}$ and $\rep_S(5)=\ttt{22}$,  so there is no hope to find an integer sequence $(U_i)_{i\ge 0}$ such that $3 = 1\cdot U_1 + 1\cdot  U_0$ and $5 = 2 \cdot U_1 + 2 \cdot U_0$ (see also~\cite[Example~3.1.12]{CANT10}).
We thus raise the following question (see also~\cite[Exercise~3.13]{CANT10}):

\begin{question}
\label{quest:ANS case}
What are the conditions for an abstract numeration system to be positional?
\end{question}

As this question seems difficult to answer in its full generality,  we consider a particular case with Question~\ref{quest:DT case} below.
Roughly,  we consider numeration systems that are derived from \emph{substitutions},  i.e.,  maps sending sequences to sequences and satisfying some mild properties.
These numeration systems are due to Dumont and Thomas~\cite{Dumont-Thomas-1989} in 1989.
(Note that to properly state the weaker version of Question~\ref{quest:ANS case},  we introduce some notation and definitions in~\cref{sec:prelim}.)
Since then,  Dumont--Thomas numeration systems have been used to solve various problems as they provide a nice framework to work with. 
As the literature is quite vast,  we focus on some recent papers from the 2020's only. 
For instance,  as Question~\ref{quest:DT case} already testifies,  Dumont--Thomas numerations fail to satisfy natural properties that are however somewhat desirable for numeration systems.
Furthermore,  they are typically neither greedy nor \emph{addable},  i.e.,  addition is not computable by a finite automaton. 
The latter condition is needed in particular if one wants to make use of the theorem-prover \verb!Walnut!~\cite{Walnut1,Walnut2}.
Recently,  a characterization of such addable numeration systems was found~\cite{Carton-Couvreur-Delacourt-Ollinger-2024}. 
Generalizations of Dumont--Thomas numeration systems as numeration systems \emph{per se} can be found in~\cite{MR4836876,Surer-2020}.
In~\cite{Gheeraert-Romana-Stipulanti-2023,Gheeraert-Romana-Stipulanti-2024},  classical Dumont--Thomas numeration systems  are used to find string attractors for infinite words.
In~\cite{Miro-Rust-Sadun-Tadeo-2023}, the authors use extensions of Dumont--Thomas numeration systems to the setting of random substitutions.
In~\cite{Marshall-Maldonado-2024},  the specific case of the Thue--Morse substitution  is used to establish uniform bounds for the twisted correlations for all elements in the Thue--Morse subshift.
In this paper,  we exhibit conditions on the underlying substitution for the corresponding Dumont--Thomas numeration for $\Z$ to be positional.

The outline of the paper is as follows.
In~\cref{sec:prelim},  we present the necessary background and preliminary results.
In particular,  we present the original generalization of Dumont--Thomas numerations to all integers in~\cref{sec:DT for N and Z}, then generalize it further in~\cref{sec:DT for N and Z with various residues} by letting the lengths of representations vary.
In~\cref{sec: positionality of DT}, we study which Dumont--Thomas numeration systems are positional to answer Question~\ref{quest:DT case}.
We start with a sketch in~\cref{sec: sketch},  then we prove our main result in~\cref{sec:mainresult}.
We turn to particular cases in~\cref{sec:particularizations} and we finish by discussing the properties of our Dumont--Thomas numeration systems related to existing literature,  e.g.,  the property of a numeration system to be Bertrand~\cite{Bertrand-Mathis-1989,Charlier-Cisternino-Stipulanti-2022}. 

We note that the present paper is a long version of the article~\cite{Kreczman-Labbe-Stipulanti-2025} presented at the conference \emph{WORDS 2025}.
Compared to the conference version,  this paper contains all the proofs of the results, a more elaborate introduction to Dumont--Thomas numeration systems in \cref{sec:DT for N and Z,sec:DT for N and Z with various residues}, and more complete discussions throughout the paper.

\section{Preliminaries}\label{sec:prelim}

\textbf{General combinatorics on words}.
An \emph{alphabet} $A$ is a finite set and its elements are called \emph{letters}.
A (\emph{finite} or \emph{infinite}) \emph{word} over $A$ is a (finite or infinite) sequence of letters belonging to $A$.
For a finite word $w$ over $A$,  we let $|w|$ denote its \emph{length},  i.e.,  the number of letters it is made of. The \emph{empty word} is denoted by $\varepsilon$. 
For a word $w$ over $A$,  a \emph{factor} of $w$ is a word $y$ such that there exist words $x,z$ such that $w=xyz$.
A \emph{prefix} (resp. \emph{suffix}) is a factor $y$ such that $x$ (resp. $z$) is empty.
If $w=ps$ for some words $p,s$,  then we write $p^{-1}w=s$ and $ws^{-1}=p$. 
For a word $w$  we let $w_i$ denote its $i$-th letter.
We use the notation of intervals to indicate portions of words: if $I$ is an interval of integers,  we let $w_I$ denote the factor $(w_i)_{i\in I}$ of $w$. (Recall the difference between parentheses and square brackets to denote intervals.)
We let $A^*$ denote the set of all finite words over the alphabet $A$ and $A^+$ the set of all non-empty words over $A$ (note that $A^+ = A^* \setminus \{\varepsilon\}$).
For $\D\in\{\N,\Z, \Z_{<0}\}$,  we let  $A^{\D}$ the set of words indexed over $\D$.
We call $u_0 u_1 u_2 \cdots \in A^{\N}$ a \emph{right-infinite} word and $\cdots u_{-3} u_{-2} u_{-1} \in A^{\Z_{< 0}}$ a \emph{left-infinite} word.
A \emph{two-sided} word is a word indexed over $\Z$.
For convenience,  we separate by a vertical bar its $-1$-th and $0$-th elements to indicate the origin,  i.e.,   $u = \cdots u_{-3} u_{-2} u_{-1}|u_0u_1u_2 \cdots$.
If the alphabet $A$ is totally ordered by $\prec$,  we define the \emph{lexicographic order} on  $A^{*}$ and $A^{\N}$ as follows.
For two words $u,v\in A^*$,  we write $u \prec_{\text{lex}} v$ if $u$ is a proper prefix of $v$ or if there exist $p,s,s'\in A^*$ and $a,a'\in A$ such that $u=pas$, $v=pa's'$,  and $a \prec a'$.
For two words $u,v\in A^{\N}$,  we write $u \prec_{\text{lex}} v$ if there exist $p\in A^*$,  $x,x'\in A^{\N}$,  and $a,a'\in A$ such that  $u=pax$, $v=pa'x'$,  and $a \prec a'$.
For finite or infinite words $u,v$,  we write $u \preccurlyeq_{\text{lex}} v$ if $u \prec_{\text{lex}} v$ or $u=v$.

\textbf{Morphisms and substitutions}.
Given alphabets $A,B$,  a \emph{morphism} is a map $\mu \colon A^* \to B^*$ such that $\mu (uv) = \mu(u) \mu(v)$ for all words $u,v\in A^*$.
A morphism is entirely determined by the images of the letters of $A$.
A \emph{substitution} is a morphism $\mu \colon A^* \to A^*$ such that the image $\mu(a)$ is non-empty for every letter $a\in A$ and there exists a \emph{growing} letter $a\in A$,  i.e.,  $\lim_{n\to + \infty} |\mu^n(a)| = + \infty$.
A morphism $\mu \colon A^* \to A^*$ is \emph{primitive} if there exists an integer $k\in\N$ such that for all $a, b\in A$,  the letter $a$ appears in $\mu^k(b)$.
Note that,  if we consider the classical adjacency matrix $M$ of $\mu \colon A^* \to A^*$,  $\mu$ is  primitive if and only if $M$ is primitive. From the theory of primitive matrices, we then deduce that if a morphism is primitive, then there exists an integer $k\in\N$ such that for all $a, b\in A$ and for all $\ell\geq k$,  the letter $a$ appears in $\mu^\ell(b)$.

With a morphism $\mu \colon A^* \to A^*,$  we can associate a directed graph in the following way: the nodes are the letters of $A$ and for every $a\in A$,  if we write $\mu(a)=c_0 \cdots c_\ell$ with $c_i \in A$ for every $i\in\{0,\ldots,\ell\}$,  then we draw an arrow labeled by $i$ from $a$ to $c_i$ for every $i\in\{0,\ldots,\ell\}$.
This directed graph encodes the images of letters under $\mu$.
Similarly,  we may unfold the directed graph into a directed tree starting with some fixed vertex that is then called the \emph{root} of the tree. 
Given $a\in A$,  we define the tree $\mathcal{T}_{\mu,a}$ as follows: its root is labeled by $a$, and if a node of the tree is labeled by $x$ and $\mu(x)=y_0\cdots y_\ell$ then that node has $\ell+1$ children labeled from $y_0$ to $y_\ell$, with the edge from $x$ to $y_i$ being labeled by $i$ with $0\le i \le \ell$.
Observe that the $k$-th level of $\mathcal{T}_{\mu,a}$ stores the $k$-th iteration of $\mu$ on the letter $a$.
Given an integer $n\ge 0$,  we say that a node of the tree is \emph{in column n} if there are $n$ nodes of the same level to its left.

Substitutions can naturally be applied to two-sided words by setting
\[
\mu(\cdots u_{-3} u_{-2} u_{-1}|u_0u_1u_2 \cdots)
=
\cdots \mu(u_{-3})  \mu(u_{-2}) \mu(u_{-1}) |\mu(u_0) \mu(u_1) \mu(u_2) \cdots
\]
Let $\D\in\{\N,\Z, \Z_{<0}\}$ and consider a substitution $\mu$ over $A$.
A word $u\in A^{\D}$ is a \emph{periodic point of $\mu$} if there exists an integer $p\ge 1$ such that $\mu^p(u)=u$. 
In this case,  $p$ is called a \emph{period} of the periodic point $u$.
The smallest such integer is called the \emph{period of $u$}. 
A periodic point of $\mu$ with period $p = 1$ is called a \emph{fixed point} of $\mu$. 
We let $\Per_{\D}(\mu)= \{u\in A^{\D} \mid \mu^p(u)=u \text{ for some $p\ge 1$} \}$ denote the set of periodic points of $\mu$.
If $u \in \Per_{\Z}(\mu)$,  then the \emph{seed} of $u$ is the pair of letters $u_{-1}|u_0$; see~\cite[\S 4.1]{Baake-Grimm-2013}. 
If both letters of the seed of a two-sided periodic
point are growing, then the periodic point is defined entirely by its seed. 
More precisely,  we have $u = \lim_{n\to+\infty} \mu^{np}(u_{-1})|\lim_{n\to+\infty} \mu^{np}(u_0)$, where $p$ is a period of $u$.
By extension of the tree associated with a substitution and a letter as in the previous paragraph,  we consider two-sided trees as follow. 
For a substitution $\mu$ over $A$ and two letters $a,b\in A$,  the tree $\mathcal{T}_{\mu,b|a}$ is obtained by setting a start root having two children: the left one is reached with an edge of label $\ttt{1}$ and is the root of the tree $\mathcal{T}_{\mu,b}$,  and the right one is reached with an edge of label $\ttt{0}$ and is the root of the tree $\mathcal{T}_{\mu,a}$ (see~\cref{fig: double tree} for an illustration).  For an integer $n$,  we say that a node is \emph{in column $n$} if either $n\geq 0$, the node is the right subtree and there are $n$ nodes on the same level to its left in the right subtree, or $n<0$, the node is in the left subtree and there are $-n-1$ nodes on the same level to its right in the left subtree.
Examples of such trees are given later in the paper.

\begin{figure}
\centering
\begin{tikzpicture}[xscale=1, yscale=1.2, >=latex,
node0/.style={},
nodeM/.style={fill=cyan!50},
edgeLabel/.style={fill=white, inner sep=1pt}]

    \node                 (O)   at (0,0.5) {};
    \node[rectangle,draw,nodeM] (root) at (0,0) {start};
    \draw[->] (O) to (root);

\node (Tb) at (-1.6,-1.8) {};
\node (Ta) at (1.7,-1.8) {};

\draw[->] (root) -- node[edgeLabel, pos=0.2] {\scriptsize \ttt{1}} (Tb);
\draw[->] (root) -- node[edgeLabel, pos=0.2] {\scriptsize \ttt{0}} (Ta);

\node at (-2.1,-1.8) {$\mathcal{T}_{\mu,b}$};
\node at (2.1,-1.8) {$\mathcal{T}_{\mu,a}$};

\begin{scope}[on background layer]

\draw[dashed]
  ($(Tb)+(-0.9,0.6)$)
  .. controls ($(Tb)+(-1.2,0)$) and ($(Tb)+(-1,-0.8)$)
  .. ($(Tb)+(0,-0.9)$)
  .. controls ($(Tb)+(0.8,-0.7)$) and ($(Tb)+(1,0)$)
  .. ($(Tb)+(0.6,0.8)$)
  .. controls ($(Tb)+(0,1)$) and ($(Tb)+(-0.6,0.7)$)
  .. cycle;

\draw[dashed]
  ($(Ta)+(-0.9,0.6)$)
  .. controls ($(Ta)+(-1.2,0)$) and ($(Ta)+(-1,-0.8)$)
  .. ($(Ta)+(0,-0.9)$)
  .. controls ($(Ta)+(0.8,-0.7)$) and ($(Ta)+(1,0)$)
  .. ($(Ta)+(0.6,0.8)$)
  .. controls ($(Ta)+(0,1)$) and ($(Ta)+(-0.6,0.7)$)
  .. cycle;

\end{scope}
\end{tikzpicture}
\caption{The tree $\mathcal{T}_{\mu,b|a}$ associated with a substitution $\mu$ over $A$ and two letters $a,b\in A$.}
\label{fig: double tree}
\end{figure}

\textbf{Numeration systems}.
A \emph{numeration system} over the \emph{domain} $\D\in\{\N,\Z_{<0},\Z\}$ is a pair of maps between $\D$ and a set of words,  i.e.,  the \emph{representation map} $\rep\colon \D\to A^*$ for some alphabet $A$, and the \emph{evaluation map} $\val\colon L\to \D$, where $\rep(\D)\subset L\subset A^*$, such that $\val\circ\rep=id_{\D}$. The set $\rep(\D)$ is the \emph{language} of the numeration system, while $L$ contains some additional, non-canonical representations. 
In general,  a numeration system over $\D=\N$ is \emph{positional} if the underlying alphabet $A$ is a set of consecutive integers $\{\ttt{0},\ttt{1},\ldots, \ttt{c}\}$ for some $\ttt{c}\in\N$ and the evaluation map is of the form $\val \colon A^* \to \N, w_{k-1}\cdots w_0 \mapsto \sum_{i=0}^{k-1} w_i U_i$ for some sequence $U=(U_i)_{i\ge 0}\in\N^{\N}$. 
Over $\D=\Z$,  a numeration system is \emph{positional} if the underlying alphabet $A$ is a set of consecutive integers $\{\ttt{0},\ttt{1},\ldots, \ttt{c}\}$ for some $\ttt{c}\in\N$ and the evaluation map is of the form $\val \colon A^* \to \Z, w_{k-1}\cdots w_0 \mapsto \sum_{i=0}^{k-2} w_i U_i - w_{k-1}V_{k-1}$ for some sequences $U=(U_i)_{i\ge 0},V=(V_i)_{i\ge 0}\in\N^{\N}$. 
The sequences $U$ and $V$ are the sequences of \emph{weights} of the numeration system. 
Every position has a given weight, while the presence of an additional sequence $V$ helps deal with the representation of negative numbers, in a fashion similar to the usual two's complement numeration system described in the next example.

\begin{example}
The two's complement numeration system allows representations of all integers in a binary system using powers of $2$.
Let $A=\{\ttt{0},\ttt{1}\}$.
The evaluation map is defined as follows: for a word $w=w_{k-1}w_{k-2} \cdots w_0$ over $A$,  we set $\val_{2c}(w)=-w_{k-1} 2^{k-1} + \sum_{i=0}^{k-2} w_i 2^i$. 
Observe now that,  for any word $w$ over $A$,  we have $\val_{2c}(\ttt{00}w)=\val_{2c}(\ttt{0}w)$ and $\val_{2c}(\ttt{11}w)=\val_{2c}(\ttt{1}w)$.
So for any integer $n\in\Z$,  there exists a unique word $w\in A^*\setminus(\ttt{00}A^*\cup\ttt{11}A^*)$ such that $n=\val_{2c}(w)$.
We let $\rep_{2c}(n)$ denote this unique word and we call it the \emph{two's complement representation} of $n$.
The first few two's complement representations of integers are $(\rep_{2c}(n))_{-4\le n\le 4}=\ttt{100},  \ttt{101}, \ttt{10},  \ttt{1},  \varepsilon,  \ttt{01},  \ttt{010}, \ttt{011},  \ttt{0100}$.
Observe that the two's complement numeration system is positional: we may use the sequence of powers of $2$ as weights.
\end{example}

\subsection{Dumont--Thomas numeration systems}
\label{sec:DT for N and Z}

In this section,  we expose the theory of Dumont and Thomas on numeration systems~\cite{Dumont-Thomas-1989}.
Roughly,  a substitution $\mu$ defines a numeration system where the representation of an integer $n$ is the label of a path from the root to column $n$ in a tree associated with $\mu$.

We first recall the notion of admissible sequences for a substitution.
Let $\mu \colon A^*\to A^*$ be a substitution. 
Fix a letter $a\in A$ and an integer $k$.
For every $i \in \{0,\ldots,k\}$,  we consider a pair $(m_i,a_i) \in A^*\times A$.
The sequence $((m_i,a_i))_{i=0,\dots,k}$ is \emph{admissible with respect to $\mu$} if for every $i \in \{1,\ldots,k\}$,  $m_{i-1}a_{i-1}$ is a prefix of $\mu(a_i)$. 
This sequence is \emph{$a$-admissible with respect to $\mu$} if it is admissible with respect to $\mu$ and $m_ka_k$ is a prefix of $\mu(a)$.
When the context is clear,  we simply say \emph{admissible} or \emph{$a$-admissible} without
specifying the substitution.
See~\cref{fig: a-admissible illustration} for an illustration.

\begin{figure}
\centering
\begin{tikzpicture}
\node[] (a) at (-1,0.2)  {$\mu^0(a)$};
\draw [draw=black] (0,0) rectangle (0.4,0.4);
\node[] (a) at (0.2,0.2)  {$a$};
\draw[dashed]   (0,0)  -- (0,-1);
\draw[dashed]   (0.4,0)  -- (2.7,-0.6);
\node[] (a) at (-1,-0.8)  {$\mu(a)$};
\draw [draw=black] (0,-1) rectangle (2.7,-0.6);
\node[] (a) at (0.5,-0.8)  {$m_2$};
\draw [draw=black] (0,-1) rectangle (1,-0.6);
\draw[dashed]   (0,-1)  -- (0,-1.6);
\node[] (a) at (1.2,-0.8)  {$a_2$};
\draw [draw=black] (1,-1) rectangle (1.4,-0.6);
\draw[dashed]   (1,-1)  -- (1.5,-1.6);
\draw[dashed]   (1.4,-1)  -- (3,-1.6);
\draw[dashed]   (2.7,-1)  -- (4.1,-1.6); 
\node[] (a) at (-1,-1.8)  {$\mu^2(a)$};
\draw [draw=black] (0,-2) rectangle (4.1,-1.6);
\node[] (a) at (0.75,-1.8)  {$\mu(m_2)$};
\node[] (a) at (1.8,-1.8)  {$m_1$};
\draw [draw=black] (1.5,-2) rectangle (2.1,-1.6);
\draw[dashed]   (0,-2)  -- (0,-2.6);
\draw[dashed]   (1.5,-2)  -- (2,-2.6);
\node[] (a) at (2.3,-1.8)  {$a_1$};
\draw [draw=black] (2.1,-2) rectangle (2.5,-1.6);
\draw[dashed]   (2.1,-2)  -- (3,-2.6);
\draw[dashed]   (2.5,-2)  -- (4.8,-2.6);
\draw[dashed]   (4.1,-2)  -- (5.8,-2.6); 
\node[] (a) at (-1,-2.8)  {$\mu^3(a)$};
\draw [draw=black] (0,-3) rectangle (5.8,-2.6);
\node[] (a) at (1,-2.8)  {$\mu^2(m_2)$};
\draw [draw=black] (2,-3) rectangle (3,-2.6);
\node[] (a) at (2.5,-2.8)  {$\mu(m_1)$};
\node[] (a) at (3.5,-2.8)  {$m_0$};
\draw [draw=black] (3,-3) rectangle (4,-2.6);
\node[] (a) at (4.2,-2.8)  {$a_0$};
\draw [draw=black] (4,-3) rectangle (4.4,-2.6);
\end{tikzpicture}
\caption{An illustration of what it means for a sequence $((m_i,a_i))_{0\le i \le 2}$ to be $a$-admissible with respect to a substitution $\mu$.}
\label{fig: a-admissible illustration}
\end{figure}
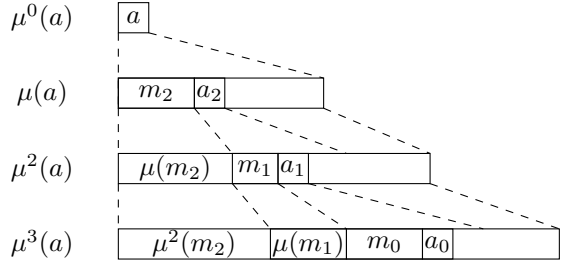

\begin{example}
\label{ex:Trib}
Consider the Tribonacci substitution $\tau \colon a \mapsto ab,  b \mapsto ac,  c \mapsto a$.
The tree $\mathcal{T}_{\tau,a}$ in~\cref{fig:DTNS-for-Trib} is defined as in~\cref{sec:prelim} and allows us to compute $a$-admissible sequences.
For instance,  the sequence $(m_0,a_0)=(\varepsilon,a),(m_1,a_1)=(\varepsilon,a),(m_2,a_2)=(a,b)$ is $a$-admissible and corresponds to the path in bold in $\mathcal{T}_{\tau,a}$.
Note that the sequences $(a,c),(a,b)$ and $(a,c),(a,b),(\varepsilon,a)$ are also $a$-admissible and lead to nodes labeled by $c$ and colored (in orange) in $\mathcal{T}_{\tau,a}$,  both in column $3$.                                                                                                                                                                                                                                                                                                                                                                                                                                                                                                                                                                                                                 
\begin{figure}[h]
\centering
\subfloat[\label{fig:DTNS-for-Trib-orientedgraph}]
{
\begin{tikzpicture}[scale=1.6, >=latex,
node0/.style={},
nodeM/.style={fill=cyan!50},
nodeA/.style={fill=orange!50}]
    \node[rectangle,draw] (a) at (.5,1) {$a$};
    \node[rectangle,draw] (b) at (0,0) {$b$};
    \node[rectangle,draw] (c) at (1,0) {$c$};
    \draw[->,loop left] (a) to node[left] {\scriptsize $0$} (a);
 \draw[->,bend right=15] (a) to node[fill=white,inner sep=2pt] {\scriptsize $1$} (b);
    \draw[->,bend right=15] (b) to node[fill=white,inner sep=2pt] {\scriptsize $0$} (a);
        \draw[->] (b) to node[fill=white,inner sep=2pt] {\scriptsize $1$} (c);
    \draw[->] (c) to node[fill=white,inner sep=2pt] {\scriptsize $0$} (a);
\end{tikzpicture}
}
\hspace{1cm}
\subfloat[\label{fig:DTNS-for-Trib-tree}]
{
\begin{tikzpicture}[xscale=.75, yscale=1.4, >=latex,
node0/.style={},
nodeM/.style={fill=cyan!50},
nodeA/.style={fill=orange!50}]
    \node[rectangle,draw,node0] (var) at (0,0)  {$a$};
    \node[rectangle,draw,node0] (0) at (0,-1) {$a$};
    \node[rectangle,draw,node0] (1) at (1,-1) {$b$};
    \node[rectangle,draw,node0] (00) at (0,-2) {$a$};
    \node[rectangle,draw,node0] (01) at (1,-2) {$b$};
    \node[rectangle,draw,node0] (10) at (2,-2) {$a$};
    \node[rectangle,draw,nodeA] (11) at (3,-2) {$c$};
    \node[rectangle,draw,node0] (000) at (0,-3) {$a$};
    \node[rectangle,draw,node0] (001) at (1,-3) {$b$};
    \node[rectangle,draw,node0] (010) at (2,-3) {$a$};
    \node[rectangle,draw,nodeA] (011) at (3,-3) {$c$};
    \node[rectangle,draw,node0] (100) at (4,-3) {$a$};
    \node[rectangle,draw,node0] (101) at (5,-3) {$b$};
    \node[rectangle,draw,node0] (110) at (6,-3) {$a$};
    \node[] (zer) at (0,-3.5)  {$0$};
    \node[] (un) at (1,-3.5)  {$1$};
    \node[] (2) at (2,-3.5)  {$2$};
    \node[] (3) at (3,-3.5)  {$3$};
    \node[] (4) at (4,-3.5)  {$4$};
    \node[] (5) at (5,-3.5)  {$5$};
    \node[] (6) at (6,-3.5)  {$6$};
    \draw[->] (var)  -- node[fill=white,inner sep=2pt] {\scriptsize \ttt{0}} (0);
    \draw[->,line width=0.4mm] (var)  -- node[fill=white,inner sep=2pt] {\scriptsize \ttt{1}} (1);
    \draw[->] (0) -- node[fill=white,inner sep=2pt] {\scriptsize \ttt{0}} (00);
    \draw[->] (0) -- node[fill=white,inner sep=2pt] {\scriptsize \ttt{1}} (01);
    \draw[->,line width=0.4mm] (1) -- node[fill=white,inner sep=2pt] {\scriptsize \ttt{0}} (10);
    \draw[->] (1) -- node[fill=white,inner sep=2pt] {\scriptsize \ttt{1}}(11);
     \draw[->] (00) -- node[fill=white,inner sep=2pt] {\scriptsize \ttt{0}} (000);
     \draw[->] (00) -- node[fill=white,inner sep=2pt] {\scriptsize \ttt{1}} (001);
     \draw[->] (01) -- node[fill=white,inner sep=2pt] {\scriptsize \ttt{0}} (010);
     \draw[->] (01) -- node[fill=white,inner sep=2pt] {\scriptsize \ttt{1}} (011);
     \draw[->,line width=0.4mm] (10) -- node[fill=white,inner sep=2pt] {\scriptsize \ttt{0}} (100);
      \draw[->] (10) -- node[fill=white,inner sep=2pt] {\scriptsize \ttt{1}} (101);
       \draw[->] (11) -- node[fill=white,inner sep=2pt] {\scriptsize \ttt{0}} (110);
\end{tikzpicture}
}
\caption{On the left,  the directed graph associated with the Tribonacci substitution $\tau\colon a\mapsto ab,  b\mapsto ac, c\mapsto a$. 
On the right,  the tree $\mathcal{T}_{\tau,a}$ displays all $a$-admissible sequences of length at most $3$ for the Tribonacci substitution $\tau$ and growing letter $a$. 
}
\label{fig:DTNS-for-Trib}
\end{figure}
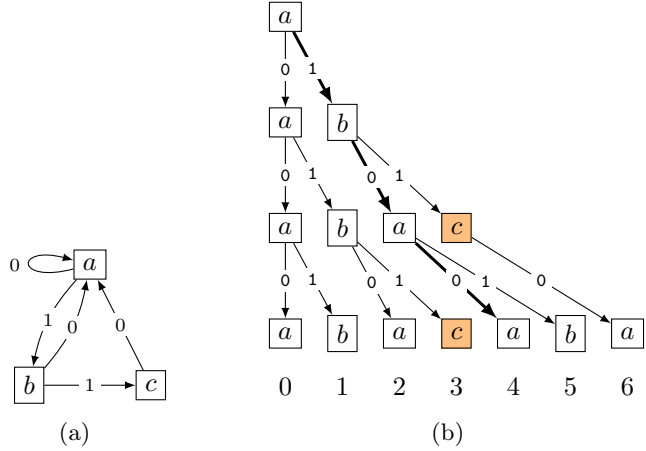
\end{example}

As illustrated in~\cref{ex:Trib},  given a right-infinite fixed point of a substitution, multiple paths in the corresponding tree can lead to nodes in the same column.
However,  there is a unique path starting from the root of the tree and labeled by a word not starting with $0$.
The following result,  shown by Dumont and Thomas,  essentially formalizes this and states that there is a unique admissible sequence for every prefix of the fixed point.

\begin{theorem}\label{thm:dt-unique-adminseq}
    {\rm\cite[Theorem 1.5]{Dumont-Thomas-1989}}
    Let $a\in A$ and let $\mu:A^*\to A^*$ be a substitution. 
    Let $u$ be a right-infinite fixed point of $\mu$ with growing seed $u_0 = a$.
    For every integer $n\geq1$,
    there exist a unique integer $k=k(n)$ and a unique sequence
    $((m_i,a_i))_{i=0,\dots,k-1}$ such that the sequence is $a$-admissible, $m_{k-1}\neq\varepsilon$,  and $ u_0u_1\cdots u_{n-1} = \mu^{k-1}(m_{k-1}) \mu^{k-2}(m_{k-2}) \cdots \mu^0(m_0)$.
    \end{theorem}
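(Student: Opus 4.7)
The plan is to reduce the statement to a cleaner auxiliary claim about the trees $\mathcal{T}_{\mu,c}$ introduced in Section~\ref{sec:prelim}, and then deduce the theorem by specialization. The auxiliary claim is: for every letter $c\in A$, every $k\geq 0$, and every integer $n$ with $0\leq n<|\mu^k(c)|$, there is a unique $c$-admissible sequence $((m_i,a_i))_{i=0,\dots,k-1}$ such that $\mu^{k-1}(m_{k-1})\mu^{k-2}(m_{k-2})\cdots m_0$ equals the length-$n$ prefix of $\mu^k(c)$. Geometrically, this says there is a unique path of length $k$ from the root of $\mathcal{T}_{\mu,c}$ down to the node in column $n$.

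I would prove this claim by induction on $k$. The base case $k=0$ forces $n=0$ and is trivial with the empty sequence. For the inductive step, I would write $\mu(c)=b_0b_1\cdots b_\ell$, split $\mu^k(c)=\mu^{k-1}(b_0)\cdots\mu^{k-1}(b_\ell)$, and use the unique index $j$ with $\sum_{i<j}|\mu^{k-1}(b_i)|\leq n<\sum_{i\leq j}|\mu^{k-1}(b_i)|$ to set $m_{k-1}=b_0\cdots b_{j-1}$ and $a_{k-1}=b_j$. The remaining position $n'=n-|\mu^{k-1}(m_{k-1})|$ lies in $[0,|\mu^{k-1}(a_{k-1})|)$, so the induction hypothesis applied to the letter $a_{k-1}$ and depth $k-1$ uniquely completes the decomposition.

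To derive the stated theorem, I would first note that $u$ being a fixed point with $u_0=a$ forces $\mu(a)$ to start with $a$, and $a$ being growing then forces $|\mu(a)|\geq 2$; hence $(|\mu^k(a)|)_{k\geq 0}$ is strictly increasing and diverges. Given $n\geq 1$, I take $k$ to be the unique integer with $|\mu^{k-1}(a)|\leq n<|\mu^k(a)|$. Since $u=\mu^k(u)$ begins with $\mu^k(a)$, the word $u_0\cdots u_{n-1}$ is exactly the length-$n$ prefix of $\mu^k(a)$, and the auxiliary claim with $c=a$ produces a unique $a$-admissible sequence of length $k$; the inequality $n\geq|\mu^{k-1}(a)|$ forces the index $j$ from the construction to satisfy $j\geq 1$, equivalently $m_{k-1}\neq\varepsilon$.

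Finally, for the uniqueness of $k$ itself, any $a$-admissible sequence witnessing the decomposition with $m_{k'-1}\neq\varepsilon$ has $m_{k'-1}$ a non-empty prefix of $\mu(a)$, hence starts with $a$, which gives $|\mu^{k'-1}(m_{k'-1})|\geq|\mu^{k'-1}(a)|$ and in particular $n\geq|\mu^{k'-1}(a)|$; and since $m_{k'-1}a_{k'-1}$ is a prefix of $\mu(a)$, the total decomposed length is strictly less than $|\mu^{k'-1}(\mu(a))|=|\mu^{k'}(a)|$. Thus $k'$ also lies in the interval that uniquely determines $k$, so $k'=k$, and the auxiliary claim then delivers uniqueness of the sequence itself. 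I do not expect any serious obstacle; the bulk of the work is careful bookkeeping of indices in the recursion, and the only subtle point is the equivalence between $m_{k-1}\neq\varepsilon$ and $n\geq|\mu^{k-1}(a)|$, which underpins the uniqueness of $k$.
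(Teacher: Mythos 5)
Your proof is correct, and it is worth noting that the paper itself does not prove this statement (it is quoted from Dumont--Thomas); the closest in-paper argument is the proof of the generalization \cref{thm: unique admin for right infinite periodic len r}, whose skeleton is exactly yours: pin down $k$ by squeezing $n$ between $\len{\mu^{k-1}(a)}$ and $\len{\mu^{k}(a)}$, decompose the length-$n$ prefix of $\mu^k(a)$ via the prefix-decomposition lemma, and translate $m_{k-1}\neq\varepsilon$ into the lower bound $n\geq\len{\mu^{k-1}(a)}$ to get uniqueness of $k$. The genuine difference is that you re-prove that decomposition lemma (the paper's \cref{lem: DT decomposition for proper prefix}, which the paper imports from the literature) by induction on the depth of the tree $\mathcal{T}_{\mu,c}$, making the argument self-contained, whereas the paper treats it as a black box. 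One point you should make explicit: both the uniqueness of the splitting index $j$ inside your induction (ruling out a competing index $j'<j$) and your claim that the total decomposed length of any admissible sequence of length $k'$ is strictly less than $\len{\mu^{k'}(a)}$ rely on the bound $\sum_{i=0}^{k-1}\len{\mu^{i}(m_i)}<\len{\mu^{k-1}(m_{k-1}a_{k-1})}$ for admissible sequences (the paper's \cref{lem: DT bounds for sum of lengths}); this does not follow from your auxiliary claim by itself, but it is a one-line induction of the same flavour, so state and prove it alongside the claim.
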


\cref{thm:dt-unique-adminseq} allows to define a numeration system for non-negative integers as defined below.

\begin{definition}
\label{def: DTNS for N}
Let $\mu \colon A^* \to A^*$ be a substitution,  $a$ be a growing letter,  and $u$ be a right-infinite fixed point of $\mu$ with seed $u_0=a$.
Set $\ttt{c}=\max_{b\in A}|\mu(b)|-1$ and define the set $D=\{\ttt{0},\ttt{1},\ldots,\ttt{c}\}$.
We define the map $\rep_{\mu,a} \colon \N \to D^*, n \mapsto \rep_{\mu,a} (n)$ by
\[
\rep_{\mu,a} (n) = 
\begin{cases}
\varepsilon,  & \text{if $n=0$}; \\
|m_{k-1}| \cdot |m_{k-2}| \cdots |m_0|,  & \text{if $n\ge 1$};
\end{cases}
\]
where $k=k(n)$ is the unique integer and $((m_i,a_i))_{i=0,\dots,k-1}$ is the unique sequence from~\cref{thm:dt-unique-adminseq}.
This numeration system is called the \emph{Dumont--Thomas numeration system associated with $\mu$ and $u$}.
\end{definition}

Note that we use a special font for the digits representing numbers to distinguish them from integers. Note also that $\rep_{\mu,a}(n)$ is the label of the shortest path from the root to a node in column $n$ in the tree $\mathcal{T}_{\mu,a}$.

\begin{example}
We resume~\cref{ex:Trib}.
The representation of the first few non-negative integers for the Tribonacci substitution $\tau$ and growing letter $a$ are given in~\cref{tab:DTNS-for-Trib-table}.
\begin{table}
\centering
\caption{The representation of the first few non-negative integers in the Dumont--Thomas numeration system for $\tau$ and $a$,  where $\tau\colon a\mapsto ab,  b\mapsto ac, c\mapsto a$ is the Tribonacci substitution.}
\label{tab:DTNS-for-Trib-table}
\begin{tabular}{c|ccccccc}
 		$n$ & $0$ & $1$ & $2$ & $3$ & $4$ & $5$ & $6$ \\
 		\hline 
 		$\rep_{\tau,a}(n)$& $\varepsilon$ & \ttt{1} & \ttt{10} & \ttt{11} & \ttt{100} & \ttt{101} & \ttt{110}
\end{tabular}
\end{table}
\end{example}

In~\cite{MR4836876},  the authors proposed an extension of the numeration system given in~\cref{def: DTNS for N} to all integers.
The next two results generalize~\cref{thm:dt-unique-adminseq} to right- and left-infinite periodic points of substitutions with a growing letter.

\begin{theorem}\label{thm: unique admin for right infinite periodic}
 {\rm\cite[Theorem 4.1]{MR4836876}}
Let $\mu:A^*\to A^*$ be a substitution with growing letter $a\in A$.
Consider a right-infinite periodic point $u\in \Per_{\N}(\mu)$ with $u_0=a$ and period $p\ge 1$.
For every integer $n\geq1$, there exist a unique integer $k=k(n)$ such that $p$ divides $k$ and a unique sequence $((m_i,a_i))_{i=0,\dots,k-1}$ such that the sequence is $a$-admissible,  $m_{k-1}m_{k-2} \cdots m_{k-p}\neq\varepsilon$,  and $ u_0u_1\cdots u_{n-1} = \mu^{k-1}(m_{k-1})\mu^{k-2}(m_{k-2}) \cdots \mu^0(m_0)$.
\end{theorem}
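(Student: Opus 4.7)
The plan is to reduce the statement to the already-established fixed-point version (\cref{thm:dt-unique-adminseq}) by passing to the $p$-th power $\nu := \mu^p$, for which $u$ is a fixed point. Since $a$ is growing for $\mu$, it remains growing for $\nu$, so \cref{thm:dt-unique-adminseq} applies: for every $n \ge 1$, there exist a unique $K = K(n)$ and a unique $a$-admissible-with-respect-to-$\nu$ sequence $((M_i,A_i))_{i=0}^{K-1}$ with $M_{K-1} \neq \varepsilon$ and $u_0 u_1 \cdots u_{n-1} = \nu^{K-1}(M_{K-1}) \nu^{K-2}(M_{K-2}) \cdots \nu^0(M_0)$. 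The remaining work is to refine each block $M_i$ into $p$ layers of $\mu$-admissibility, and then reverse this refinement to obtain uniqueness.

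For each $i \in \{0, \ldots, K-1\}$, the word $M_i A_i$ is a prefix of $\mu^p(A_{i+1})$ (setting $A_K := a$). I would invoke a \emph{prefix version} of \cref{thm:dt-unique-adminseq}, provable by induction on $p$ by peeling off one application of $\mu$ at a time: every prefix of $\mu^p(b)$ admits a unique $b$-admissible $\mu$-decomposition of length exactly $p$. Applied to $M_i A_i$, this yields a unique $A_{i+1}$-admissible sequence $((m_{ip+j}, a_{ip+j}))_{j=0}^{p-1}$ with $a_{ip} = A_i$ and $M_i = \mu^{p-1}(m_{ip+p-1}) \mu^{p-2}(m_{ip+p-2}) \cdots m_{ip}$. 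Concatenating these refinements over $i = 0, \ldots, K-1$ produces an $a$-admissible sequence $((m_i, a_i))_{i=0}^{k-1}$ of length $k := pK$, which is divisible by $p$, and whose evaluation recovers $u_0 u_1 \cdots u_{n-1}$. The nonvanishing condition $m_{k-1} m_{k-2} \cdots m_{k-p} \neq \varepsilon$ is then automatic, since otherwise $M_{K-1} = \mu^{p-1}(m_{k-1}) \mu^{p-2}(m_{k-2}) \cdots m_{k-p}$ would be empty, contradicting \cref{thm:dt-unique-adminseq}.

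For uniqueness, I would reverse the construction. Given any candidate $((m_i, a_i))_{i=0}^{k-1}$ with $p \mid k$, satisfying the displayed equation and $m_{k-1} m_{k-2} \cdots m_{k-p} \neq \varepsilon$, I would group consecutive $p$-tuples into blocks to obtain $((M_i, A_i))_{i=0}^{k/p-1}$, then check that this grouped sequence is $a$-admissible with respect to $\nu$ and satisfies $M_{k/p-1} \neq \varepsilon$ (because its $\mu$-preimage components are not all empty). The uniqueness clause of \cref{thm:dt-unique-adminseq} applied to $\nu$ then pins down $K = k/p$ and the pairs $(M_i, A_i)$, and the uniqueness of the $p$-level refinement pins down each pair $(m_i, a_i)$. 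The main obstacle is the auxiliary prefix-decomposition lemma together with the bookkeeping that verifies admissibility is preserved across block boundaries via the identifications $a_{ip} = A_i$ and $m_{(i+1)p-1} a_{(i+1)p-1}$ being a prefix of $\mu(A_{i+1})$; once those are established, the remaining steps amount to routine verification.
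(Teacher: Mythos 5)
Your proposal is correct, but it follows a genuinely different route from the one the paper takes (the paper quotes this statement from its source and proves the generalization \cref{thm: unique admin for right infinite periodic len r} by the same method). You reduce to the fixed-point theorem \cref{thm:dt-unique-adminseq} applied to $\nu=\mu^p$, for which $u$ is indeed a fixed point with growing seed $a$, and then refine each block $M_i$ (a proper prefix of $\mu^p(A_{i+1})$, since $M_iA_i$ is a prefix of $\nu(A_{i+1})$) into $p$ layers of $\mu$-admissibility; your auxiliary ``prefix version'' lemma is exactly \cref{lem: DT decomposition for proper prefix}, already available, and the identification $a_{ip}=A_i$ is automatic because the bottom letter of any admissible decomposition of a proper prefix is precisely the letter following that prefix, which makes the cross-boundary admissibility check work as you describe; the nonemptiness of $m_{k-1}\cdots m_{k-p}$ and of $M_{K-1}$ translate into each other because $\mu$ is non-erasing, and the regrouping argument correctly transfers uniqueness (uniqueness of $K$ and of the $(M_i,A_i)$ from \cref{thm:dt-unique-adminseq}, then uniqueness of each block refinement from \cref{lem: DT decomposition for proper prefix}). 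The paper instead argues directly: it pins down $k$ as the unique multiple of $p$ (more generally $k\equiv r\bmod p$) with $\len{\mu^{k-p}(a)}\le n<\len{\mu^{k}(a)}$, applies \cref{lem: DT decomposition for proper prefix} once at level $k$, rules out $m_{k-1}\cdots m_{k-p}=\varepsilon$ via the length bound \cref{lem: DT bounds for sum of lengths}, and handles uniqueness over other admissible lengths $k'$ by a length estimate (for $k'<k$) and by padding with empty pairs (for $k'>k$). Your reduction buys economy of means---it reuses the classical Dumont--Thomas theorem wholesale and avoids re-deriving the choice of $k$---at the price of block bookkeeping; the paper's direct argument has the advantage that it extends verbatim to arbitrary residues $r$ as in \cref{thm: unique admin for right infinite periodic len r}, where $k$ is no longer a multiple of $p$ and your grouping into blocks of size $p$ would need an extra step to handle the leftover $r$ levels.
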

    
\begin{theorem}\label{thm: unique admin for left infinite periodic}
     {\rm\cite[Theorem 4.2]{MR4836876}}
Let $\mu:A^*\to A^*$ be a substitution with growing letter $b\in A$.
Consider a left-infinite periodic point $u\in \Per_{\Z_{<0}}(\mu)$ with $u_{-1}=b$ and period $p\ge 1$.
For every integer $n\leq -2$, there exist a unique integer $k=k(n)$ such that $p$ divides $k$ and a unique sequence $((m_i,a_i))_{i=0,\dots,k-1}$ such that
\begin{enumerate}
\item the sequence $((m_i,a_i))_{i=0,\dots,k-1}$ is $b$-admissible,
\item we have $-|\mu^k(b)|\leq n$, 
\item we have $\mu^{p-1}(m_{k-1})\mu^{p-2}(m_{k-2}) \cdots \mu^0(m_{k-p}) a_{k-p} \neq \mu^p(b)$,
\item and we have $u_{-|\mu^k(b)|} \cdots u_{n-2} u_{n-1} = \mu^{k-1}(m_{k-1})\mu^{k-2}(m_{k-2}) \cdots \mu^0(m_0)$.
\end{enumerate}
\end{theorem}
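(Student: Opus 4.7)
The plan is to closely parallel the proof of~\cref{thm: unique admin for right infinite periodic}, with the roles of leftmost and rightmost descents in the tree $\mathcal{T}_{\mu,b}$ interchanged. Since $\mu^p(u)=u$ with $u_{-1}=b$, iterating yields $u_{-|\mu^K(b)|}\cdots u_{-1}=\mu^K(b)$ whenever $p$ divides $K$, so the factor $u_{-|\mu^K(b)|}\cdots u_{n-1}$ appearing in the statement is exactly the prefix of $\mu^K(b)$ of length $N_K:=n+|\mu^K(b)|$. The problem thus reduces to finding, for a suitable multiple $k$ of $p$, the unique path from the root to the node in column $N_k$ at level $k$ of $\mathcal{T}_{\mu,b}$.

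\textbf{Existence.} I would first pick any multiple $K$ of $p$ large enough that $N_K\ge 1$ (possible because $b$ is growing). The finite Dumont--Thomas path-uniqueness in $\mathcal{T}_{\mu,b}$ (the finite version of~\cref{thm:dt-unique-adminseq} from~\cite{Dumont-Thomas-1989}) then yields a unique $b$-admissible sequence $((m_i,a_i))_{i=0,\dots,K-1}$ whose images concatenate to the sought prefix. If the non-triviality condition holds, take $k:=K$. Otherwise, the equality $\mu^{p-1}(m_{K-1})\cdots m_{K-p}a_{K-p}=\mu^p(b)$ forces $a_{K-p}$ to equal the last letter of $\mu^p(b)$, namely $b$, and applying $\mu^{K-p}$ to the equality reveals that the tail $((m_i,a_i))_{i=0,\dots,K-p-1}$ is itself a $b$-admissible sequence whose images concatenate to the prefix of $\mu^{K-p}(b)$ of length $N_{K-p}$. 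Iterating this reduction strictly decreases $K$ and must terminate, and a short case check (using that $|\mu^p(b)|\ge 2$ since $b$ is growing) shows that the non-triviality condition necessarily holds at termination.

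\textbf{Uniqueness and main obstacle.} Suppose, for contradiction, that $k_1<k_2$ are two multiples of $p$ both admitting valid sequences. Setting $c_0:=b$ and inductively $c_i:=$ the last letter of $\mu(c_{i-1})$, the assumption $\mu^p(u)=u$ and $u_{-1}=b$ forces $c_p=b$. I would then prepend to the $k_1$-sequence a rightmost-descent block $a'_{k_1+i}:=c_{p-i}$, $m'_{k_1+i}:=\mu(c_{p-i-1})c_{p-i}^{-1}$ for $0\le i\le p-1$, producing a $b$-admissible sequence of length $k_1+p$ whose top $p$ images telescope to $\mu^p(b)$ via
\[
\mu^{p-1}\bigl(\mu(b)c_1^{-1}\bigr)\mu^{p-2}\bigl(\mu(c_1)c_2^{-1}\bigr)\cdots\bigl(\mu(c_{p-1})c_p^{-1}\bigr)c_p = \mu^p(b).
\]
Iterating this prepending $(k_2-k_1)/p$ times yields a $b$-admissible sequence of length $k_2$ representing the same factor of $u$. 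By path-uniqueness at level $k_2$ it must coincide with the valid $k_2$-sequence, yet by construction it violates the non-triviality condition at $k_2$, a contradiction; hence $k_1=k_2$, and uniqueness of the sequence itself follows from path-uniqueness at that fixed level. The main obstacle is verifying the displayed telescoping identity rigorously: each application of a right quotient $w\cdot a^{-1}$ must be justified by the fact that $w$ ends in $a$, and the cancellations must be carried out step by step. This morphism bookkeeping is the direct analog of the leftmost computation used in the proof of~\cref{thm: unique admin for right infinite periodic}, but it requires care because it is couched in terms of right suffixes of images rather than left prefixes.
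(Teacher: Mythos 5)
Your proof is correct, and its skeleton matches the paper's: the paper does not reprove this cited theorem itself but proves the generalization \cref{thm: unique admin for left infinite periodic len r} (which recovers it at $r=0$), and both that proof and yours reduce, via periodicity, to decomposing the proper prefix of $\mu^k(b)$ of length $n+|\mu^k(b)|$ using the unique-decomposition lemma (\cref{lem: DT decomposition for proper prefix} --- this, rather than a ``finite version'' of \cref{thm:dt-unique-adminseq}, is the tool you should cite), and both prove uniqueness by extending a shorter valid sequence along the rightmost branch of $\mathcal{T}_{\mu,b}$ and contradicting the non-equality condition through that same lemma. Where you genuinely differ is the existence step: the paper fixes $k$ a priori by the squeeze $-|\mu^k(b)|\le n<-|\mu^{k-p}(b)|$ and then rules out the forbidden equality by a length estimate, whereas you start from an arbitrary large multiple $K$ of $p$ and repeatedly crop the top $p$ pairs whenever the equality holds, a descending induction that terminates because the equality at level $p$ would force the represented prefix to have length $|\mu^p(b)|-1$ while $n\le -2$ caps it at $|\mu^p(b)|-2$; this cropping trick is exactly the one the paper uses later in \cref{lem: technical on c in Ej}, so your existence argument is a legitimate variant that trades the explicit length bracketing for a termination check. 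Two small points to tighten: justify $|\mu^p(b)|\ge 2$ by combining the growth of $b$ with the fact that $\mu^p(b)$ ends in $b$ (growth alone does not exclude $|\mu^p(b)|=1$), and in the telescoping identity make explicit that each right quotient is legal because $c_i$ is by definition the last letter of $\mu(c_{i-1})$; with these remarks your argument is complete.
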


Note that the second condition $-|\mu^k(b)|\leq n$ was added from \cite[Theorem 4.2]{MR4836876}. This does not compromise the result as this condition is already imposed in the proof, but it avoids a degenerate case where we select $k$ such that $-|\mu^k(b)|> n$ and the admissible sequence with $m_{k-1}=\cdots=m_0=\varepsilon$, leading to the equality $\varepsilon=\varepsilon$ and to the loss of uniqueness.
    
Similarly to~\cref{def: DTNS for N},  the combination of~\cref{thm: unique admin for right infinite periodic,thm: unique admin for left infinite periodic} allows to define a numeration system for all integers as defined below.

\begin{definition}
\label{def: DTNS for Z}
 {\rm\cite[Definition 4.3]{MR4836876}}
Let $\mu \colon A^* \to A^*$ be a substitution and let $u\in\Per_{\Z}(\mu)$ be a two-sided periodic point with growing seed $u_{-1}|u_0$ and period $p\ge 1$. 
Define $\ttt{c}=\max_{a\in A}|\mu(a)|-1$  and the set $D=\{\ttt{0},\ttt{1},\ldots,\ttt{c}\}$.
We define the map $\rep_{u} \colon \Z \to \{\ttt{0},\ttt{1}\}D^*, n \mapsto \rep_{u} (n)$ by
\[
\rep_{u} (n) = 
\begin{cases}
\ttt{0} \cdot |m_{k-1}| \cdot |m_{k-2}| \cdots |m_0|,  & \text{if $n\ge 1$};\\
\ttt{0},  & \text{if $n=0$}; \\
\ttt{1},  & \text{if $n=-1$}; \\
\ttt{1} \cdot |m_{k-1}| \cdot |m_{k-2}| \cdots |m_0|,  & \text{if $n\le -2$};
\end{cases}
\]
where if $n\ge 1$ (resp., $n\le -2$),  $k=k(n)$ is the unique integer and $((m_i,a_i))_{i=0,\dots,k-1}$ is the unique sequence obtained from~\cref{thm: unique admin for right infinite periodic} (resp., \cref{thm: unique admin for left infinite periodic}) applied on the right-infinite periodic point $u|_{\N}=u_0u_1\cdots$ (resp.,  the left-infinite periodic point $u|_{\Z_{<0}}=\cdots u_{-2}u_{-1}$) with period $p$.
This numeration system is called the \emph{Dumont--Thomas complement numeration system associated with $u$}.
\end{definition}

Observe that all representations of non-negative integers start with a letter $\ttt{0}$ while those of negative integers start with a letter $\ttt{1}$.
Again,  we use a special font for the digits in the representations of integers.
Also note that,  if we get rid of the initial letter,  the lengths of all representations are multiples of $p$. Using the tree interpretation, this generalization corresponds to using the tree $\mathcal{T}_{\mu,u_{-1}|u_0}$ to accommodate negative numbers, and using only paths of length one plus a multiple of $p$ to manage periodicity.

\begin{example}
Consider the substitution $\mu\colon a\mapsto abc, b\mapsto c,  c\mapsto ac$ and the two-sided periodic point $u\in\Per_{\Z}(\mu)$ with growing seed $c|a$.
(Note that the period of $u$ is $1$, so it is actually a fixed point.)
The corresponding tree $\mathcal{T}_{\mu,c|a}$ is depicted in~\cref{fig:DTNS-mu-u-tree} while the corresponding Dumont--Thomas numeration system is illustrated in~\cref{fig:DTNS-mu-u-table}.
For instance,  the paths $\ttt{02}$ and $\ttt{002}$ lead to the nodes in orange, which both project onto $n=2$ on the horizontal line below the tree.
To represent this integer $n=2$,  we need to take the path leading to the earliest level having a node projecting onto $n=2$,  so $\rep_u(2)=\ttt{02}$.

\begin{figure}
\centering
\subfloat[\label{fig:DTNS-mu-u-tree}]
{
\begin{tikzpicture}[xscale=.75, yscale=1.4, >=latex,
node0/.style={},
nodeM/.style={fill=cyan!50},
nodeA/.style={fill=orange!50}]
    \draw[thick,dashed] (-.5,-2.8) -- (-.5,.5);
    \node                 (O)   at (-.5,1.5) {};
    \node[rectangle,draw,nodeM] (S) at (-.5,1) {start};
    \node[rectangle,draw,node0] (0) at (0,0)  {$a$};
    \node[rectangle,draw,node0] (00) at (0,-1) {$a$};
    \node[rectangle,draw,node0] (01) at (1,-1) {$b$};
    \node[rectangle,draw,nodeA] (02) at (2,-1) {$c$};
    \node[rectangle,draw,node0] (000) at (0,-2) {$a$};
    \node[rectangle,draw,node0] (001) at (1,-2) {$b$};
    \node[rectangle,draw,nodeA] (002) at (2,-2) {$c$};
    \node[rectangle,draw,node0] (010) at (3,-2) {$c$};
    \node[rectangle,draw,node0] (020) at (4,-2) {$a$};
    \node[rectangle,draw,node0] (021) at (5,-2) {$c$};
    \node[rectangle,draw,node0] (1) at (-1,0)  {$c$};
    \node[rectangle,draw,node0] (10) at (-2,-1) {$a$};
    \node[rectangle,draw,node0] (11) at (-1,-1) {$c$};
    \node[rectangle,draw,node0] (100) at (-5,-2) {$a$};
    \node[rectangle,draw,node0] (101) at (-4,-2) {$b$};
    \node[rectangle,draw,node0] (102) at (-3,-2) {$c$};
    \node[rectangle,draw,node0] (110) at (-2,-2) {$a$};
    \node[rectangle,draw,node0] (111) at (-1,-2) {$c$};
    \node at (-5,-2.5) {$-5$};
    \node at (-4,-2.5) {$-4$};
    \node at (-3,-2.5) {$-3$};
    \node at (-2,-2.5) {$-2$};
    \node at (-1,-2.5) {$-1$};
    \node at (0,-2.5) {$0$};
    \node at (1,-2.5) {$1$};
    \node at (2,-2.5) {$2$};
    \node at (3,-2.5) {$3$};
    \node at (4,-2.5) {$4$};
    \node at (5,-2.5) {$5$};
    \draw[->] (O) to (S);
    \draw[->] (S)  -- node[fill=white,inner sep=2pt] {\scriptsize \ttt{0}} (0);
    \draw[->] (S)  -- node[fill=white,inner sep=2pt] {\scriptsize \ttt{1}} (1);
    \draw[->] (0)  -- node[fill=white,inner sep=2pt] {\scriptsize \ttt{0}} (00);
    \draw[->] (0)  -- node[fill=white,inner sep=2pt] {\scriptsize \ttt{1}} (01);
    \draw[->] (0)  -- node[fill=white,inner sep=2pt] {\scriptsize \ttt{2}} (02);
    \draw[->] (00) -- node[fill=white,inner sep=2pt] {\scriptsize \ttt{0}} (000);
    \draw[->] (00) -- node[fill=white,inner sep=2pt] {\scriptsize \ttt{1}} (001);
    \draw[->] (00) -- node[fill=white,inner sep=2pt] {\scriptsize \ttt{2}} (002);
    \draw[->] (01) -- node[fill=white,inner sep=2pt] {\scriptsize \ttt{0}} (010);
    \draw[->] (02) -- node[fill=white,inner sep=2pt] {\scriptsize \ttt{0}} (020);
    \draw[->] (02) -- node[fill=white,inner sep=2pt] {\scriptsize \ttt{1}} (021);
    \draw[->] (1)  -- node[fill=white,inner sep=2pt] {\scriptsize \ttt{0}} (10);
    \draw[->] (1)  -- node[fill=white,inner sep=2pt] {\scriptsize \ttt{1}} (11);
    \draw[->] (10) -- node[fill=white,inner sep=2pt] {\scriptsize \ttt{0}} (100);
    \draw[->] (10) -- node[fill=white,inner sep=2pt] {\scriptsize \ttt{1}} (101);
    \draw[->] (10) -- node[fill=white,inner sep=2pt] {\scriptsize \ttt{2}} (102);
    \draw[->] (11) -- node[fill=white,inner sep=2pt] {\scriptsize \ttt{0}} (110);
    \draw[->] (11) -- node[fill=white,inner sep=2pt] {\scriptsize \ttt{1}} (111);
\end{tikzpicture}
}
\hspace{1cm}
\subfloat[\label{fig:DTNS-mu-u-table}]
{
\begin{tabular}{|c|c|r|}
                        $n$ & path & $\rep_u(n)$\\ \hline
                        -5 & \ttt{100}         &  \ttt{100}           \\
                        -4 & \ttt{101}         &  \ttt{101}           \\
                        -3 & \ttt{102}         & \ttt{102}          \\
                        -2 & \ttt{110}         & \ttt{10}         \\
                        -1 & \ttt{111}        & \ttt{1}          \\
                        0  & \ttt{000}         & \ttt{0}          \\
                        1  & \ttt{001}         & \ttt{01}          \\
                        2  & \ttt{002}        & \ttt{02}           \\
                        3  & \ttt{010}         & \ttt{010}          \\
                        4  & \ttt{020}         & \ttt{020}         \\
                        5  & \ttt{021}       & \ttt{021}     \\
        \end{tabular}
 }
\caption{On the left,  the tree $\mathcal{T}_{\mu,c|a}$ associated with the two-sided periodic point $u$ of the substitution $\mu\colon a\mapsto abc, b\mapsto c,  c\mapsto ac$ with growing seed $c|a$.  On the right,  the representations of the first few integers in the corresponding Dumont--Thomas complement numeration system.}
\label{fig:DTNS-mu-u}
\end{figure}
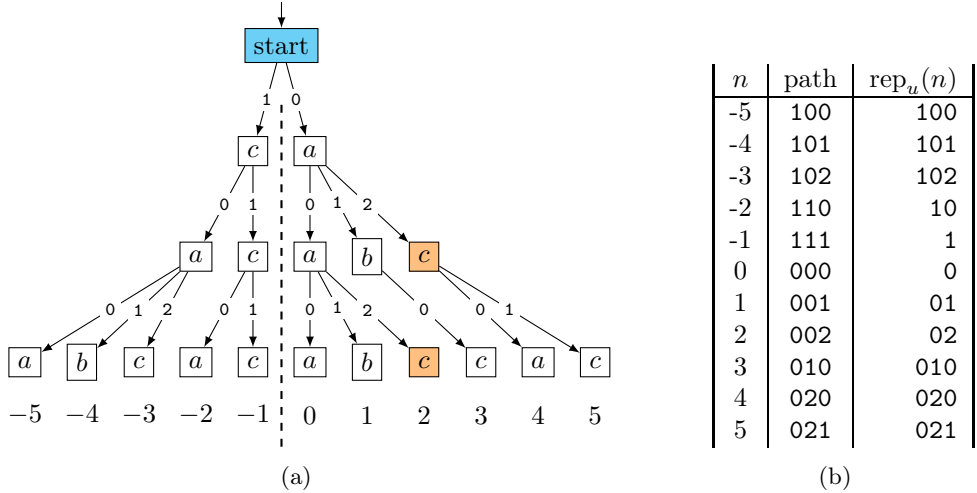
\end{example}

In the next example,  we consider two substitutions associated with the same real number.
The key observation is that one gives rise to a numeration system that is positional,  and the numeration system for the other is not.
This example is already present in Lep\v sov\'a's PhD thesis~\cite{Lepsova-2024}.

\begin{example}
 {\rm\cite[Example 6.5.6]{Lepsova-2024}}
 Consider the \emph{silver mean} $\beta=1+\sqrt{2}$.
 We define the substitutions $\mu\colon a \mapsto aab, b\mapsto a$ and $\rho\colon a\mapsto abb,  b\mapsto ab$.
 The characteristic polynomials of the corresponding adjacency matrices of $\mu$ and $\rho$ are equal and have dominant root $\beta$.
 We now consider the two-sided periodic points $u\in\Per_{\Z}(\mu)$ with growing seed $b|a$ and  period $2$ and $v\in\Per_{\Z}(\rho)$ with growing seed $b|a$ and period $1$.
 The representations of the few integers in the corresponding Dumont--Thomas numeration systems 
 are given in~\cref{tab:DTNS silver mean}.
 We observe that the first numeration system is positional while the second is not.
 Indeed,  if it were,  there would be an evaluation map $\val$ such that $\val(\rep_{v,0}(n))=n$ for all integers $n$.
 Since $\rep_{v,0}(3)=\ttt{010}$, we must have $U_1=3$,  but then $\val(\rep_{v,0}(5))=\val(\ttt{020})=6\neq 5$,  a contradiction.
 \begin{table}
 \centering
 \caption{The representations of the first few integers in the Dumont--Thomas numeration system associated with $u\in\Per_{\Z}(\mu)$ and $v\in\Per_{\Z}(\rho)$ and $r=0$ where  $\mu\colon a \mapsto aab, b\mapsto a$ and $\rho\colon a\mapsto abb,  b\mapsto ab$, both with seed $b|a$.}
        \label{tab:DTNS silver mean}
        \begin{center}
        \begin{tabular}{c|ccccc|cccccc}
 		$n$ & $-5$ & $-4$ & $-3$ & $-2$ & $-1$ & $0$ & $1$ & $2$ & $3$ & $4$ & $5$ \\
 		\hline
 		$\rep_u(n)$ & $\ttt{10112}$ & $\ttt{10120}$ & $\ttt{100}$ & $\ttt{101}$  & $\ttt{1}$ & $\ttt{0}$ & $\ttt{001}$ & $\ttt{002}$ & $\ttt{010}$ & $\ttt{011}$ & $\ttt{012}$ \\
 		\hline
 		$\rep_v(n)$ & $\ttt{100}$ & $\ttt{101}$ & $\ttt{102}$ & $\ttt{10}$  & $\ttt{1}$ & $\ttt{0}$ & $\ttt{01}$ & $\ttt{02}$ & $\ttt{010}$ & $\ttt{011}$ & $\ttt{020}$
        \end{tabular}
        \end{center}
       \end{table}
\end{example}

As suggested by the previous example,  Dumont--Thomas numeration systems for $\Z$ corresponding to periodic points of substitution are not always positional. 
Therefore,  Lep\v sov\'a raised the following natural question~\cite{Lepsova-2024}.

\begin{question}
\label{quest:DT case}
{\rm\cite[Question 6.5.7]{Lepsova-2024}}
What are the conditions for a complement Dumont--Thomas numeration system to be positional?
\end{question}

\subsection{Changing the lengths in the numeration language}
\label{sec:DT for N and Z with various residues}

As already observed,  \cref{thm: unique admin for right infinite periodic,thm: unique admin for left infinite periodic} both provide a numeration language whose words are of length multiple of the period $p$  of the considered infinite word. 
In this section,  we show that this is a superfluous restriction and we show that all lengths can be attained modulo $p$, a fact which is clear when considering the tree interpretation.
We start with right-infinite periodic points.

\begin{theorem}
\label{thm: unique admin for right infinite periodic len r}
Let $\mu:A^*\to A^*$ be a substitution with growing letter $a\in A$.
Consider a right-infinite periodic point $u\in \Per_{\N}(\mu)$ with $u_0=a$ and period $p\ge 1$.
Fix a residue $r\in\{0,1,\ldots,p-1\}$ modulo $p$ and define $v_r = \mu^r(u)$.
For every integer $n\geq 0$, there exist a unique integer $k=k(n)$ with $k\equiv r \bmod p$ and a unique sequence $((m_i,a_i))_{i=0,\dots,k-1}$ such that the sequence is $a$-admissible,  
\begin{equation}
\label{eq: non equality with epsilon}
m_{k-1}m_{k-2} \cdots m_{k-p}\neq\varepsilon \text{ if }k\geq p,  
\end{equation}
and $ (v_r)_{[0,n-1]} = \mu^{k-1}(m_{k-1})\mu^{k-2}(m_{k-2}) \cdots \mu^0(m_0)$.
\end{theorem}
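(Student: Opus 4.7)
\emph{Plan.} I would argue entirely with the tree $\mathcal{T}_{\mu,a}$, extending the reasoning behind~\cref{thm: unique admin for right infinite periodic} (which is the case $r=0$). The preliminary observation is a prefix property: for every $k \equiv r \pmod p$ with $k \geq r$, the word $\mu^k(a)$ is a prefix of $v_r$. Writing $k = r + jp$ with $j \geq 0$, the identity $\mu^{jp}(u) = u$ together with $u_0 = a$ gives that $\mu^{jp}(a)$ is a prefix of $u$; applying $\mu^r$ yields $\mu^k(a) = \mu^r(\mu^{jp}(a))$ as a prefix of $\mu^r(u) = v_r$. A special case used repeatedly is that $\mu^p(a)$ starts with $a$. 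For existence, fix $n \geq 0$. Since $a$ is growing, one may pick $k \equiv r \pmod p$ with $k \geq r$ and $|\mu^k(a)| \geq n+1$; the unique root-to-node path reaching the column-$n$ node at depth $k$ in $\mathcal{T}_{\mu,a}$ labels an $a$-admissible sequence $((m_i, a_i))_{i=0}^{k-1}$ satisfying $(v_r)_{[0,n-1]} = \mu^{k-1}(m_{k-1}) \cdots \mu^0(m_0)$. If $k \geq p$ and $m_{k-1} = \cdots = m_{k-p} = \varepsilon$, admissibility propagates leftmost descendancy: $a_{k-j}$ must be the first letter of $\mu^j(a)$, so $a_{k-p} = \mu^p(a)_0 = a$, and the truncation $((m_i, a_i))_{i=0}^{k-p-1}$ is still $a$-admissible and represents the same word. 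Iterating this reduction terminates either at $k < p$ (in which case~\eqref{eq: non equality with epsilon} is vacuous) or at a $k \geq p$ satisfying~\eqref{eq: non equality with epsilon}.

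\emph{Uniqueness.} For a fixed $k$, uniqueness of the sequence is immediate since paths in a tree are unique. For uniqueness of $k$, let $k_{\min}$ denote the value produced above; it is minimal among valid integers because, otherwise, the same truncation would contradict minimality. Suppose now $k' = k_{\min} + jp$ for some $j \geq 1$ is also valid. I would show that the unique length-$k'$ path to column $n$ in $\mathcal{T}_{\mu,a}$ is obtained by prepending $jp$ leftmost edges to the length-$k_{\min}$ path. Following leftmost edges for $p$ steps reaches the depth-$p$ column-$0$ node labeled $a$, and the subtree rooted there is canonically identified with $\mathcal{T}_{\mu,a}$ with matching column indices (because $\mu^i(a)$ is a prefix of $\mu^{p+i}(a)$ for every $i \geq 0$). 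Iterating $j$ times yields a valid depth-$k'$ path ending at column $n$, which by tree uniqueness must be \emph{the} path; its top $p$ blocks are all $\varepsilon$, contradicting~\eqref{eq: non equality with epsilon} for $k'$. Hence $k = k_{\min}$ is the unique valid integer.

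\emph{Main obstacle.} I expect the delicate step to be the column-preserving identification between the leftmost depth-$p$ subtree and the whole tree $\mathcal{T}_{\mu,a}$, which is the engine of the uniqueness argument. This identification reduces to the prefix identity ``$\mu^i(a)$ is a prefix of $\mu^{p+i}(a)$ for all $i \geq 0$'', itself obtained by applying $\mu^i$ to the decomposition $\mu^p(a) = a w$ supplied by the prefix property. Once this is in place, the entire proof is a careful bookkeeping of depths, columns, and admissibility, adapted from the residue $0$ to a general residue $r$.
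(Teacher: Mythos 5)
Your proof is correct in substance, and its backbone is recognizably the same as the paper's: both arguments rest on the unique decomposition of a proper prefix of $\mu^k(a)$ at each fixed length (the paper invokes \cref{lem: DT decomposition for proper prefix}; your ``paths in a tree are unique'' is exactly that fact) and on adding or removing blocks $(\varepsilon,\mu^j(a)_0)$ of leftmost edges, using $\mu^p(a)_0=a$. Where you genuinely diverge is in the bookkeeping. The paper pins down $k$ at the outset by the squeeze $\len{\mu^{k-p}(a)}\le n<\len{\mu^{k}(a)}$ and then uses \cref{lem: DT bounds for sum of lengths} twice: once to force \cref{eq: non equality with epsilon}, and once to exclude smaller lengths $k'\le k-p$; larger lengths are excluded by prepending $\varepsilon$-blocks. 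You instead start from an arbitrary sufficiently deep level and truncate by $p$ while the top $p$ blocks are empty, and you exclude larger lengths via the column-preserving identification of the leftmost depth-$p$ subtree with $\mathcal{T}_{\mu,a}$ (your prefix property $\mu^i(a)$ is a prefix of $\mu^{p+i}(a)$), which is the paper's prepending construction phrased on the tree. Your route avoids the length-bound lemma altogether, at the price of the termination/consistency discussion of the truncation, which you handle correctly (including that $a_{k-p}=a$ so the cropped sequence stays $a$-admissible and represents the same word).

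The one step you must spell out is that no valid $k'<k_{\min}$ exists: as written, ``it is minimal among valid integers because, otherwise, the same truncation would contradict minimality'' is circular. The repair is the mirror image of your own argument for $k'>k_{\min}$: if some sequence of length $k'<k_{\min}$ with $k'\equiv r\bmod p$ satisfied the statement, prepend $(k_{\min}-k')/p$ rounds of leftmost $\varepsilon$-blocks (legitimate since $\mu^{k_{\min}-k'}(a)_0=a$); by per-length uniqueness the result is precisely the depth-$k_{\min}$ path produced by your construction, yet it has $m_{k_{\min}-1}=\cdots=m_{k_{\min}-p}=\varepsilon$, contradicting the fact that your truncation terminated at $k_{\min}$, i.e.\ that this path satisfies \cref{eq: non equality with epsilon} (note $k_{\min}\ge k'+p\ge p$). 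With that sentence replaced by this argument -- or by the paper's length-bound argument via \cref{lem: DT bounds for sum of lengths} -- the proof is complete.
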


The proof of~\cref{thm: unique admin for right infinite periodic len r} mimics that of~\cref{thm: unique admin for right infinite periodic} but we choose to present it for the sake of completeness.
Some intermediate results are necessary,  which we recall too.

\begin{lemma}
 {\rm\cite[Lemma 1.1]{Dumont-Thomas-1989}}
 \label{lem: DT bounds for sum of lengths}
 Let $\mu\colon A^* \to A^*$ be a substitution and let $k\ge 0$ be an integer.
 If $((m_i,a_i))_{i=0,\dots,k}$ in an admissible sequence,  then $\sum_{i=0}^k \len{\mu^i(m_i)} < \len{\mu^k(m_ka_k)}$.
 \end{lemma}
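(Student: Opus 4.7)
The plan is to prove the inequality by induction on $k$. The base case $k=0$ is immediate: any admissible sequence $((m_0,a_0))$ satisfies $|\mu^0(m_0)| = |m_0| < |m_0|+1 = |m_0 a_0| = |\mu^0(m_0 a_0)|$, so the strict inequality holds simply because the single letter $a_0$ contributes one unit to the right-hand side but nothing to the left.

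For the inductive step, suppose the inequality holds for admissible sequences of length $k$. Let $((m_i,a_i))_{i=0,\dots,k}$ be admissible. The key observation is that the truncated sequence $((m_i,a_i))_{i=0,\dots,k-1}$ is itself admissible (the defining condition involves only consecutive pairs), so the induction hypothesis yields
\[
\sum_{i=0}^{k-1} \len{\mu^i(m_i)} < \len{\mu^{k-1}(m_{k-1} a_{k-1})}.
\]
Next I exploit admissibility at the top index: since $m_{k-1} a_{k-1}$ is a prefix of $\mu(a_k)$, applying $\mu^{k-1}$ and using the fact that the length is additive under concatenation gives $\len{\mu^{k-1}(m_{k-1} a_{k-1})} \le \len{\mu^{k-1}(\mu(a_k))} = \len{\mu^k(a_k)}$. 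Chaining the two estimates yields $\sum_{i=0}^{k-1} \len{\mu^i(m_i)} < \len{\mu^k(a_k)}$, and adding the nonnegative quantity $\len{\mu^k(m_k)}$ to both sides finishes the step:
\[
\sum_{i=0}^{k} \len{\mu^i(m_i)} < \len{\mu^k(m_k)} + \len{\mu^k(a_k)} = \len{\mu^k(m_k a_k)}.
\]

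There is no real obstacle here; the only subtlety worth flagging is that the strict inequality is preserved throughout because the base case already contributes the gap of exactly one letter (the trailing $a_0$), and every subsequent step uses only non-strict bounds. In particular, I rely implicitly on $\mu$ being length-nondecreasing on the relevant words, which holds because $\mu(b)$ is nonempty for every letter $b \in A$ (part of the definition of a substitution).
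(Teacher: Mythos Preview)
Your proof is correct. The induction on $k$ is the natural argument: the base case is the trivial inequality $|m_0|<|m_0a_0|$, and in the step you correctly use that the truncated sequence remains admissible, that the prefix relation $m_{k-1}a_{k-1}\preceq\mu(a_k)$ passes to a length inequality after applying $\mu^{k-1}$ (morphisms are length-monotone on prefixes since images of letters are nonempty), and then add $|\mu^k(m_k)|$ to both sides.

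The paper itself does not supply a proof of this lemma; it is quoted as \cite[Lemma~1.1]{Dumont-Thomas-1989} and used as a black box in the proofs of \cref{thm: unique admin for right infinite periodic len r,thm: unique admin for left infinite periodic len r}. Your argument is the standard one and matches what one finds in the original Dumont--Thomas paper.
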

 
 \begin{lemma}
 {\rm\cite[Lemma 3.9]{MR4836876}}
 \label{lem: DT decomposition for proper prefix}
 Let $\mu\colon A^* \to A^*$ be a substitution and let $k\ge 0$ be an integer.
 Let also $m\in A^*$ and $a\in A$ be such that $m$ is a proper prefix of $\mu^k(a)$.
 Then there exists a unique $a$-admissible sequence $((m_i,a_i))_{i=0,\dots,k-1}$ such that $m = \mu^{k-1}(m_{k-1})\mu^{k-2}(m_{k-2})\cdots \mu^0(m_0)$.
 \end{lemma}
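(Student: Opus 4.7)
The strategy mimics the one of \cref{thm: unique admin for right infinite periodic} (the special case $r=0$): obtain a decomposition at a sufficiently high level $K$ with $K\equiv r\bmod p$ via the non-periodic \cref{lem: DT decomposition for proper prefix}, then iteratively truncate blocks of $p$ indices from the top whenever the topmost $p$ values of $m_i$ all vanish, exploiting that $\mu^p(a)$ begins with $a$.

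\textbf{Step 1 (a large decomposition).} Since $\mu^p(u)=u$ and $u_0=a$, the letter $a$ is a prefix of $\mu^p(a)$, so by iteration $\mu^{jp}(a)$ is a prefix of $u$ for every $j\ge 0$; applying $\mu^r$, the word $\mu^{r+jp}(a)$ is a prefix of $v_r=\mu^r(u)$. Using that $a$ is growing, pick $J$ large enough that $n<|\mu^{r+Jp}(a)|$ and set $K=r+Jp$. Then $(v_r)_{[0,n-1]}$ is a proper prefix of $\mu^K(a)$, and \cref{lem: DT decomposition for proper prefix} yields a unique $a$-admissible sequence $((m_i,a_i))_{i=0,\dots,K-1}$ with $(v_r)_{[0,n-1]}=\mu^{K-1}(m_{K-1})\cdots\mu^0(m_0)$.

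\textbf{Step 2 (existence of $k$).} If $K=r$, set $k=K$ and we are done. Otherwise $K\ge r+p\ge p$; if $m_{K-1}=\dots=m_{K-p}=\varepsilon$, the $a$-admissibility conditions at the top force, inductively, $a_{K-j}$ to be the first letter of $\mu^j(a)$ for $j=1,\dots,p$. Indeed, $a_{K-1}$ is the first letter of $\mu(a)$ because $m_{K-1}a_{K-1}$ is a prefix of $\mu(a)$, and then $a_{K-j-1}$ is the first letter of $\mu(a_{K-j})$, which is the first letter of $\mu^{j+1}(a)$. In particular $a_{K-p}=a$, so $m_{K-p-1}a_{K-p-1}$ is a prefix of $\mu(a_{K-p})=\mu(a)$ and the truncated sequence $((m_i,a_i))_{i=0,\dots,K-p-1}$ is still $a$-admissible and decomposes $(v_r)_{[0,n-1]}$. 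Iterate this truncation until either $k=r<p$, or $k\ge p$ with $m_{k-1}\cdots m_{k-p}\neq\varepsilon$.

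\textbf{Step 3 (uniqueness).} Suppose two admissible decompositions of $(v_r)_{[0,n-1]}$ satisfy the stated conditions, of lengths $k_1\le k_2$, both $\equiv r\bmod p$. Pad the $k_1$-sequence up to length $k_2$ by trivial tuples $(\varepsilon,c)$, where the new letters $c$ are taken cyclically as the first letters of $\mu^{p-1}(a),\mu^{p-2}(a),\dots,\mu(a),\mu^p(a)=a,\mu^{p-1}(a),\dots$; the identity that $\mu^p(a)$ starts with $a$ guarantees $a$-admissibility across every new index, and the decomposition of $(v_r)_{[0,n-1]}$ is unaffected. By \cref{lem: DT bounds for sum of lengths} applied to the original $k_2$-sequence, $n<|\mu^{k_2}(a)|$, so $(v_r)_{[0,n-1]}$ is a proper prefix of $\mu^{k_2}(a)$, and uniqueness in \cref{lem: DT decomposition for proper prefix} forces the padded extension to coincide with the original $k_2$-decomposition. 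If $k_1<k_2$, then $k_2\ge p$ and $m_{k_2-1}=\dots=m_{k_2-p}=\varepsilon$ in the padded extension, contradicting the non-triviality at level $k_2$. Hence $k_1=k_2$, and the same uniqueness yields equality of the two sequences.

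\textbf{Main obstacle.} The subtle point in both the truncation (Step~2) and the padding (Step~3) is preserving $a$-admissibility across the interface with the all-$\varepsilon$ block, which relies on the identification $a_{K-p}=a$ (respectively $a_{k_1}=a$). Both rest on the single periodicity fact that $\mu^p(a)$ starts with $a$; once this is available, the problem reduces to the non-periodic case handled by \cref{lem: DT decomposition for proper prefix}, with \cref{lem: DT bounds for sum of lengths} providing the auxiliary length bound needed to invoke proper-prefix uniqueness.
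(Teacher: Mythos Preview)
Your proposal does not address the stated lemma at all: the statement asks you to show that any proper prefix $m$ of $\mu^k(a)$ admits a unique $a$-admissible decomposition of \emph{prescribed} length $k$, with no periodic point, no period $p$, and no residue $r$ in sight. What you have written is a proof of \cref{thm: unique admin for right infinite periodic len r} (the existence of a decomposition of length $\equiv r\bmod p$ for a prefix of $v_r$), and in doing so you repeatedly \emph{invoke} \cref{lem: DT decomposition for proper prefix} itself (in Step~1 and Step~3). As a proof of the lemma this is circular; as a proof of \cref{thm: unique admin for right infinite periodic len r} it is essentially correct and close to the paper's argument, but that is not the task.

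For the lemma itself (which the paper imports from \cite{MR4836876} without proof), the natural approach is a direct induction on $k$: since $m$ is a proper prefix of $\mu^k(a)=\mu^{k-1}(\mu(a))$, there is a unique way to write $\mu(a)=m_{k-1}a_{k-1}s$ with $\mu^{k-1}(m_{k-1})$ a prefix of $m$ and the remainder $m'=\mu^{k-1}(m_{k-1})^{-1}m$ a proper prefix of $\mu^{k-1}(a_{k-1})$; then apply the induction hypothesis to $m'$ and $a_{k-1}$. Uniqueness at each step is forced because the blocks $\mu^{k-1}(c)$ for the successive letters $c$ of $\mu(a)$ tile $\mu^k(a)$, so the pair $(m_{k-1},a_{k-1})$ is determined by where $|m|$ falls among the partial sums of their lengths.
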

 
\begin{proof}[Proof of~\cref{thm: unique admin for right infinite periodic len r}]
Since $a$ is a growing letter,  the sequence $(\len{\mu^{\ell p+r}(a)})_{\ell\ge 0}$ is (strictly) increasing.
Note that,  for every $r\in\{0,1,\ldots,p-1\}$,  the word $v_r$ is also a periodic point of $\mu$ with period $p$.
Let $n\ge 0$ be an integer. Setting $\len{\mu^{-p+r}(a)}=0$ by convention,
there exists a unique integer $\ell=\ell(n)\geq 0$ such that $\len{ \mu^{(\ell-1)p+r}(a)} \le n < \len{ \mu^{\ell p+r}(a)}$.
Let $k=k(n)=\ell p + r$ be such that
\begin{equation}\label{eq: squeeze n between two consec lengths}
\len{ \mu^{k-p}(a)} \le n < \len{ \mu^k(a)}.
\end{equation}
Now the word $m = v_{r,0}v_{r,1} \cdots v_{r,n-1}$ of length $n$ is a proper prefix of $\mu^k(a)$ (for a visualization,  recall that the $k$-th level of the tree $\mathcal{T}_{\mu,a}$ is a prefix of $v_r$ since $k\equiv r\bmod p$).
By~\cref{lem: DT decomposition for proper prefix},  there exists a unique $a$-admissible sequence $((m_i,a_i))_{i=0,\dots,k-1}$ such that $m = \mu^{k-1}(m_{k-1})\cdots \mu^0(m_0)$.
It remains to show that $m_{k-1}m_{k-2} \cdots m_{k-p}\neq\varepsilon$ if $k\geq p$.
Towards a contradiction,  we assume that $k\geq p$ and $m_{k-1}m_{k-2} \cdots m_{k-p}=\varepsilon$.
By inspecting the definition of admissible sequences with periodic points,  it means that $a_{k-p}=a$.
Then~\cref{lem: DT bounds for sum of lengths} implies that
\begin{align*}
n 
&= \len{m} = \sum_{j=0}^{k-1} \len{ \mu^j(m_j)} = \sum_{j=0}^{k-p-1} \len{ \mu^j(m_j)} \\
&< \len{ \mu^{k-p-1}(m_{k-p-1}a_{k-p-1})} \le \len{ \mu^{k-p-1}(\mu(a_{k-p}))} = \len{\mu^{k-p}(a)},
\end{align*}
which contradicts~\cref{eq: squeeze n between two consec lengths}.  Consequently,  $m_{k-1}m_{k-2} \cdots m_{k-p}\neq \varepsilon$,  which finishes the proof of existence. 

For uniqueness, we already know that the sequence $((m_i,a_i))_{i=0,\ldots,k-1}$ constructed above is the only $a$-admissible sequence of length $k$ that satisfies the conditions of the statement. It remains to show that other lengths do not lead to more such sequences. First, suppose that $((m'_i,a'_i))_{i=0,\ldots,k'-1}$ is an $a$-admissible sequence of length $k'$ with $k'<k$ that respects the conditions of the statement. Since $k'\equiv r\bmod p$, we have $k'\leq k-p$. From~\cref{lem: DT bounds for sum of lengths}, we deduce that 
\[\len{\mu^{k'-1}(m'_{k'-1})\cdots \mu^0(m'_0)}<\len{\mu^{k'-1}(m'_{k'-1}a'_{k'-1})}\leq \len{\mu^{k'}(a)}\leq \len{\mu^{k-p}(a)}\leq n= \len{m}.\]
Thus it must be that $\mu^{k'-1}(m'_{k'-1})\cdots \mu^0(m'_0)\neq m$,  which violates the last condition of statement.
Now for $k'>k$, we note that the sequence obtained by prefixing 
\[
(m_{k'-1},a_{k'-1})=(\varepsilon,\mu(a)_0),
(m_{k'-2},a_{k'-2})=(\varepsilon,\mu^2(a)_0),\ldots,
(m_k,a_k)=(\varepsilon,\mu^{k'-k}(a)_0)
\] 
to the sequence $((m_i,a_i))_{i=0,\ldots,k-1}$ produces an $a$-admissible sequence of length $k'$ which is such that $m=\mu^{k'-1}(m_{k'-1})\cdots \mu^0(m_0)$. By~\cref{lem: DT decomposition for proper prefix}, this is the only such sequence of length $k'$. But this sequence starts with $m_{k'-1}=\cdots=m_{k'-p}=\varepsilon$, and thus it does not satisfy the statement. Thus the uniqueness is shown.
\end{proof}

We now move to left-infinite periodic points.

\begin{theorem}
\label{thm: unique admin for left infinite periodic len r}
Let $\mu:A^*\to A^*$ be a substitution with growing letter $b\in A$.
Consider a left-infinite periodic point $u\in \Per_{\Z_{<0}}(\mu)$ with $u_{-1}=b$ and period $p\ge 1$.
Fix a residue $r\in\{0,1,\ldots,p-1\}$ modulo $p$ and define $v_r = \mu^r(u)$.
For every integer $n\leq -1$, there exist a unique integer $k=k(n)$ with $k\equiv r \bmod p$ and a unique sequence $((m_i,a_i))_{i=0,\dots,k-1}$ such that 
\begin{enumerate}
\item the sequence $((m_i,a_i))_{i=0,\dots,k-1}$ is $b$-admissible,  
\item we have $-|\mu^k(b)|\leq n$, 
\item we have
\begin{equation}
\label{eq: non equality with mupa}
\mu^{p-1}(m_{k-1})\mu^{p-2}(m_{k-2}) \cdots \mu^0(m_{k-p}) a_{k-p} \neq \mu^p(b) \text{ if }k\geq p,
\end{equation}
\item and we have $(v_r)_{[-|\mu^k(b)|, n-1]} = \mu^{k-1}(m_{k-1})\mu^{k-2}(m_{k-2}) \cdots \mu^0(m_0)$.
\end{enumerate}
\end{theorem}

\begin{proof}
Note that,  for every $r\in\{0,1,\ldots,p-1\}$,  the word $v_r$ is also a periodic point of $\mu$ with period $p$.
Since $b$ is a growing letter,  the sequence $(-\len{\mu^{\ell p+r}(b)})_{\ell\ge 0}$ is (strictly) decreasing.
Let $n\le -1$ be an integer.
Setting $\len{\mu^{-p+r}(b)}=0$ by convention, there exists a unique integer $\ell=\ell(n)$ such that $- \len{ \mu^{\ell p+r}(b)}\le n < -\len{ \mu^{(\ell-1)p+r}(b)}$.
Let $k=k(n)=\ell p + r$ be such that
\begin{equation}\label{eq: squeeze n between two consec lengths left case}
-\len{ \mu^k(b)} \le n < - \len{ \mu^{k-p}(b)}.
\end{equation}
Now the word $m = v_{r,-\len{\mu^k(b)}} \cdots v_{r,n-2}v_{r,n-1}$ of length 
\begin{equation}\label{eq: length of prefix of vr}
\len{m} =  \len{\mu^k(b)} + n <  \len{\mu^k(b)}- \len{ \mu^{k-p}(b)} \le  \len{\mu^k(b)}
\end{equation}
is a proper prefix of $\mu^k(b)$.
Using~\cref{lem: DT decomposition for proper prefix},  there exists a unique $b$-admissible sequence $((m_i,a_i))_{i=0,\dots,k-1}$ such that $m = \mu^{k-1}(m_{k-1})\mu^{k-2}(m_{k-2})\cdots \mu^0(m_0)$.
We now show that~\cref{eq: non equality with mupa} holds.
Towards a contradiction,  we assume that~\cref{eq: non equality with mupa} is actually an equality.
Then we have $a_{k-p}=b$.
Writing
\[
m = \mu^{k-p}(\mu^{p-1}(m_{k-1}) \cdots \mu^0(m_{k-p}) ) \mu^{k-p-1}(m_{k-p-1}) \cdots \mu^0(m_0),
\]
we obtain from~\cref{eq: non equality with mupa} that
\[
\len{m} 
= \len{\mu^{k-p}(\mu^p(b))} - \len{\mu^{k-p}(a_{k-p})} +  \sum_{j=0}^{k-p-1} \len{ \mu^j(m_j)} 
\ge \len{\mu^k(b)} - \len{\mu^{k-p}(b)},
\]
which contradicts~\cref{eq: length of prefix of vr}.  Consequently,   \cref{eq: non equality with mupa} is a non-equality,  which finishes the proof of existence.

For uniqueness, we already know that the sequence $((m_i,a_i))_{i=0,\ldots,k-1}$ constructed above is the only $b$-admissible sequence of length $k$ that satisfies the conditions of the statement. It remains to show that other lengths do not lead to more such sequences. 
For $k'<k$, we clearly do not have the second condition $-|\mu^k(b)|\leq n$ of the statement. On the other hand, 
for $k'>k$, we note that when $k\equiv r \bmod p$, prefixing 
\begin{align*}
(m_{k'-1},a_{k'-1})&=(\mu(b)_{[0,\len{\mu(b)}-1)},\mu(b)_{\len{\mu(b)}-1}),\\
(m_{k'-2},a_{k'-2})&=(\mu(a_{k'-1})_{[0,\len{\mu(a_{k'-1})}-1)},\mu(a_{k'-1})_{\len{\mu(a_{k'-1})}-1}),\\
&\vdots\\
(m_{k},a_{k})&=(\mu(a_{k+1})_{[0,\len{\mu(a_{k+1})}-1)},\mu(a_{k+1})_{\len{\mu(a_{k+1})}-1})
\end{align*}
to the sequence $((m_i,a_i))_{i=0,\ldots,k-1}$ produces a $b$-admissible sequence of length $k'$ which is such that $v_{[-\len{\mu^{k'}(b)},n-1]}=\mu^{k'-1}(m_{k'-1})\cdots \mu^0(m_0)$.
By~\cref{lem: DT decomposition for proper prefix}, this is the only such sequence of length $k'$. But this sequence violates~\cref{eq: non equality with mupa}.
%
\end{proof}

From these results,  we generalize the definition of Dumont--Thomas numeration systems~\cite{Dumont-Thomas-1989,MR4836876}.

\begin{definition}
\label{def: DTNS for Z equiv r mod p}
Let $\mu \colon A^* \to A^*$ be a substitution and let $u\in\Per_{\Z}(\mu)$ be a two-sided periodic point with growing seed $u_{-1}|u_0$ and period $p\ge 1$. 
Let $r\in\{0,1,\ldots,p-1\}$ be a residue modulo $p$.
Define $\ttt{c}=\max_{a\in A}|\mu(a)|-1$  and the set $D=\{\mtt{0},\mtt{1},\ldots,\ttt{c}\}$.
We define the map $\rep_{u,r} \colon \Z \to \{\mtt{0},\mtt{1}\}D^*, n \mapsto \rep_{u,r} (n)$ by
\[
\rep_{u,r} (n) = 
\begin{cases}
\mtt{0} \cdot |m_{k-1}| \cdot |m_{k-2}| \cdots |m_0|,  & \text{if $n\ge 0$};\\
\mtt{1} \cdot |m_{k-1}| \cdot |m_{k-2}| \cdots |m_0|,  & \text{if $n\le -1$};
\end{cases}
\]
where $k=k(n)$ is the unique integer congruent to $r$ modulo $p$ and $((m_i,a_i))_{i=0,\dots,k-1}$ is the unique sequence obtained from~\cref{thm: unique admin for right infinite periodic len r} (resp., \cref{thm: unique admin for left infinite periodic len r}) applied on the right-infinite periodic point $u|_{\N}=u_0u_1\cdots$ (resp.,  the left-infinite periodic point $u|_{\Z_{<0}}=\cdots u_{-2}u_{-1}$) with period $p$.

This numeration system is called the \emph{Dumont--Thomas complement numeration system associated with $\mu$, $u$ and $r$}.
When the context is clear,  we drop the dependence on $\mu$, $u$ and $r$.
\end{definition}

Note that the lengths of the representations in the previous numeration system are congruent to $r+1$ modulo $p$.
Observe that all representations of non-negative integers start with a letter $\ttt{0}$ while those of negative integers start with a letter $\ttt{1}$.
Note that we again use a special font for the digits representing numbers to distinguish them from integers.

\begin{example}
\label{ex: phi and phi-squared}
Consider the substitution $\mu \colon a\mapsto ccd,  b \mapsto cd,  c \mapsto ab,  d\mapsto a$.
Take the periodic point $u\in\Per_{\N}(\mu)$ with growing seed $a|a$ and period $p=2$.
The tree $\mathcal{T}_{\mu,a|a}$ is depicted on~\cref{fig: tree phi and phi-squared}.
Now depending on whether we want representations of even or odd lengths,  we obtain different numeration systems as illustrated on the table in~\cref{fig: tree phi and phi-squared}.
\begin{figure}[h]
\centering
\subfloat
{
\begin{tikzpicture}[xscale=.75, yscale=1.4, >=latex,
node0/.style={},
nodeM/.style={fill=cyan!50},
nodeA/.style={fill=orange!50}]
\node[rectangle,draw,nodeM] (S) at (-.5,1) {start};
    \node[rectangle,draw,node0] (0) at (0,0)  {$a$};
    \node[rectangle,draw,node0] (00) at (0,-1) {$c$};
    \node[rectangle,draw,node0] (01) at (1,-1) {$c$};
    \node[rectangle,draw,node0] (02) at (2,-1) {$d$};
    \node[rectangle,draw,node0] (000) at (0,-2) {$a$};
    \node[rectangle,draw,node0] (001) at (1,-2) {$b$};
    \node[rectangle,draw,node0] (010) at (2,-2) {$a$};
    \node[rectangle,draw,node0] (011) at (3,-2) {$b$};
    \node[rectangle,draw,node0] (020) at (4,-2) {$a$};
    \node[rectangle,draw,node0] (0000) at (0,-3) {$c$};
    \node[rectangle,draw,node0] (0001) at (1,-3) {$c$};
    \node[rectangle,draw,node0] (0002) at (2,-3) {$d$};
    \node[rectangle,draw,node0] (0010) at (3,-3) {$c$};
    \node[rectangle,draw,node0] (0011) at (4,-3) {$d$};
    \node[rectangle,draw,node0] (0100) at (5,-3) {$c$};
    \node[rectangle,draw,node0] (0101) at (6,-3) {$c$};
    \node[rectangle,draw,node0] (0102) at (7,-3) {$d$};
    \node[rectangle,draw,node0] (0110) at (8,-3) {$c$};
    \node[node0] (0111) at (9,-3) {$\cdots$};    
    \node[] (zer) at (0,-3.5)  {$0$};
    \node[] (un) at (1,-3.5)  {$1$};
    \node[] (2b) at (2,-3.5)  {$2$};
    \node[] (3) at (3,-3.5)  {$3$};
    \node[] (4) at (4,-3.5)  {$4$};
    \node[] (5) at (5,-3.5)  {$5$};
    \node[] (6) at (6,-3.5)  {$6$};
    \node[] (7) at (7,-3.5)  {$7$};
    \node[] (8) at (8,-3.5)  {$8$};
    \node[rectangle,draw,node0] (1) at (-1,0)  {$a$};
    \node[rectangle,draw,node0] (10) at (-3,-1)  {$c$};
    \node[rectangle,draw,node0] (11) at (-2,-1)  {$c$};
    \node[rectangle,draw,node0] (12) at (-1,-1)  {$d$};
    \node[rectangle,draw,node0] (100) at (-5,-2)  {$a$};
    \node[rectangle,draw,node0] (101) at (-4,-2)  {$b$};
    \node[rectangle,draw,node0] (110) at (-3,-2)  {$a$};
    \node[rectangle,draw,node0] (111) at (-2,-2)  {$b$};
    \node[rectangle,draw,node0] (120) at (-1,-2)  {$a$};
    \node[rectangle,draw,node0] (1110) at (-5,-3)  {$c$};
    \node[rectangle,draw,node0] (1111) at (-4,-3)  {$d$};
    \node[rectangle,draw,node0] (1200) at (-3,-3)  {$c$};
    \node[rectangle,draw,node0] (1201) at (-2,-3)  {$c$};
    \node[rectangle,draw,node0] (1202) at (-1,-3)  {$d$};
    \node[node0] (mcdots) at (-6,-3) {$\cdots$};    
    \node[] (m1) at (-1,-3.5)  {$-1$};
    \node[] (m2) at (-2,-3.5)  {$-2$};
    \node[] (m3) at (-3,-3.5)  {$-3$};
    \node[] (m4) at (-4,-3.5)  {$-4$};
    \node[] (m5) at (-5,-3.5)  {$-5$};
    \draw[->] (S)  -- node[fill=white,inner sep=2pt] {\scriptsize \ttt{0}} (0);
    \draw[->] (0)  -- node[fill=white,inner sep=2pt] {\scriptsize \ttt{0}} (00);
    \draw[->] (0)  -- node[fill=white,inner sep=2pt] {\scriptsize \ttt{1}} (01);
    \draw[->] (0)  -- node[fill=white,inner sep=2pt] {\scriptsize \ttt{2}} (02);
    \draw[->] (00) -- node[fill=white,inner sep=2pt] {\scriptsize \ttt{0}} (000);
    \draw[->] (00) -- node[fill=white,inner sep=2pt] {\scriptsize \ttt{1}} (001);
    \draw[->] (01) -- node[fill=white,inner sep=2pt] {\scriptsize \ttt{0}} (010);
    \draw[->] (01) -- node[fill=white,inner sep=2pt] {\scriptsize \ttt{1}} (011);
    \draw[->] (02) -- node[fill=white,inner sep=2pt] {\scriptsize \ttt{0}} (020);
     \draw[->] (000) -- node[fill=white,inner sep=2pt] {\scriptsize \ttt{0}} (0000);
     \draw[->] (000) -- node[fill=white,inner sep=2pt] {\scriptsize \ttt{1}} (0001);
     \draw[->] (000) -- node[fill=white,inner sep=2pt] {\scriptsize \ttt{2}} (0002);
     \draw[->] (001.south east) -- node[fill=white,inner sep=2pt] {\scriptsize \ttt{0}} (0010);
     \draw[->] (001.south east) -- node[fill=white,inner sep=2pt] {\scriptsize \ttt{1}} (0011);
     \draw[->] (010.south east) -- node[fill=white,inner sep=2pt] {\scriptsize \ttt{0}} (0100.north west);
     \draw[->] (010.south east) -- node[fill=white,inner sep=2pt] {\scriptsize \ttt{1}} (0101.north west);
     \draw[->] (010.south east) -- node[fill=white,inner sep=2pt] {\scriptsize \ttt{2}} (0102.north west);
     \draw[->] (011.south east) -- node[fill=white,inner sep=2pt] {\scriptsize \ttt{0}} (0110.north west);
     \draw[->] (011.south east) -- node[fill=white,inner sep=2pt] {\scriptsize \ttt{1}} (0111.north);
     \draw[->] (S)  -- node[fill=white,inner sep=2pt] {\scriptsize \ttt{1}} (1);
    \draw[->] (1)  -- node[fill=white,inner sep=2pt] {\scriptsize \ttt{0}} (10);
    \draw[->] (1)  -- node[fill=white,inner sep=2pt] {\scriptsize \ttt{1}} (11);
    \draw[->] (1)  -- node[fill=white,inner sep=2pt] {\scriptsize \ttt{2}} (12);
    \draw[->] (10)  -- node[fill=white,inner sep=2pt] {\scriptsize \ttt{0}} (100);
    \draw[->] (10)  -- node[fill=white,inner sep=2pt] {\scriptsize \ttt{1}} (101);
    \draw[->] (11) -- node[fill=white,inner sep=2pt] {\scriptsize \ttt{0}} (110);  
    \draw[->] (11) -- node[fill=white,inner sep=2pt] {\scriptsize \ttt{1}} (111); 
    \draw[->] (12) -- node[fill=white,inner sep=2pt] {\scriptsize \ttt{0}} (120); 
    \draw[->] (111.south west) -- node[fill=white,inner sep=2pt] {\scriptsize \ttt{0}} (1110.north east);
    \draw[->] (111.south west) -- node[fill=white,inner sep=2pt] {\scriptsize \ttt{1}} (1111);  
    \draw[->] (120) -- node[fill=white,inner sep=2pt] {\scriptsize \ttt{0}} (1200); 
    \draw[->] (120) -- node[fill=white,inner sep=2pt] {\scriptsize \ttt{1}} (1201); 
    \draw[->] (120) -- node[fill=white,inner sep=2pt] {\scriptsize \ttt{2}} (1202);
\end{tikzpicture}
}
\hspace{0.2cm}
\subfloat
{
\begin{tabular}{c@{\hspace{3pt}}|c ccc ccc ccc cc}
                        $n$ & -4 & -3 & -2 & -1 & 0 & 1 & 2 & 3 & 4 & 5 & 6 & 7\\
                        \hline 
                        $\rep_{u,0}(n)$  & \ttt{101} &  \ttt{110} & \ttt{111} & \ttt{1} & \ttt{0} & \ttt{001} &  \ttt{010} & \ttt{011} & \ttt{020} & \ttt{00100} & \ttt{00101} & \ttt{00110}  \\
                        \hline
                        $\rep_{u,1}(n)$ & \ttt{1111} & \ttt{10} & \ttt{11} & \ttt{12} & \ttt{00} & \ttt{01} & \ttt{02} & \ttt{0010} & \ttt{0011} & \ttt{0100} & \ttt{0101} & \ttt{0102} 
\end{tabular}
 }
\caption{On the top,  the tree $\mathcal{T}_{\mu,a|a}$ for the substitution $\mu \colon a\mapsto ccd,  b \mapsto cd,  c \mapsto ab,  d\mapsto a$ and the periodic point $u$ of period $p=2$ and seed $a|a$.  On the bottom,  depending on the residue $r\in\{0,1\}$,  we obtain a Dumont--Thomas numeration system and we give $(\rep_{u,r}(n))_{-4\le n\le 7}$ whose lengths are congruent to $r+1 \bmod p$.}
\label{fig: tree phi and phi-squared}
\end{figure}
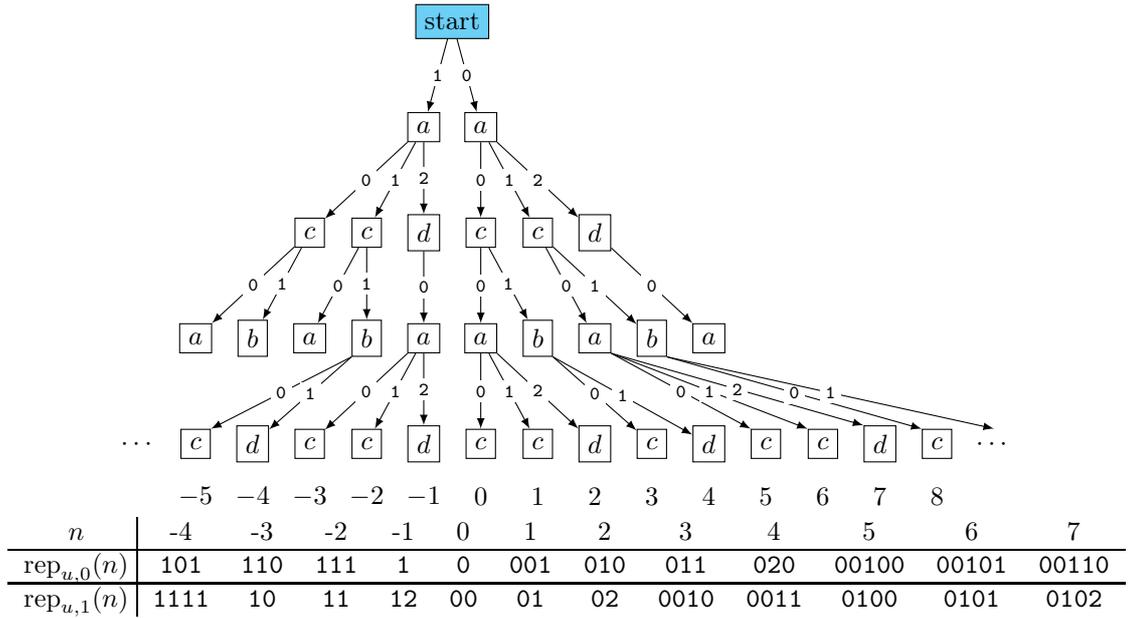
\end{example}

\section{Positional Dumont--Thomas numeration systems}
\label{sec: positionality of DT}

In this section, we study when a substitution generates a Dumont--Thomas numeration system that is positional to answer Question~\ref{quest:DT case}.  We state our theorem in the most general case, then present in~\cref{sec:particularizations} some particularizations as corollaries. 

\subsection{Sketch of the argument}
\label{sec: sketch}

The aim of this section is to informally sketch the argument that we will use to solve Question~\ref{quest:DT case}. We  will then present examples where this argument fails. This will allow us to motivate the technicalities that are introduced in~\cref{sec:mainresult} and to explain the reasoning without them getting in the way of the explanation.

\begin{sketch}\label{ske:sketch-main}
We let $\mu \colon A^* \to A^*$ be a substitution,  $u\in\Per_{\Z}(\mu)$ be a two-sided periodic point of $\mu$ with growing seed $b|a$ and period $p\ge 1$,  and $r\in\{0,1,\ldots,p-1\}$ be a residue.
We also consider the corresponding Dumont--Thomas complement numeration system associated with $\mu$,  $u$ and $r$.

Now consider two words $wt0^\ell$ and $w(t+1)0^\ell$ that label two paths in the tree $\mathcal{T}_{\mu,b|a}$ (see~\cref{fig:sketch-main}, where we have arbitrarily chosen to represent the situation with $\mathcal{T}_{\mu,a}$ instead) and assume these words are the representations of the integers $n_1$ and $n_2$ respectively,  i.e., the paths end on columns $n_1$ and $n_2$,  in addition to which we have some conditions for admissibility.
We also let $c$ and $d$ be the letters attained after reading $wt$ and $w(t+1)$ respectively.

If the numeration system is positional, then $n_2-n_1$ must equal $U_\ell$,  the weight in position $\ell$, as the representations of $n_1$ and $n_2$ differ only by one unit in position $\ell$. 
However, $n_2-n_1$ is the number of columns between those two nodes in the tree, which is $\len{\mu^\ell(c)}$, i.e.,  the number of level-$\ell$ descendants of $c$ in the tree (see again~\cref{fig:sketch-main}). Since this reasoning could be applied to any letter $c$ that has a sibling to its right in $\mathcal{T}_{\mu,b|a}$,  this  points towards the following implication: \emph{If the Dumont--Thomas numeration system associated with $\mu$,  $u$,  and $r$ is positional, then $\len{\mu^\ell(c)}=U_\ell$ for every letter $c$ that has a younger sibling in $\mathcal{T}_{\mu,b|a}$.}

The converse implication is also justified, almost by definition of a Dumont--Thomas numeration system. If the positive integer $n\ge 1$ is represented by the word $\texttt{0} \cdot w_{k-1}\cdots w_0$, this means that there is an $a$-admissible sequence $((m_i,a_i))_{i=0,\ldots,k-1}$ such that 
\begin{align}
\label{eq: equality for sketch}
u_{[0,n-1]}=\mu^{k-1}(m_{k-1})\ldots \mu^0(m_0)
\end{align}
and $\len{m_i}=w_i$ for every $i\in\{0,\ldots,k-1\}$. 
Because all letters in $m_i$ have $a_i$ as a younger sibling,  their image by $\mu^\ell$ has length $U_\ell$ by assumption. 
Taking the length in~\cref{eq: equality for sketch} yields $n=\sum_{i=0}^{k-1} U_i w_i$, which corresponds to the numeration system being positional with weights $(U_i)_{i\ge 0}$. The case of negative numbers is similar, with one correcting term corresponding to the value of $V_{k-1}$.
\end{sketch}

\begin{figure}
\centering
\begin{tikzpicture}
[xscale=.75, yscale=1.4, >=latex,
node0/.style={},
nodeM/.style={fill=cyan!50},
nodeA/.style={fill=orange!50}]
  \node[] (im1) at (-1.5,1)  {$\mathcal{T}_{\mu,a}$};
	\node[] (im1) at (-1.5,-1.15)  {$\mu^{i-1}(a)$};
    \node[] (im2) at (-1.3,-2.15)  {$\mu^{i}(a)$};
    \node[] (im2) at (-1.5,-3.15)  {$\mu^{i+\ell}(a)$};
	\draw[->] (-1.5,0.5) -- (-1.5,-1);
	\draw[->] (-1.5,-1) -- (-1.5,0.5);
	\node[] () at (-1,-0.25) {\scriptsize{$i-1$}};
    \node[rectangle,draw,node0] (root) at (0,0.5)  {$a$};
	\draw [draw=black] (-0.4,-1.3) rectangle (3.4,-1);
	\node[] (c) at (1,-1.15) {$x$};
	\draw[->,decorate,decoration={coil,aspect=0}]   (root)  -- (1,-1);  
	\node[] () at (0.4,-0.5) {\scriptsize{$w$}};
    \draw [draw=black] (-0.4,-2.3) rectangle (6.5,-2);
    \node[] (b) at (2,-2.15) {$c$};
    \node[] (d) at (2.9,-2.15) {$d$};
    \draw[->] (c) -- node[pos=0.7, fill=white,inner sep=2pt] {\tiny \ttt{0}} (0.2,-2);
    \draw[->] (c) -- node[pos=0.7, fill=white,inner sep=2pt] {\tiny $\ldots$} (1.1,-2);
    \draw[->] (c) -- node[pos=0.7, fill=white,inner sep=2pt] {\tiny $\ldots$} (4.2,-2);
    \draw[->] (c) -- node[pos=0.7, fill=white,inner sep=2pt] {\tiny \ttt{$t$}} (2,-2);
    \draw[->] (c) -- node[pos=0.7, fill=white,inner sep=2pt] {\tiny \ttt{$t+1$}} (2.9,-2);
    \draw [draw=black] (-0.4,-3.3) rectangle (10,-3);
    \draw[dashed]   (b)  -- node[near start, fill=white,inner sep=2pt] {\tiny $0^\ell$} (2.6,-3);  
    \draw[dashed]   (b)  -- (4.6,-3);  
     \draw [thick,draw=black] (2.6,-3.3) rectangle (4.6,-3);
     \node[] (b) at (3.6,-3.15) {$\mu^\ell(c)$};
     \draw [thin,draw=orange!70] (2.6,-3.3) rectangle (2.9,-3);
     \node[] (b) at (2.75,-3.45) {\tiny $n_1$};
     \draw[dashed]   (d)  -- node[near start, fill=white,inner sep=2pt] {\tiny $0^\ell$} (4.6,-3);  
     \draw[dashed]   (d)  -- (6.8,-3);  
     \draw [thick,draw=black] (4.6,-3.3) rectangle (6.8,-3);
     \node[] (b) at (5.7,-3.15) {$\mu^\ell(d)$};
     \draw [thin,draw=orange!70] (4.6,-3.3) rectangle (4.9,-3);
     \node[] (b) at (4.75,-3.45) {\tiny $n_2$};
\end{tikzpicture}
\caption{Comparing the values of $wt0^\ell$ and $w(t+1)0^\ell$ in the right part of $\mathcal{T}_{\mu,b|a}$.}
\label{fig:sketch-main}
\end{figure}
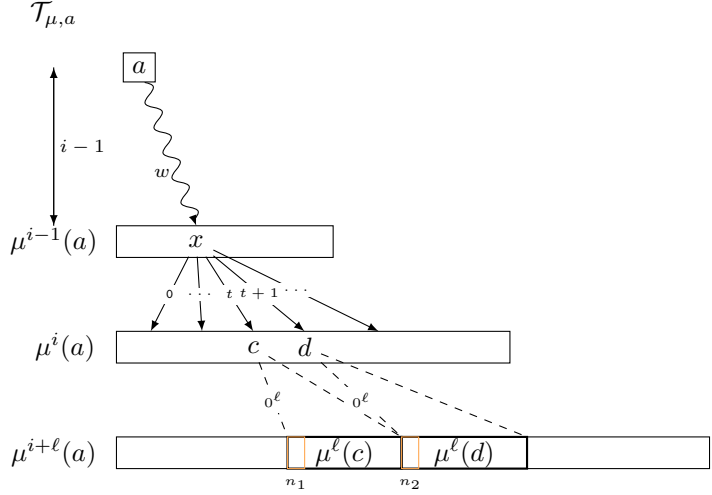

The above sketch leads us to formulate the next conjecture: \emph{The Dumont--Thomas numeration system associated with $\mu$,  $u$, and $r$ is positional if and only if $c \mapsto \len{\mu^\ell(c)}$ is constant on all letters $c$ that have a younger sibling in $\mathcal{T}_{\mu,b|a}$, in which case this constant is the weight $U_\ell$.}

However, trying to prove this conjecture reveals two issues with Sketch~\ref{ske:sketch-main}, which the following examples highlight.

\begin{example}
\label{ex: phi and phi-squared suite}
Recall the substitution $\mu \colon a\mapsto ccd,  b \mapsto cd,  c \mapsto ab,  d\mapsto a$ from~\cref{ex: phi and phi-squared}.
Observe that $\mu$ is not primitive and is built by intertwining on distinct alphabets the Fibonacci substitution $x\mapsto xy,  y\mapsto x$ and the substitution $x\mapsto xxy, y\mapsto xy$ associated with the squared Golden Ratio.
Furthermore,  $\mu$ has periodic points with period $p=2$.

The letters that have a younger sibling in $\mathcal{T}_{\mu,a}$ are $a$ and $c$, but the sequences of the lengths of their consecutive images under $\mu$ are respectively $(\len{\mu^j(a)})_{j\ge 0} = 1, 3, 5, 13, 21, 55, 89, 233, 377,  \ldots$ and $(\len{\mu^j(c)})_{j\ge 0} = 1, 2, 5, 8, 21, 34, 89, 144, 377, \ldots$.
Our tentative condition for positionality is not satisfied. Despite this, the corresponding Dumont--Thomas numeration system is positional for both values of $r$, with the weight sequence $1, 2, 5, 8, 21, 34, \ldots$ for $r=0$ and $1,3,5,13,21,55,\ldots$ for $r=1$, both of which are obtained by taking two out of every three terms in the Fibonacci sequence. 

The way to see this is to examine our proof for the converse implication in the above sketch. With the same notation, for $r=0$ we get that $m_i$ is a power of $a$ if $i$ is even and a power of $c$ if $i$ is odd. Therefore, we obtain the sequence of weights by choosing $U_i=\len{\mu^i(a)}$ if $i$ is even, and  $U_i=\len{\mu^i(c))}$ if $i$ is odd. Since the letters $a$ and $c$ are never present at the same level of the tree, the difference in image length is not a problem in our case.
\end{example}

\cref{ex: phi and phi-squared suite} illustrates that, in the case where $\mu$ is not primitive, not all letters may appear at every level of the tree, and as such we may only control some of their image lengths. This will be the purpose of the sets $E_j$ in the following section with~\cref{def:E_j}.

\begin{example}
\label{ex:bca-bb-b}

Consider the substitution $\mu\colon a\mapsto bca,\, b\mapsto bb,\, c\mapsto b$ and the seed $a|b$. The initial fragment of the associated numeration system over $\Z^{<0}$ is represented in~\cref{fig:ex-bca-bb-b}. The letters $b$ and $c$ both appear in the tree with a younger sibling, and they have images of different lengths. However, the numeration system is still positional. One can show that the weights are $U_i=2^i$ and $V_0=1$,  $V_i=3\cdot 2^{i-1}$ for every $i\geq 1$ by showing that the language of this numeration system is $\texttt{1}\{\texttt{0},\texttt{1}\}^*\setminus \texttt{111}\{\texttt{0},\texttt{1}\}^*$.

Our sketched argument fails, because if $wt$ is a path to a node labeled by $c$ in the tree, then $w(t+1)0^\ell$ is never the representation of any number, due to Condition~\cref{eq: non equality with mupa} from~\cref{thm: unique admin for left infinite periodic len r}. Thus we cannot constrain the lengths of the images of $c$.
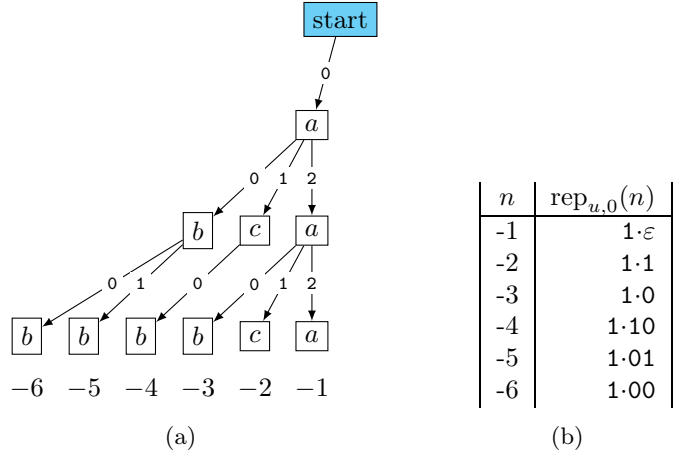
\begin{figure}
\centering
\subfloat[\label{fig:ex-bca-bb-b-tree}]
{
\begin{tikzpicture}[xscale=.75, yscale=1.4, >=latex,
node0/.style={},
nodeM/.style={fill=cyan!50}]
\node[rectangle,draw,nodeM] (S) at (.5,1) {start};
    \node[rectangle,draw,node0] (var) at (0,0)  {$a$};
    \node[rectangle,draw,node0] (0) at (-2,-1) {$b$};
    \node[rectangle,draw,node0] (1) at (-1,-1) {$c$};
    \node[rectangle,draw,node0] (2) at (0,-1) {$a$};
    \node[rectangle,draw,node0] (00) at (-5,-2) {$b$};
    \node[rectangle,draw,node0] (01) at (-4,-2) {$b$};
    \node[rectangle,draw,node0] (10) at (-3,-2) {$b$};
    \node[rectangle,draw,node0] (20) at (-2,-2) {$b$};
    \node[rectangle,draw,node0] (21) at (-1,-2) {$c$};
    \node[rectangle,draw,node0] (22) at (0,-2) {$a$};
    \node[] (m1) at (0,-2.5)  {$-1$};
    \node[] (m2) at (-1,-2.5)  {$-2$};
    \node[] (m3) at (-2,-2.5)  {$-3$};
    \node[] (m4) at (-3,-2.5)  {$-4$};
    \node[] (m5) at (-4,-2.5)  {$-5$};
    \node[] (m5) at (-5,-2.5)  {$-6$};
	\draw[->] (S)  -- node[fill=white,inner sep=2pt] {\scriptsize \ttt{1}} (var);
    \draw[->] (var)  -- node[fill=white,inner sep=2pt] {\scriptsize \ttt{0}} (0);
    \draw[->] (var)  -- node[fill=white,inner sep=2pt] {\scriptsize \ttt{1}} (1);
    \draw[->] (var)  -- node[fill=white,inner sep=2pt] {\scriptsize \ttt{2}} (2);
    \draw[->] (0) -- node[fill=white,inner sep=2pt] {\scriptsize \ttt{0}} (00);
    \draw[->] (0) -- node[fill=white,inner sep=2pt] {\scriptsize \ttt{1}} (01);
    \draw[->] (1) -- node[fill=white,inner sep=2pt] {\scriptsize \ttt{0}} (10);
    \draw[->] (2) -- node[fill=white,inner sep=2pt] {\scriptsize \ttt{0}} (20);
    \draw[->] (2) -- node[fill=white,inner sep=2pt] {\scriptsize \ttt{1}} (21);
    \draw[->] (2) -- node[fill=white,inner sep=2pt] {\scriptsize \ttt{2}} (22);
\end{tikzpicture}
}
\hspace{1cm}
\subfloat[\label{fig:ex-bca-bb-b-table}]
{
\begin{tabular}{|c|r|}
                        $n$ & $\rep_{u,0}(n)$          \\ 
                        \hline
                        -1 & \ttt{1}$\cdot\varepsilon$ \\
                        -2 & \ttt{1$\cdot$1}          \\
                        -3 & \ttt{1$\cdot$0}          \\
                        -4 & \ttt{1$\cdot$10}          \\
                        -5 & \ttt{1$\cdot$01}                   \\
                        -6 & \ttt{1$\cdot$00}                  
\end{tabular}
}
\caption{The tree $\mathcal{T}_{\mu,a|\cdot}$ and the first few representations of negative integers in the numeration system associated with the substitution $\mu\colon a \mapsto bca,  b \mapsto bb,  c \mapsto b$,  the left-infinite periodic point $u=\cdots bbca$ and residue $r=0$.}
\label{fig:ex-bca-bb-b}
\end{figure}
\end{example}

The sort of argument of~\cref{ex:bca-bb-b} can happen for letters that only appear in column $-2$ in $\mathcal{T}_{\mu,b|a}$ as $w(t+1)$ will lead to a node in column $-1$ and paths extending this one may not correspond to representations of numbers because of Condition~\cref{eq: non equality with mupa}. In fact, such letters may lead to even stranger behavior.

\begin{example}
\label{ex:cursed}
Consider the eight-letter substitution $\mu$ defined by $a_1 \mapsto bca_2$,  $f \mapsto b^2$,  $a_2 \mapsto a_3$, $b \mapsto d^2$,  $c \mapsto d^2e$,  $a_3 \mapsto a_1$,  $d \mapsto f^2$,  $e \mapsto f^4$,  
and its periodic point $u$ with seed $a_1|\cdot$ with period $p=3$ (specifying the right part is not of importance for the sequel).
When considering the residue class $r=2$, this numeration system is not positional as we have $\rep_{u,r}(-6)=\ttt{100}$, $\rep_{u,r}(-4)=\ttt{110}$ and $\rep_{u,r}(-1)=\ttt{120}$, leading to both $U_1=2$ and $U_1=3$, a contradiction. However, if we change $\mu(c)$ to $de$ instead of $d^2e$, the  numeration system becomes positional for $r=2$ despite the fact that the lengths of most images under $\mu$ of $b$ and $c$ are still different.
\end{example}

These examples motivate the detours and technical details present in the next section, notably~\cref{def:C-2}. Keeping these hurdles in mind, we now move on to proving the main result (\cref{thm: main}), using the sketch above but proceeding in a more careful manner.

\subsection{Main result}\label{sec:mainresult}

To state the main result, we need to define some particular sets of letters. 
The setting of this section is as follows: we consider a substitution $\mu \colon A^* \to A^*$  and a two-sided periodic point $u\in\Per_{\Z}(\mu)$ of $\mu$ with growing seed $b|a$ and period $p\ge 1$. 
We draw the tree $\mathcal{T}_{\mu,b|a}$.
For a fixed residue $r\in\{0,\ldots,p-1\}$,  we also consider the corresponding Dumont--Thomas numeration system $\rep_{u,r} \colon \mathbb{Z} \to \{\ttt{0},\ttt{1}\}D^*$ from~\cref{def: DTNS for Z equiv r mod p}.

\begin{definition}\label{def:E_j}
Let $j\in \{0,\ldots,p-1\}$.
We let $E_j$ be the set of letters $c\in A$ such that there exist some integer $k\ge 1$ and some sequence $((m_i,a_i))_{i=0,\ldots,k-1}\in (A^*\times A)^k$ that verifies the following:
\begin{itemize}
\item the sequence $((m_i,a_i))_{i=0,\ldots,k-1}$ is $a$- or $b$-admissible;
\item we have $k\equiv j\bmod p$;
\item the letter $c$ appears at the end of the word $m_0$;
\item if $((m_i,a_i))_{i=0,\ldots,k-1}$ is $b$-admissible,  then $\mu^{k-1}(m_{k-1})\cdots \mu^0(m_0)a_0\neq \mu^k(b)$. 
\end{itemize} 
\end{definition}

Thinking in terms of the tree $\mathcal{T}_{\mu,b|a}$, the first three conditions simply describe letters that appear in the tree at some level congruent to $j \bmod p$ and have a younger sibling on that level.
In view of~\cref{fig: a-admissible illustration},  the last condition excludes letters $c$ that only appear in column $-2$ in the tree $\mathcal{T}_{\mu,b|a}$. To deal with these letters, a dedicated condition is required,  as follows.
Recall that the $j$th level of the tree $\mathcal{T}_{\mu,b|a}$ corresponds to the $j$th iteration of $\mu$ on $b|a$ (by convention, the root of $\mathcal{T}_{\mu,b|a}$ is on level $-1$).

\begin{definition}\label{def:C-2}
Let $j\in \{0,\ldots,p-1\}$.
On level $j$ in the tree $\mathcal{T}_{\mu,b|a}$,  we let $c$ be the letter in column $-2$ and $d$ be the letter in column $-1$ (i.e.,  $d$ is immediately to the right of $c$).
If $c$ and $d$ share the same parent, we consider two cases to modify $E_j$.
If $\len{\mu^{p-j}(d)}>1$, then we add $c$ to $E_j$ if it was not already present.
If $\len{\mu^{p-j}(d)}=1$ and $j\leq r<p$, then we add the following condition:
\begin{equation}
\label{eq: condition C}
\text{$\len{\mu^{r-j}(c)}$ must be equal to $\len{\mu^{r-j}(e)}$ for every letter $e\in E_j$.}
\end{equation}
As we will see below,  all these values are equal if the numeration system is positional.
\end{definition}

\begin{example}
\label{ex: phi and phi-squared suite suite}
For the substitution $\mu$ from~\cref{ex: phi and phi-squared},  consider its two-sided periodic point with seed $a|a$,  so $p=2$.
In this case,  we obtain $E_0=\{a\}$ and $E_1=\{c\}$. 
Note that the letters $b,d$ do not have younger siblings in $\mathcal{T}_{\mu,a|a}$.
For $j=1$,  we observe that $c$ is on level $j$ and column $-2$.
It also has $d$ as younger sibling and we have $\len{\mu^{p-j}(d)}=\len{\mu(d)}=1$.
Condition~\cref{eq: condition C} is trivially satisfied for $r=1$. 

Consider the substitution $\mu\colon a\mapsto bcd,\, d\mapsto ba,\, b\mapsto b^2,\, c\mapsto b$, its two-sided periodic point of period $2$ with seed $a|b$ and the residue $r=0$. If we go only by~\cref{def:E_j}, we will find $E_0=E_1=\{b\}$, but if we add~\cref{def:C-2}, $c$ is added to $E_1$, and we now correctly find that the system is not positional (which we can also see from the representations of $-5,\, -3$ and $-2$).
\end{example}

We can now state the main result.

\begin{theorem}
\label{thm: main}
Let $\mu \colon A^* \to A^*$ be a substitution and let $u\in\Per_{\Z}(\mu)$ be a two-sided periodic point with growing seed $b|a$ and period $p\ge 1$. 
The Dumont--Thomas complement numeration system associated with $\mu$,  $u$,  and $r$ is positional if and only if  for every $j\in\{0,\ldots,p-1\}$,  the map $c\mapsto \len{\mu^\ell(c)}$ is constant over $E_j$ for every $\ell$ such that $\ell+j\equiv r\bmod p$, and condition~\cref{eq: condition C} is satisfied for the letters where it was added.

In this case, the sequences $U,V$ of weights of the numeration system are given as follows: for every $\ell \ge 0$,  we define $U_\ell = \len{\mu^\ell(c)}$ for a letter $c\in E_j$ where $j\in\{0,\ldots,p-1\}$ and $\ell+j\equiv r\bmod p$; and $V_\ell=\len{\mu^\ell(b)}$ for every $\ell\in\N$.
\end{theorem}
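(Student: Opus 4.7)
The plan is to follow~\cref{ske:sketch-main}: for each letter $c$ witnessing membership in $E_j$, compare the integers represented by the path to $c$ (extended by $\ell$ zeros) and by the path to $c$'s younger sibling (extended by $\ell$ zeros); the tree geometry forces their difference to be $|\mu^\ell(c)|$, and positionality (when both extensions are valid representations) forces it to equal $U_\ell$.

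For the $(\Rightarrow)$ implication, I would assume positionality with weights $U,V$, fix $j$, a letter $c\in E_j$, and $\ell$ with $\ell+j\equiv r\bmod p$, and select an admissible witness for $c$ of length $k\equiv j\bmod p$ from~\cref{def:E_j} (or the column $-2$ configuration of~\cref{def:C-2}). The key step is to arrange validity of both extended paths by shortening $k$ by multiples of $p$ using periodicity: whenever the first $p$ admissibility steps of the current sequence violate the relevant validity condition, the letter $a_{k-p}$ coincides with the seed letter, and the subsequence of length $k-p$ is still an admissible witness for $c$ starting from the same seed. This terminates at a length for which validity is either vacuous (because $k<p$) or implied by the $E_j$ condition. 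The comparison then forces $U_\ell=|\mu^\ell(c)|$. For a letter added to $E_j$ via~\cref{def:C-2} with $|\mu^{p-j}(d)|>1$, the leftmost descendant of $d$ at level $j+\ell$ lies at column $-|\mu^\ell(d)|<-1$, making the sibling path a valid representation; in the remaining case $|\mu^{p-j}(d)|=1$, validity of the sibling path requires $j+\ell<p$, so necessarily $\ell=r-j$ with $j\le r<p$, which produces exactly condition~\cref{eq: condition C}.

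For the $(\Leftarrow)$ implication, I would define $U_\ell$ and $V_\ell$ as in the statement and check that the positional formula recovers $n=\val(\rep(n))$ for every $n\in\Z$. For $n\ge 0$,~\cref{thm: unique admin for right infinite periodic len r} yields
\[
n=\sum_{i=0}^{k-1}|\mu^i(m_i)|,\qquad k\equiv r\bmod p,
\]
and each letter of $m_i$ sits at tree level $k-i$ with a younger sibling, so belongs to $E_{(r-i)\bmod p}$ via~\cref{def:E_j} applied to the subsequence of length $k-i$ ending at that letter's immediate right neighbour; the constancy hypothesis then collapses the sum to $\sum|m_i|U_i$. For $n\le-1$,~\cref{thm: unique admin for left infinite periodic len r} produces $n=-V_k+\sum|m_i|U_i$ analogously, and the choice $V_\ell=|\mu^\ell(b)|$ is forced by considering the path $\ttt{1}\,0^\ell$, which reaches column $-|\mu^\ell(b)|$ and so represents the integer $-V_\ell$.

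The main obstacle is locating the letters of $m_i$ when $a_i$ sits at column $-1$: by~\cref{eq: non equality with mupa} this forces $k-i<p$, so the situation falls within the scope of~\cref{def:C-2}. Letters of $m_i$ at columns $\le-3$ still witness~\cref{def:E_j} via the subsequence ending at the column $-2$ neighbour. The column $-2$ letter belongs to $E_{(r-i)\bmod p}$ via~\cref{def:C-2} when $|\mu^{p-(k-i)}(a_i)|>1$; otherwise the chain $a_{i-1},a_{i-2},\ldots,a_{k-p}$ is forced, by $|\mu^{p-(k-i)}(a_i)|=1$, to consist of single-letter $\mu$-images terminating at $b$ at column $-1$ on level $p$, which makes~\cref{eq: non equality with mupa} fail as soon as $k\ge p$. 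Consequently the only valid admissible sequence placing such a letter $c$ in some $m_i$ has $k=r<p$, in which case $i=r-j$ for $j=k-i$, and~\cref{eq: condition C} supplies the required equality $|\mu^i(c)|=U_i$ exactly.
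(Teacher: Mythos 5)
Your strategy is the same as the paper's: in the forward direction you compare the two extended paths ending at $c$ and at its younger sibling after shortening the witness by multiples of $p$ (your shortening step is exactly the paper's \cref{lem: technical on c in Ej}), you extract condition~\cref{eq: condition C} from the comparison at $\ell=r-j$, and in the converse you rewrite $n$ as a signed sum of lengths and place every letter of $m_i$ in $E_{(k-i)\bmod p}$ via a truncated witness, delegating the column $-1$/$-2$ letters to \cref{def:C-2}. Your converse matches the paper's Cases 1--3 and is essentially complete.

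In the forward direction, however, two validity checks are missing or misjustified, and they are exactly where the paper has to work. First, you assert that after shortening, validity of the two padded paths is ``vacuous (because $k<p$) or implied by the $E_j$ condition''. It is not vacuous: the padded sequences have length $k+\ell$, which is typically at least $p$ even when $k<p$, so in the $b$-admissible case one must still verify the non-equality~\cref{eq: non equality with mupa} at depth $p$ of the \emph{extended} path; the paper does this with a short length estimate whose strictness comes either from the fourth bullet of \cref{def:E_j} (proper prefix of $\mu^k(b)$) or from $\len{\mu^{p-j}(d)}>1$. Moreover, the path ending at $c$ padded with zeros can degenerate to the all-zero word (when $k<p$, $m_{k-1}=\cdots=m_1=\varepsilon$ and $m_0=c$), in which case it is not a representation at all and the ``difference of values'' argument breaks down; the conclusion is recovered by reading $U_\ell$ directly off the sibling word, which represents $\len{\mu^\ell(c)}$ and has positional value $U_\ell$. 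Second, for letters added by \cref{def:C-2} with $\len{\mu^{p-j}(d)}>1$, your validity criterion --- that the leftmost descendant of $d$ at level $j+\ell$ sits at column $-\len{\mu^\ell(d)}<-1$ --- is the wrong test and can even be false: for $\ell=r-j$ one may have $\len{\mu^{r-j}(d)}=1$ while $\len{\mu^{p-j}(d)}>1$. What \cref{thm: unique admin for left infinite periodic len r} constrains is the node at depth $p$ of the path, so the correct justification is $\len{\mu^{p-j}(d)}>1$ itself (and, when $j+\ell<p$, the fact that the condition is void). These points are fixable within your scheme, but as written the forward implication is not fully established.
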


\begin{remark}
It may be, although rarely, that $E_j$ is empty.
(For instance, consider the case of $\mu\colon a\mapsto b,\, b\mapsto aa$ and the set $E_1$.)
In this case, if condition~\cref{eq: condition C} applies to some letter $c$, we may give the weight $\len{\mu^{r-j}(c)}$ to position $r-j$. Otherwise, this means that only the digit $0$ appears at positions congruent to $j\bmod p$ in this numeration system, and as such the weight given to these positions is arbitrary.
\end{remark}

The proof of our main result relies on the following technical lemma.
Note that the conditions in the statement are those of~\cref{thm: unique admin for left infinite periodic len r,thm: unique admin for right infinite periodic len r}.

\begin{lemma}
\label{lem: technical on c in Ej}
Let $j\in\{0,\ldots,p-1\}$. 
Fix a letter $c$ that is in $E_j$ according to~\cref{def:E_j}.

If there exists a $b$-admissible sequence $((m_i,a_i))_{i=0,\ldots,k-1}$ that verifies the four conditions in~\cref{def:E_j}, then there exists a $b$-admissible sequence that verifies those conditions, plus the extra condition that $\mu^{p-1}(m_{k-1})\cdots \mu^0(m_{k-p})a_{k-p} \neq \mu^p(b)$ or $k<p$.

If there exists an $a$-admissible sequence $((m_i,a_i))_{i=0,\ldots,k-1}$ that verifies the
three conditions in~\cref{def:E_j}, then there exists an $a$-admissible sequence that verifies those conditions, plus the extra condition that
$m_{k-1}\cdots m_{k-p}\neq\varepsilon$ or $k<p$.
\end{lemma}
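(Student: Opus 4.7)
My plan is to proceed by induction on $k$. The base case $k < p$ is trivial in both cases, since the alternative ``$k < p$'' in the extra condition is then automatically satisfied. For the inductive step, assume $k \geq p$; if the extra condition already holds for the given sequence, we are done, and otherwise I will produce a shorter sequence of length $k - p$ still satisfying all the conditions of~\cref{def:E_j} and apply the induction hypothesis.

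For the right case ($a$-admissible sequence), the failure of the extra condition means $m_{k-1} = m_{k-2} = \cdots = m_{k-p} = \varepsilon$. By admissibility, $a_{k-\ell}$ is then the first letter of $\mu^\ell(a)$ for each $\ell \in \{1, \ldots, p\}$; in particular, $a_{k-p}$ is the first letter of $\mu^p(a)$, which equals $a$ since $u_0 = a$ is a periodic point of period $p$. The truncated sequence $((m_i, a_i))_{i=0, \ldots, k-p-1}$ is then still $a$-admissible (the top condition becomes that $m_{k-p-1}a_{k-p-1}$ is a prefix of $\mu(a) = \mu(a_{k-p})$, which already held). The letter $c$ at the end of $m_0$ and the congruence $k - p \equiv j \bmod p$ are preserved, so the induction hypothesis applies.

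For the left case ($b$-admissible sequence), the failure of the extra condition gives $\mu^{p-1}(m_{k-1})\cdots\mu^0(m_{k-p})a_{k-p} = \mu^p(b)$. Since $u_{-1} = b$ and $u$ has period $p$, the last letter of $\mu^p(b)$ is $b$, forcing $a_{k-p} = b$; hence the truncated sequence $((m_i, a_i))_{i=0, \ldots, k-p-1}$ is still $b$-admissible. The delicate point is verifying the fourth condition of~\cref{def:E_j} survives truncation. Writing $w = \mu^{p-1}(m_{k-1})\cdots\mu^0(m_{k-p})$, the equality $\mu^p(b) = wb$ yields $\mu^k(b) = \mu^{k-p}(w) \cdot \mu^{k-p}(b)$, and therefore
\[
\mu^{k-1}(m_{k-1})\cdots\mu^0(m_0)a_0 = \mu^{k-p}(w) \cdot \mu^{k-p-1}(m_{k-p-1})\cdots\mu^0(m_0)a_0,
\]
from which the inequality $\mu^{k-1}(m_{k-1})\cdots\mu^0(m_0)a_0 \neq \mu^k(b)$ is equivalent to the corresponding fourth-condition inequality $\mu^{k-p-1}(m_{k-p-1})\cdots\mu^0(m_0)a_0 \neq \mu^{k-p}(b)$ for the truncated sequence. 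The induction hypothesis then concludes.

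The main hurdle I anticipate is this last algebraic manipulation: showing that the shortening operation, although intuitive on the tree (it collapses a leftmost or rightmost vertical stretch of $p$ edges stemming from periodicity), really does preserve the non-degeneracy encoded in the fourth condition of~\cref{def:E_j}. Once that bookkeeping is in place, the induction terminates as soon as either $k < p$ or the extra condition is realized, and both cases are handled uniformly.
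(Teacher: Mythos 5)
Your proof is correct and follows essentially the same route as the paper: when the extra condition fails, use periodicity of $u$ to deduce $a_{k-p}=a$ (resp.\ $a_{k-p}=b$), crop the sequence to length $k-p$, and show via the factorization $\mu^k(b)=\mu^{k-p}(w)\mu^{k-p}(b)$ that the fourth condition of~\cref{def:E_j} is preserved. Your explicit induction on $k$ is just a formalization of the paper's ``iterating this process'', so there is nothing substantive to add.
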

\begin{proof}
For the first part, if we assume that $k\geq p$ and $\mu^{p-1}(m_{k-1})\cdots \mu^0(m_{k-p})a_{k-p} = \mu^p(b)$,  then the fact that $u$ has period $p$ implies that $a_{k-p}=b$.
The cropped sequence $((m_i,a_i))_{i=0,\ldots,k-p-1}$ is thus  again $b$-admissible.
We also have $k-p\equiv j\bmod p$ and $c$ is the last letter of $m_0$.
We also have $\mu^{k-p-1}(m_{k-p-1})\cdots \mu^0(m_0)a_0\neq \mu^{k-p}(b)$ for otherwise
\begin{align*}
 \mu^k(b)
 &= \mu^{k-p}(\mu^{p}(b)) = \mu^{k-p} (\mu^{p-1}(m_{k-1})\cdots \mu^0(m_{k-p})b) \\
 &= \mu^{k-1} (m_{k-1})\cdots \mu^{k-p}(m_{k-p}) \mu^{k-p}(b) \\
&= \mu^{k-1} (m_{k-1})\cdots \mu^{k-p}(m_{k-p}) \mu^{k-p-1}(m_{k-p-1})\cdots \mu^0(m_0)a_0,
\end{align*}
which is forbidden by the condition of~\cref{def:E_j}.
We have showed that the shorter sequence $((m_i,a_i))_{i=0,\ldots,k-p-1}$ also satisfies the condition of the previous definition. Iterating this process, we get the existence of the required sequence.

Similarly, if the sequence is $a$-admissible with $k\geq p$ and $m_{k-1}m_{k-2}\cdots m_{k-p}=\varepsilon$, 
then the fact that $u$ has period $p$ implies that $a_{k-p}=a$.
We conclude in the same way: the cropped sequence $((m_i,a_i))_{i=0,\ldots,k-p-1}$ is thus again $a$-admissible,  we also have $k-p\equiv j\bmod p$, and $c$ is the last letter of $m_0$.
\end{proof}

We now prove our main result.

\begin{proof}[Proof of~\cref{thm: main}]
We start by assuming that the Dumont--Thomas complement numeration system associated with $\mu$, $u$ and $r$ is positional and we let $U,V$ be its sequences of weights as in~\cref{sec:prelim}.
Fix some $j\in\{0,\ldots,p-1\}$.
We prove that $c\mapsto\len{\mu^\ell(c)}$ is constant over $E_j$ for all suitable values of $\ell$. 
Indeed, consider some letter $c\in E_j$. 
From~\cref{def:E_j} and~\cref{def:C-2},  we know that there is some $a$- or $b$-admissible sequence $((m_i,a_i))_{i=0,\ldots,k-1}$ with $k\equiv j \bmod p$ and where $c$ is the last letter of $m_0$. 
Additionally,  if the sequence is $b$-admissible, then we may assume that one of the following holds:
\begin{itemize}
\item we have $\mu^{k-1}(m_{k-1})\cdots \mu^0(m_0)a_0\neq \mu^k(b)$ and either $\mu^{p-1}(m_{k-1})\cdots \mu^0(m_{k-p})a_{k-p} \neq \mu^p(b)$ or $k<p$ (by~\cref{def:E_j} and~\cref{lem: technical on c in Ej});
\item we have $\mu^{k-1}(m_{k-1})\cdots \mu^0(m_0)a_0=\mu^k(b)$ and $|\mu^{p-k}(a_0)|>1$ (by~\cref{def:C-2}).
\end{itemize} 
On the other hand,  if the sequence is $a$-admissible, then from~\cref{lem: technical on c in Ej} we may assume that $m_{k-1}\cdots m_{k-p}\neq\varepsilon$ or $k<p$.

We define two new sequences $((m_n',a_n'))_{n=0,\ldots,k-1+\ell}$ and $((m_n'',a_n''))_{n=0,\ldots,k-1+\ell}$ of elements of $A^*\times A$ as follows.
The first sequence is defined by
\[
(m_{n}',a_{n}')
=
\begin{cases}
(m_{n-\ell},a_{n-\ell}), & \text{ if }n\in\{\ell,\ldots,k-1+\ell\};\\
(\varepsilon,\mu(a_{n+1}')_0), & \text{ if }n\in\{0,\ldots,\ell-1\}.
\end{cases}
\]
For the second sequence,  the definitions of $m''$ and $a''$ are the same,  except at index $n=\ell$ for which we set $m_{\ell}''c=m_{\ell}'$ and $a_{\ell}''=c$. 
(Note that $m'_\ell=m_0$ ends with $c$,  so $m_{\ell}''$ is the prefix of $m_0$ without its last letter,  which we may write $m_0 c^{-1}$.) Thinking in terms of the tree, these sequences correspond to the paths labeled by $w(t+1)0^\ell$ and $wt0^\ell$ from our Sketch~\ref{ske:sketch-main}.

Now, notice that the sequence $((m_n',a_n'))_{n=0,\ldots,k-1+\ell}$ satisfies the conditions of~\cref{thm: unique admin for left infinite periodic len r} in the $b$-admissible case and the conditions of~\cref{thm: unique admin for right infinite periodic len r} in the $a$-admissible case. Indeed, the only case where this is not trivial is the $b$-admissible case with $k<p$, in which case the result is obtained by considering
\[
\len{\mu^{p-1}(m_{k-1})\cdots \mu^{p-k}(m_0)} \leq \len{\mu^{p-1}(m_{k-1})\cdots \mu^{p-k}(m_0) \mu^{p-k}(a_0)}-1 \leq \len{\mu^{p-k}(\mu^k(b))}-1,
\]
where one of the two inequalities must be strict.
The same result is true for the sequence $((m_n'',a_n''))_{n=0,\ldots,k-1+\ell}$, with the sole exception of sequences of this form with $k< p$, $m_{k-1}=\cdots=m_1=\varepsilon$, and $m_0=c$.

From~\cref{def: DTNS for Z equiv r mod p}, we obtain the following. 
On the one hand, if the  sequences are $b$-admissible, we get that $\ttt{1}\cdot\len{m_{k-1}}\cdots\len{m_0}\cdot \ttt{0}^\ell$ is the representation of the integer
\begin{equation}\label{eq: first integer in main}
\len{\mu^{k-1+\ell}(m_{k-1})\cdots\mu^\ell(m_0)\mu^{\ell-1}(\varepsilon)\cdots\mu^0(\varepsilon)}-\len{\mu^{k+\ell}(b)},
\end{equation}
while $\ttt{1}\cdot\len{m_{k-1}}\cdots\len{m_0c^{-1}}\cdot \ttt{0}^\ell$ is the representation of the integer
\begin{equation}\label{eq: second integer in main}
\len{\mu^{k-1+\ell}(m_{k-1})\cdots\mu^\ell(m_0c^{-1})\mu^{\ell-1}(\varepsilon)\cdots\mu^0(\varepsilon)}-\len{\mu^{k+\ell}(b)}.
\end{equation}
Subtracting~\cref{eq: second integer in main} from~\cref{eq: first integer in main},  we get that $U_\ell=\len{\mu^\ell(c)}$ (recall that $U_\ell$ is the weight in position $\ell$ in the numeration system).
On the other hand, if the sequences are $a$-admissible, we get that $\ttt{0}\cdot\len{m_{k-1}}\cdots\len{m_0}\cdot \ttt{0}^\ell$ is the representation of
\begin{equation}\label{eq: third integer in main}
\len{\mu^{k-1+\ell}(m_{k-1})\cdots\mu^\ell(m_0)\mu^{\ell-1}(\varepsilon)\cdots\mu^0(\varepsilon)}, 
\end{equation}
while $\ttt{0}\cdot\len{m_{k-1}}\cdots\len{m_0c^{-1}}\cdot \ttt{0}^\ell$ is the representation of
\begin{equation}\label{eq: fourth integer in main}
\len{\mu^{k-1+\ell}(m_{k-1})\cdots\mu^\ell(m_0c^{-1})\mu^{\ell-1}(\varepsilon)\cdots\mu^0(\varepsilon)}.
\end{equation}
Subtracting again~\cref{eq: fourth integer in main} from~\cref{eq: third integer in main} leads to the same conclusion. 
In the exceptional case that was singled out above, \cref{eq: fourth integer in main} does not hold anymore but~\cref{eq: third integer in main} becomes that $\ttt{0}^{k-1}\ttt{1}\ttt{0}^\ell$ is a representation of $\len{\mu^\ell(c)}$, which leads to the same conclusion once more.
All in all,  for every $c\in E_j$ and every $\ell$ such that $j+\ell\equiv r\bmod p$, we have shown that $\len{\mu^\ell(c)}=U_\ell$,  thus the map $c\mapsto \len{\mu^\ell(c)}$ is constant over $E_j$ as desired. 

To end the proof of this implication,  we now show that condition~\cref{eq: condition C} is satisfied for the required letters, still in the case of a positional numeration system. 
We let $c$ be a letter for which the hypotheses of condition~\cref{eq: condition C} are fulfilled.
As $c$ is on the $j$th level of the tree $\mathcal{T}_{\mu,b|a}$,  there exists some $b$-admissible sequence $((m_n,a_n))_{n=0,\ldots,j-1}$ with $c$ being the last letter of $m_0$ and $\mu^{j-1}(m_{j-1})\cdots\mu^0(m_0)a_0=\mu^{j}(b)$. 
Since $\len{\mu^{p-j}(a_0)}=1$ and $j\leq r<p$, it follows that the sequence
\[
(m_{j-1},a_{j-1}),\ldots,(m_0,a_0),(\varepsilon, \mu(a_0)),\ldots,(\varepsilon,\mu^{r-j}(a_0))
\]
is $b$-admissible and 
\begin{equation}\label{eq: equality with mu r b}
\mu^{r-1}(m_{j-1})\cdots\mu^0(m_0)\mu^{r-j}(a_0)=\mu^r(b). 
\end{equation}
Therefore, the representation of $-1$ must be $\ttt{1}\cdot \len{m_{j-1}}\cdots\len{m_0}\cdot \ttt{0}^{r-j}$. On the other hand, we have as above that  $\ttt{1}\cdot\len{m_{j-1}}\cdots\len{m_0c^{-1}}\cdot \ttt{0}^{r-j}$ is the representation of 
\begin{equation}\label{eq: this number main}
\len{\mu^{r-1}(m_{j-1})\cdots\mu^{r-j}(m_0c^{-1})\mu^{r-j-1}(\varepsilon)\cdots\mu^0(\varepsilon)}-\len{\mu^{r}(b)}.
\end{equation}
Utilizing Equality~\cref{eq: equality with mu r b} to substitute $\mu^{r}(b)$,  the integer in~\cref{eq: this number main} is $-1-\len{\mu^{r-j}(c)}$. Comparing the two representations we discussed, we obtain that $\len{\mu^{r-j}(c)}=U_{r-j}$. 
This value is also the value of $\len{\mu^{r-j}(e)}$ for every letter $e\in E_j$ as proven above, so condition~\cref{eq: condition C} is satisfied as expected.

We turn to the converse implication. 
We assume that the map $c\mapsto\len{\mu^\ell(c)}$ is constant over $E_j$ for all suitable values of $j$ and $\ell$ and that  condition~\cref{eq: condition C} is satisfied, and we prove the positionality of the numeration system,  i.e.,  we find suitable sequences of weights. 
We let $U_\ell$ be the constant value imposed on $\mu^\ell$ by the discussed conditions and we also define $V_\ell=\len{\mu^\ell(b)}$ for every $\ell\in\N$.

We now consider an integer $n\in\Z$ and we divide the argument into three different cases. 

\textbf{Case 1}.  
Assume that $n$ is non-negative.
Then~\cref{def: DTNS for Z equiv r mod p} implies that
\begin{equation}\label{eq: rep of n with n positive}
\rep_{u,r}(n)=\ttt{0}\cdot \len{m_{k-1}}\cdots\len{m_0}
\end{equation}
for some $a$-admissible sequence $((m_i,a_i))_{i=0,\ldots,k-1}$ satisfying 
\[
\mu^k(a)_{[0,n-1]}=\mu^{k-1}(m_{k-1})\cdots \mu^0(m_0).
\] 
In particular,  we have that
\begin{equation}\label{eq: n as sum with n positive}
n=\sum_{i=0}^{k-1}\len{\mu^{i}(m_i)}.
\end{equation} 
If $c$ is a letter in $m_i$, we may write $m_ia_i=p_ics_i$ for some words $p_i,s_i$ and we note that the existence of the sequence $(m_{k-1},a_{k-1}),\ldots,(p_ic,(s_i)_0)$ means that $c$ is in $E_{k-i\bmod p}$. 
Since $i+k-i\equiv k\equiv r\bmod p$, we obtain that $\len{\mu^i(c)}=U_i$ for any letter $c$ of $m_i$. 
As a result,  \cref{eq: n as sum with n positive} yields that $n=\sum_{i=0}^{k-1}\len{m_i}U_i$. 
Due to~\cref{eq: rep of n with n positive},  we obtain that the numeration system is positional with weights $(U_i)_{i\in\N}$ as expected.

\textbf{Case 2}.  
Assume that $n$ is negative and different from $-1$.
Again~\cref{def: DTNS for Z equiv r mod p} implies that 
\begin{equation}\label{eq: rep of n with n negative and not -1}
\rep_{u,r}(n)=\ttt{1}\cdot \len{m_{k-1}}\cdots\len{m_0}
\end{equation} 
for some $b$-admissible sequence $((m_i,a_i))_{i=0,\ldots,k-1}$ satisfying 
\[
\mu^k(b)_{[-\len{\mu^k(b)},n-1]}=\mu^{k-1}(m_{k-1})\cdots \mu^0(m_0).
\]

A bit of care is required to prove that all letters in $m_i$ are in $E_{k-i\bmod p}$.
Indeed,  if $c$ is such a letter with $m_ia_i=p_ics_i$ for some words $p_i,s_i$,  then we must verify that either we have $\mu^{k-i-1}(m_{k-1})\cdots\mu^0(p_ic)(s_i)_0\neq \mu^{k-i}(b)$ or we have an equality but $k-i\leq p$ and $\len{\mu^{p-k+i}(s_i)}>1$. 
The only case where the equality can hold is if $p_ic=m_i$ and $s_i=a_i$. Next, remember that the sequence $((m_i,a_i))_{i=0,\ldots,k-1}$ must satisfy $\mu^{p-1}(m_{k-1})\mu^{p-2}(m_{k-2}) \cdots \mu^0(m_{k-p}) a_{k-p} \neq \mu^p(b)$. Thus, the equality can only occur if $k-i<p$. But in this case, it must be that $\len{\mu^{p-k+i}(a_i)}>1$, otherwise we would have 
\begin{align*}
&\mu^{k-i-1}(m_{k-1})\cdots\mu^0(m_i)a_i =  \mu^{k-i}(b) \\
\Rightarrow &\mu^{p-k+i}\left( \mu^{k-i-1}(m_{k-1})\cdots\mu^0(m_i)a_i \right) = \mu^{p}(b)\\
\Rightarrow &\mu^{p-1}(m_{k-1})\cdots \mu^{p-k+i}(m_i) \mu^{p-k+i}(a_i)=\mu^{p}(b)\\
\Rightarrow &\mu^{p-1}(m_{k-1})\mu^{p-2}(m_{k-2}) \cdots \mu^0(m_{k-p}) a_{k-p} = \mu^p(b)
\end{align*}
as $\mu^{p-k+i-1}(m_{i-1})\cdots a_{k-p}$ is a prefix of length at least $1$ of $\mu^{p-k+i}(a_i)$, which has length $1$. 
The last equality we obtain is a contradiction,  so $k-i<p$ and $\len{\mu^{p-k+i}(a_i)}>1$.
Thus,  we have proven that $c$ belongs to $E_{k-i\bmod p}$ in all cases. From there, we get that 
\[
n
=-\len{\mu^k(b)}+\sum_{i=0}^{k-1}\len{\mu^i(m_i)}
=-\len{\mu^k(b)}+\sum_{i=0}^{k-1}\len{m_i}U_i,
\]
where the second equality holds because all the letters of $m_i$ are in $E_{k-i\bmod p}$. 
Note that $\left| \mu^k(b)_{[-\len{\mu^k(b)},n-1]}\right| = n+|\mu^k(b)| $ even in the case where $n-1<-\len{\mu^k(b)}$, due to the condition $-\len{\mu^k(b)}\leq n$ (putting the two inequalities together gives $-\len{\mu^k(b)}=n$).
Due to~\cref{eq: rep of n with n negative and not -1}, we get that the numeration system is positional with sequences $U,V$ of weights as defined above.

\textbf{Case 3}.
Only the case where $n=-1$ remains. 
Here, we have a sequence $((m_i,a_i))_{i=0,\ldots,r-1}$ with $\mu^{r-1}(m_{r-1})\cdots\mu^0(m_0)a_0=\mu^r(b)$. 
All the letters of $m_i$ except the last one are guaranteed to be in $E_{k-i\bmod p}$ as in the previous case.
For the last letter of $m_i$, either it is in $E_{k-i\bmod p}$ or condition~\cref{eq: condition C} applies. In any case, we have $\len{\mu^i(c)}=U_i$,  and the rest of the argument is as for Case 2.

In all three cases, we have shown the numeration system to be positional: for every $i\ge 0$,  the weights $U_i$ are defined by the constant value of $c\mapsto \len{\mu^i(c)}$ over $E_{r-i\bmod p}$, while the weights $V_i$ are equal to $\len{\mu^i(b)}$. This concludes the proof of the main result.
\end{proof}

\begin{remark}
Note that replacing the substitution by one of its powers may lead to the loss of positionality of the corresponding numeration system.
For instance,  consider the substitution $\mu \colon a\mapsto ccd,  b \mapsto cd,  c \mapsto ab,  d\mapsto a$ from~\cref{ex: phi and phi-squared} and its square renamed $\nu\colon a \mapsto ab ab a, b\mapsto ab a, c\mapsto ccd cd,  d\mapsto ccd$. 
From before,  we already know that the Dumont--Thomas numeration system associated with $\mu$ and the periodic point of $\mu$ starting with $a$ of period $2$ is positional.
However,  that associated with $\nu$ and its fixed point starting with $a$ is not.
Indeed,  drawing the first two levels of the tree $\mathcal{T}_{\nu,a}$ we obtain that the representation of $5$ is $\ttt{10}$ and that of $8$ is $\ttt{20}$,  making it impossible to find a suitable evaluation map for the numeration system to be positional. This is as expected from our~\cref{thm: main} as $a$ and $b$ are both in $E_0$ but have images of different lengths.
\end{remark}

\subsection{Particular cases}
\label{sec:particularizations}

In this section, we highlight some general cases where the technicalities of~\cref{sec:mainresult} do not occur, leading to results that are more concise and legible. We also discuss possible simplifications of the substitution at play, as well as a parallel to Bertrand numeration systems. 

From now on, we assume that the alphabet $A$ of the substitution $\mu$ is minimal,  i.e,  all the letters in $A$ are present in $\mu^n(b|a)$ for some $n$,  where $b|a$ designates the seed of the periodic point $u$ of $\mu$. 
Given a substitution $\mu$, we say that a letter $c$ is \emph{non-final} if there exist $d\in A$, $x\in A^*$,  and $y\in A^+$ such that $\mu(d)=xcy$. 
We let $E_{\mu}$ denote the set of non-final letters of $\mu$. 
When there is no ambiguity,  we drop the subscript. 

We start with the case where the domain $\D$ is equal to $\N$ and the substitution $\mu$ has a fixed point. 
In this case,  the period $p$ is equal to $1$ and there is only one sequence of weights, with no particular care given to the most significant digit in a representation.

\begin{corollary}
\label{cor: fixed point case}
Let $\mu\colon A^*\to A^*$ be a substitution and $u=u_0u_1\cdots$ be a right-infinite fixed point of $\mu$ with growing seed $a$. 
Then the Dumont--Thomas numeration system (for $\N$) associated with $\mu$, $u$ and $r=0$ is positional if and only if the map $c\mapsto \len{\mu^{\ell}(c)}$ is constant over $E_\mu$ for every $\ell$,  in which case the sequence of weights is equal to $U_\ell=\len{\mu^\ell(a)}$ for every $\ell\ge 0$.
\end{corollary}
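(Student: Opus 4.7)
My plan is to deduce this corollary as a direct specialization of \cref{thm: main} (or, equivalently, of its proof), exploiting the simplifications afforded by $p = 1$ and by the domain being $\N$. With $p = 1$ there is a unique residue class ($j = 0$), and the relevant tree is simply $\mathcal{T}_{\mu, a}$. Moreover, \cref{def:C-2} and its condition \cref{eq: condition C} are vacuous here, as there is no left subtree and no column $-2$ to monitor. Thus the conclusion of \cref{thm: main} collapses to the statement that the system is positional if and only if $c \mapsto \len{\mu^\ell(c)}$ is constant on $E_0$ for every $\ell \ge 0$.

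The core step is then to identify the set $E_0$ with $E_\mu$. Unfolding \cref{def:E_j}, a letter $c$ lies in $E_0$ exactly when it appears as the last letter of $m_0$ for some $a$-admissible sequence $((m_i, a_i))_{i = 0, \ldots, k-1}$, or equivalently when $c$ labels a node of $\mathcal{T}_{\mu, a}$ that has a younger sibling. Since admissibility forces $m_0 a_0$ to be a prefix of $\mu(a_1)$, the letter $c$ is immediately followed by $a_0$ in $\mu(a_1)$, yielding $c \in E_\mu$. Conversely, if $c$ is non-final with $\mu(d) = xcy$ and $y$ non-empty, then by minimality of $A$ there is an $a$-admissible sequence ending at a node labeled $d$; prepending $(xc, y_0)$ to this sequence witnesses $c \in E_0$. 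Hence $E_0 = E_\mu$.

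To pin down the explicit weight, I would observe that $a$ itself belongs to $E_\mu$: since $u$ is a fixed point with $u_0 = a$, the word $\mu(a)$ begins with $a$, and growth of $a$ forces $\len{\mu(a)} \ge 2$, so $a$ has a younger sibling and is non-final. Applying the weight formula of \cref{thm: main} with $c = a$ yields $U_\ell = \len{\mu^\ell(a)}$ for every $\ell \ge 0$.

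The only genuine caveat is that the corollary is phrased over $\N$ while \cref{thm: main} is stated over $\Z$. I would bridge this either by rerunning Case~1 of the proof of \cref{thm: main} verbatim — it relies only on the right-infinite and $a$-admissible portion of the argument — or by noting that the representations of non-negative integers in the $\Z$-system, after stripping the leading $\ttt{0}$, coincide with those produced by \cref{def: DTNS for N}. I expect the identification $E_0 = E_\mu$ to be the most delicate bookkeeping step, but it amounts to little more than unwinding the relevant definitions.
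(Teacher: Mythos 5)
Your proposal is correct and follows essentially the same route as the paper: observe that \cref{def:C-2} is vacuous over $\N$, identify $E_0$ with $E_\mu$ (which you spell out in more detail than the paper, correctly using minimality of $A$), note $a\in E_\mu$ because $\mu(a)=ay$ with $y$ nonempty to pin down $U_\ell=\len{\mu^\ell(a)}$, and rerun the proof of \cref{thm: main} restricted to the domain $\N$. Just prefer your first bridging option (rerunning the $a$-admissible part of the proof over $\N$, in both directions) over the second, since a two-sided periodic point with growing seed need not exist for an arbitrary $\mu$, so the statement of \cref{thm: main} over $\Z$ cannot always be invoked directly.
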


\begin{proof}
Note that since $a$ is the growing seed of a fixed point of $\mu$,  we must have $\mu(a)=ay$ for some non-empty word $y$, so $a\in E_\mu$. 
Note also that since the domain is $\N$, \cref{def:C-2} does not apply. 
It then suffices to apply the proof of~\cref{thm: main} restricted to the domain $\N$,  taking into account that,  since $p=1$ and $r=0$ in our case,  the set $E_\mu$ is equal to the set $E_0$ of~\cref{thm: main}, which is the only set for which we have  conditions to check.
\end{proof}

Another special case is that of primitive substitutions,  even on the complete domain $\D=\Z$. 
Recall that the definition of primitive substitutions was given in~\cref{sec:prelim}, where we mentioned that if $\mu$ is primitive then there exists $k$ such that $a\in\mu^\ell(b)$ for all $a,b\in A$ and $\ell\geq k$.

\begin{corollary}
\label{cor: primitive case}
Let $\mu \colon A^* \to A^*$ be a primitive substitution and let $u\in\Per_{\Z}(\mu)$ be a two-sided periodic point with growing seed $b|a$ and period $p\ge 1$. 
The Dumont--Thomas complement numeration system associated with $\mu$,  $u$,  and $r$ is positional if and only if the map $c\mapsto \len{\mu^{\ell}(c)}$ is constant over $E_\mu$ for every $\ell\ge 0$. In this case,  for every $\ell\ge 0$,  $U_\ell$ is the constant value of $c\mapsto \len{\mu^{\ell}(c)}$ over $E_\mu$ and $V_\ell=\len{\mu^\ell(b)}$.
\end{corollary}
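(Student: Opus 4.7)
The plan is to derive this corollary directly from Theorem~\ref{thm: main} by showing that, under primitivity, the sets $E_j$ introduced in Definition~\ref{def:E_j} all coincide with $E_\mu$, and that the side condition~\cref{eq: condition C} from Definition~\ref{def:C-2} is automatically verified. Once these two reductions are carried out, the statement of the corollary will follow word for word from the main theorem.

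I would first establish $E_j \subseteq E_\mu$, which actually holds for every substitution, not just primitive ones. Indeed, if $c \in E_j$ with witnessing admissible sequence $((m_i,a_i))_{i=0,\ldots,k-1}$, then $c$ is the last letter of $m_0$ and $m_0 a_0$ is a prefix of $\mu(a_1)$, so $c$ is followed by the letter $a_0$ in $\mu(a_1)$ and therefore lies in $E_\mu$. For the reverse inclusion $E_\mu \subseteq E_j$, I would invoke primitivity. Given $c \in E_\mu$, fix $d \in A$, $x \in A^*$, and $y \in A^+$ with $\mu(d) = xcy$. Primitivity supplies an integer $k_0$ such that $d$ occurs in $\mu^\ell(a)$ for every $\ell \ge k_0$; choose $k \ge k_0 + 1$ with $k \equiv j \bmod p$, locate an occurrence of $d$ in $\mu^{k-1}(a)$, and read the corresponding path in the right subtree of $\mathcal{T}_{\mu,b|a}$ as an $a$-admissible sequence $((m_i,a_i))_{i=1,\ldots,k-1}$ ending at $a_1 = d$. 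Prepending $(m_0,a_0) := (xc, y_0)$ then yields an $a$-admissible sequence witnessing $c \in E_j$; no extra condition is required because the fourth bullet of Definition~\ref{def:E_j} is vacuous in the $a$-admissible case.

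Next, I would check that the side condition~\cref{eq: condition C} is automatically satisfied. When it is imposed on a letter $c$, the letters $c$ and $d$ of columns $-2$ and $-1$ on level $j$ share a parent $e$ with $\mu(e)$ of the form $\cdots c d \cdots$, so $c$ is non-final, i.e., $c \in E_\mu$. By the equality $E_\mu = E_j$ just established, $c$ also lies in $E_j$. Hence constancy of $c \mapsto \len{\mu^\ell(c)}$ over $E_\mu$ already yields $\len{\mu^{r-j}(c)} = \len{\mu^{r-j}(e)}$ for every $e \in E_j$, which is exactly~\cref{eq: condition C}.

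Combining these two observations, the hypotheses of Theorem~\ref{thm: main} collapse, in the primitive case, to the single requirement that $c \mapsto \len{\mu^\ell(c)}$ be constant over $E_\mu$ for every $\ell \ge 0$, and the formulas $U_\ell = \len{\mu^\ell(c)}$ for any $c \in E_\mu$ and $V_\ell = \len{\mu^\ell(b)}$ are inherited verbatim from the main theorem. The main (and essentially only) technical point will be the primitivity argument ensuring that the level at which $d$ appears can be chosen in any prescribed residue class modulo $p$; this is immediate, since once $d$ occurs in $\mu^\ell(a)$ for every $\ell \ge k_0$, it occurs in particular for every sufficiently large $\ell$ lying in a fixed arithmetic progression of step $p$.
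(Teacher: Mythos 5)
Your proposal is correct and follows essentially the same route as the paper: prove $E_j\subseteq E_\mu$ in general and $E_\mu\subseteq E_j$ via primitivity (choosing the level of an occurrence of the parent letter in the right residue class mod $p$), neutralize \cref{def:C-2}, and then apply \cref{thm: main}. The only cosmetic differences are that the paper dispatches \cref{def:C-2} by noting that $\mathcal{T}_{\mu,b}$ occurs as a subtree of $\mathcal{T}_{\mu,a}$ (so a column $-2$ letter already has a younger sibling at some non-negative column), whereas you observe that the column $-2$ letter is non-final and hence lies in $E_\mu=E_j$ --- both work, though you should also apply the same observation to the branch of \cref{def:C-2} that adds a letter to $E_j$ (so that the modified $E_j$ still equals $E_\mu$) and, as the paper does, note that $E_\mu\neq\emptyset$ so that the stated weight formula is well defined.
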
 

\begin{proof}
Let $k$ be an integer such that,  for all letters $c,d \in A$, for all $\ell>k$,  $c$ appears in $\mu^\ell(d)$. 
We then know that every letter appears at every sufficiently large level in the tree $\mathcal{T}_{\mu,a}$. 
In particular, $b$ appears in $\mu^k(a)$, so $\mathcal{T}_{\mu,b}$ appears as a subtree of $\mathcal{T}_{\mu,a}$. Therefore, any letter that appears with a younger sibling on column $-2$ also appears with a younger sibling on some column with non-negative index.
As a result, \cref{def:C-2} can be skipped when determining the sets $E_j$ and condition~\cref{eq: condition C} is never relevantly added to any letter.

Next,  we show that,  for every $j \in \{0,\ldots,p-1\}$,  $E_j=E_\mu$.
Fix some $j \in \{0,\ldots,p-1\}$.
As the inclusion $E_j \subseteq E_\mu$ is clear by~\cref{def:E_j}  we show the other one.
If $c \in E_\mu$,  there exist $d\in A$, $x\in A^*$,  and $y\in A^+$ such that $\mu(d)=xcy$. 
Since $\mu$ is primitive,  $d$ appears in $\mu^{j-1+\ell p}(a)$ for some sufficiently large $\ell$,  so $c$ appears with a younger sibling at some level $j+\ell p$ in the tree $\mathcal{T}_{\mu,b|a}$.
This implies that $c \in E_j$,  as desired.

Finally,  notice that since $a$ is a growing letter of $\mu$, at least one letter of $A$ must have an image by $\mu$ that is of length at least $2$,  thus $E_\mu$ is nonempty.

Now that we have shown that $E_j=E_\mu\neq\emptyset$ for every $j\in\{0,\ldots,p-1\}$ and that condition~\cref{eq: condition C} is not relevant to any letter, applying~\cref{thm: main} leads to the statement.
\end{proof}

The substitutions that generate the positional Dumont--Thomas numeration systems discussed in the two corollaries above turn out to have a special form as we will see. 
This form lends itself well to simplifications and will allow us to link these positional numeration systems to some known families of numeration systems. 
The following lemma shows that these substitutions are equivalent (in the sense that they generate the same Dumont--Thomas numeration system) to a substitution that contains only one non-final letter.

\begin{lemma}
\label{lem:simplification-fabrelike}
Let $\mu \colon A^* \to A^*$ be a substitution such that the map $c\mapsto \len{\mu^\ell (c)}$ is constant over $E_{\mu}$ for every integer $\ell\ge 0$. 
Then there exist an alphabet $B\subseteq A$,  a substitution $\nu \colon B^* \to B^*$ such that $\len{E_{\nu}}=1$ and some letters $a',b'\in B$ such that the trees $\mathcal{T}_{\mu,b|a}$ and $\mathcal{T}_{\nu,b'|a'}$ differ only by their labeling.
\end{lemma}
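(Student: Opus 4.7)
The plan is to introduce an equivalence relation $\sim$ on $A$ that identifies two letters whose subtrees $\mathcal{T}_{\mu,c}$ and $\mathcal{T}_{\mu,c'}$ are isomorphic as plane trees, to show that under the hypothesis $\sim$ collapses all of $E_\mu$ into a single class, and then to pass to the quotient substitution. Here $c \sim c'$ is characterized by $\len{\mu(c)} = \len{\mu(c')}$ together with $\mu(c)_i \sim \mu(c')_i$ for every index $i$. Granting the collapse, I would fix a representative $e \in E_\mu$ of its class, choose a system of $\sim$-class representatives $B \subseteq A$ containing $e$, and define $\nu \colon B^* \to B^*$ by $\nu(c) = \phi(\mu(c))$, where $\phi \colon A \to B$ sends each letter to its chosen representative.

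The heart of the argument is showing that $\mathcal{T}_{\mu,c} \cong \mathcal{T}_{\mu,c'}$ as plane trees for all $c, c' \in E_\mu$. I would proceed by induction on $\ell \ge 0$ on the stronger statement: for every position $j$ at level $\ell$, the labels $d := \mu^\ell(c)_j$ and $d' := \mu^\ell(c')_j$ have identical length profiles, that is, $\len{\mu^r(d)} = \len{\mu^r(d')}$ for every $r \ge 0$. The base case $\ell = 0$ is exactly the hypothesis. In the inductive step, the equality of length profiles at level $\ell$ forces the block structures of $\mu^{\ell+1}(c)$ and $\mu^{\ell+1}(c')$ to agree, so each level-$(\ell+1)$ position corresponds to the same pair $(j,i)$. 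At non-final positions within a block, the relevant letters belong to $E_\mu$ and the hypothesis supplies the length-profile equality. At the final position, the identity
\[
\len{\mu^r(f_d)} = \len{\mu^{r+1}(d)} - (\len{\mu(d)} - 1)\len{\mu^r(e)}
\]
holds because the first $\len{\mu(d)} - 1$ letters of $\mu(d)$ are non-final and therefore in $E_\mu$; since the right-hand side depends only on $r$ and on the length profile of $d$, the letters $f_d$ and $f_{d'}$ must share length profiles, closing the induction.

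The remaining verifications are then routine. The set $E_\nu$ consists of $\phi$-images of non-final letters of $\mu$, all of which lie in $E_\mu$ and so map to $e$, so $\len{E_\nu} = 1$. The compatibility $\phi(\mu(c)) = \phi(\mu(\phi(c)))$, which follows from $c \sim \phi(c)$ position by position, ensures that relabeling each node of $\mathcal{T}_{\mu, b|a}$ via $\phi$ yields exactly $\mathcal{T}_{\nu, \phi(b)|\phi(a)}$. Taking $a' = \phi(a)$ and $b' = \phi(b)$ completes the construction. The main obstacle is the final-position step of the induction: a naive collapse using only the partition $E_\mu \sqcup F_\mu$ can fail when the last letters $f_c$ for $c \in E_\mu$ spread across both parts, and it is the full strength of the hypothesis at every level $r$, propagated through the last-letter map, that rescues the argument.
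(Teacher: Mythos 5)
Your proposal is correct and takes essentially the same route as the paper: both arguments show that the trees rooted at any two letters of $E_{\mu}$ are isomorphic as plane trees by an induction over levels, using the constancy of $c\mapsto\len{\mu^\ell(c)}$ on $E_{\mu}$ to handle the non-final letters of each image and a length-subtraction identity to pin down the last letter, and then merge letters to produce $\nu$ with a single non-final letter. Your per-node ``length profile'' invariant and the quotient by plane-tree bisimulation are a slightly stronger bookkeeping of the paper's induction on truncated trees $\mathcal{T}_{\mu,c}^{\leq k}$, which instead controls only the rightmost-spine letters at each step and then simply replaces every letter of $E_{\mu}$ by one chosen representative.
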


\begin{proof}
Before showing that the statement holds,  we start with some notation.
For an integer $k\ge 0$ and a letter $a\in A$,  we let $\mathcal{T}_{\mu,a}^{\leq k}$ denote the first $k$ levels of the tree $\mathcal{T}_{\mu,a}$ (agreeing that the root is on level $0$). 
Additionally, we extend our definition of trees associated with substitutions to forests associated with substitutions: if $w=w_1\ldots w_n$ is a word over $A$,  then $\mathcal{T}_{\mu,w}$ is the ordered forest where the first tree is $\mathcal{T}_{\mu,w_1}$, the second tree is $\mathcal{T}_{\mu,w_2}$, and so on. 
We define similarly $\mathcal{T}_{\mu,w}^{\leq k}$ for any integer $k\ge 0$.

Now we prove the following claim: for every integer $k\ge 0$ and for every pair of letters $c,d \in E_{\mu}$, the trees $\mathcal{T}_{\mu,c}^{\leq k}$ and $\mathcal{T}_{\mu,d}^{\leq k}$ differ only by their labeling. Proving this for every $k$ will also show it for the entire trees.
We proceed by induction on $k\ge 0$.
The case $k=0$ is trivial as we start with letters.
The case $k=1$ is obtained directly from the hypothesis for $\ell=1$.
Indeed,  $c$ and $d$ must have the same number of children as they are both in $E_\mu$ by assumption.
Now, assume that the claim holds for $k$ and let us prove it for $k+1$. 
We inductively define $\mu(c)=x_1c_1$ and $\mu(c_i)=x_{i+1}c_{i+1}$ for every $i\ge 1$,  where $x_i\in A^*$ and $c_i\in A$ for every $i\ge 1$ (see~\cref{fig:simplification-fabrelike-tc=td}).
Similarly,  we define $\mu(d)=y_1d_1$ and $\mu(d_i)=y_{i+1}d_{i+1}$ for every $i\ge 1$ with the same constraints on $y_i$ and $d_i$.
The induction hypothesis ensures that $\mathcal{T}_{\mu,c}^{\leq k}$ and $\mathcal{T}_{\mu,d}^{\leq k}$ only differ by their labeling. 
Still from the induction hypothesis,  the same is true for $\mathcal{T}_{\mu,x_1}^{\leq k}$ and $\mathcal{T}_{\mu,y_1}^{\leq k}$ and, more generally,  for $\mathcal{T}_{\mu,x_i}^{\leq k+1-i}$ and $\mathcal{T}_{\mu,y_i}^{\leq k+1-i}$ with $i \in \{1, \ldots, k\}$ (letters of the words $x_i,y_i$ are non-final by definition). See~\cref{fig:simplification-fabrelike-tc=td} for a more visual explanation.

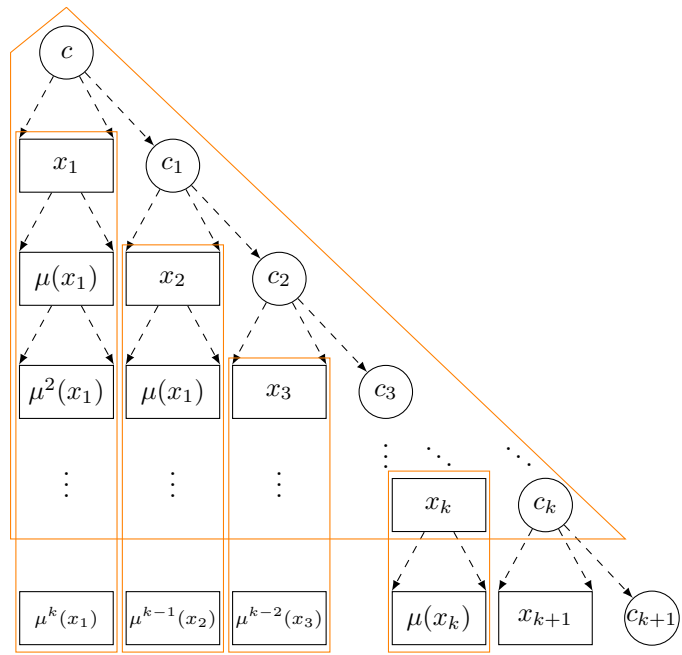
\begin{figure}
\begin{center}
\begin{tikzpicture}[xscale=.70, yscale=1.5, >=latex,
nodeS/.style={rectangle,draw,minimum width=35pt, minimum height=20pt},
nodeC/.style={circle,draw, minimum size=20pt}]
    \node[nodeC] (c) at (0,0)  {$c$};
    
    \node[nodeS] (x1) at (0,-1) {$x_1$};
    \draw[->,dashed] (c) -- (x1.north west);
    \draw[->,dashed] (c) -- (x1.north east);
    
    \node[nodeC] (c1) at (2,-1)  {$c_1$};
    \draw[->,dashed] (c) -- (c1);

    \node[nodeS] (mx1) at (0,-2) {$\mu(x_1)$};
    \draw[->,dashed] (x1) -- (mx1.north west);
    \draw[->,dashed] (x1) -- (mx1.north east);
    
    \node[nodeS] (x2) at (2,-2) {$x_2$};
    \draw[->,dashed] (c1) -- (x2.north west);
    \draw[->,dashed] (c1) -- (x2.north east);
    
    \node[nodeC] (c2) at (4,-2)  {$c_2$};
    \draw[->,dashed] (c1) -- (c2);

    \node[nodeS] (m2x1) at (0,-3) {$\mu^2(x_1)$};
    \draw[->,dashed] (mx1) -- (m2x1.north west);
    \draw[->,dashed] (mx1) -- (m2x1.north east);
    
    \node[nodeS] (mx2) at (2,-3) {$\mu(x_1)$};
    \draw[->,dashed] (x2) -- (mx2.north west);
    \draw[->,dashed] (x2) -- (mx2.north east);
    
    \node[nodeS] (x3) at (4,-3) {$x_3$};
    \draw[->,dashed] (c2) -- (x3.north west);
    \draw[->,dashed] (c2) -- (x3.north east);
    
    \node[nodeC] (c3) at (6,-3)  {$c_3$};
    \draw[->,dashed] (c2) -- (c3);

    \node at (0,-3.75) {$\vdots$};
    \node at (2,-3.75) {$\vdots$};
    \node at (4,-3.75) {$\vdots$};
    \node at (6,-3.5) {$\vdots$};
    \node at (7,-3.5) {$\ddots$};
    \node at (8.5,-3.5) {$\ddots$};

    \node[nodeS] (xk) at (7,-4) {$x_k$};
    \node[nodeC] (ck) at (9,-4)  {$c_k$};

    \node[nodeS,inner sep=0] (mkx1) at (0,-5) {\scriptsize $\mu^{k}(x_1)$};
    \node[nodeS,inner sep=0] (mkm1x2) at (2,-5) {\scriptsize $\mu^{k-1}(x_2)$};
    \node[nodeS,inner sep=0] (mkm2x3) at (4,-5) {\scriptsize $\mu^{k-2}(x_3)$};
    
    \node[nodeS] (mxk) at (7,-5) {$\mu(x_k)$};
    \draw[->,dashed] (xk) -- (mxk.north west);
    \draw[->,dashed] (xk) -- (mxk.north east);
    
    \node[nodeS] (xkp1) at (9,-5) {$x_{k+1}$};
    \draw[->,dashed] (ck) -- (xkp1.north west);
    \draw[->,dashed] (ck) -- (xkp1.north east);
    
    \node[nodeC,inner sep=0] (ckp1) at (11,-5)  {$c_{k+1}$};
    \draw[->,dashed] (ck) -- (ckp1);

    \draw[orange] (0,0.4)--(10.5,-4.3)--(-1.05,-4.3)--(-1.05,0)--(0,0.4);
    \draw[orange] (-0.95,-5.3) rectangle (0.95,-0.7);
    \draw[orange] (1.05,-5.3) rectangle (2.95,-1.7);
    \draw[orange] (3.05,-5.3) rectangle (4.95,-2.7);
    \draw[orange] (6.05,-5.3) rectangle (7.95,-3.7);

\end{tikzpicture}
\end{center}
\caption{In the proof of~\cref{lem:simplification-fabrelike},  given a letter $c$,  we inductively define $\mu(c)=x_1c_1$ and $\mu(c_i)=x_{i+1}c_{i+1}$ for every $i\ge 1$,  where $x_i$ is a word and $c_i$ is a letter for every $i\ge 1$.
We draw the tree $\mathcal{T}_{\mu,c}^{\leq k+1}$ as defined in the proof of~\cref{lem:simplification-fabrelike}.
Black rectangles correspond to words (note that we do not give information on the length here) and black circles to letters.  The induction hypothesis tells us that,  up to labeling,  every color-framed subtree is equal to its corresponding subtree in $\mathcal{T}_{\mu,d}^{\leq k+1}$.}
\label{fig:simplification-fabrelike-tc=td}
\end{figure}

As a result,  to conclude the proof of the claim,  we only need to check that $c_k$ and $d_k$ have the same number of children,  i.e.,  $\len{\mu(c_k)}=\len{\mu(d_k)}$.
Since
\[
\len{\mu^{k+1}(c)} = \sum_{i=1}^{k} \len{\mu^{k+1-i}(x_i)} + \len{\mu(c_k)} \quad \text{ and } \quad \len{\mu^{k+1}(d)} = \sum_{i=1}^{k} \len{\mu^{k+1-i}(y_i)} + \len{\mu(d_k)},
\]
we have
\[
\len{\mu(c_k)} 
= \len{\mu^{k+1}(c)} - \sum_{i=1}^{k} \len{\mu^{k+1-i}(x_i)}
= \len{\mu^{k+1}(d)} - \sum_{i=1}^{k} \len{\mu^{k+1-i}(y_i)} 
= \len{\mu(d_k)},
\]
where the second equality results from our hypothesis (all the letters in $x_i$ and $y_i$ belong to $E_\mu$, as do $c$ and $d$). 
Thus the claim is proven.

Now that we have proven that $\mathcal{T}_{\mu,c}$ and $\mathcal{T}_{\mu,d}$ only differ by their labeling, we may replace every instance of the letter $d$ in an image of $\mu$ by the letter $c$ and this will only change the labeling of the tree $\mathcal{T}_{\mu,b|a}$, not its shape. 
If we proceed like this for every letter $d$ in $E_\mu\setminus\{c\}$ and then remove any such superfluous letter from the alphabet $A$,  we will have constructed the desired substitution $\nu$.
\end{proof}

\begin{example}
Consider the primitive substitution $\mu\colon a\mapsto ab,  b\mapsto ba$ (often referred to as the Thue--Morse substitution). 
We note that both $a,b$ are non-final letters and $\len{\mu^\ell(c)}=2^\ell$ for $c\in\{a,b\}$ and for every $\ell\ge 0$.
Applying our result gives the substitution $\nu\colon a \mapsto a^2$. The corresponding Dumont--Thomas numeration system (over $\N$) is the usual binary system.
\end{example}

As a consequence,  when considering positional Dumont--Thomas numeration systems associated with either primitive substitutions or fixed points on the domain $\N$ (namely,  the assumptions of~\cref{cor: fixed point case,cor: primitive case}) we may look only at substitutions having one single non-final letter $e$ by~\cref{lem:simplification-fabrelike}. 
In such a substitution, the image of a letter is determined by the number of repetitions of the non-final letter $e$ and the choice of final letter. 

If we let $a$ be the seed of the periodic point $u$, it must be that $\mu^\ell(a)$ starts with $e$ for some $\ell$. In order for $u$ to be a periodic point, we must have either $e=a$ (Case 1) or a chain of letters that all have images of length $1$,  say $e\mapsto b_1\mapsto b_2 \mapsto \cdots \mapsto b_k\mapsto a$ (Case 2).  
In all other cases, no iterated image of $e$ starts with $a$, and $u$ cannot be a periodic point as a result.

In what follows, we will try to link these substitutions to established literature on numeration systems. However, note that in most common systems, representations may take any length, which corresponds in our case to the substitution having a fixed point, and therefore to Case 1 above.
As a result, we will solely focus on this case where $e=a$ is both the seed of the periodic point and the only non-final letter. In this case, with the extra condition to operate on the minimal alphabet, the substitution is equivalent to one of the form 
\begin{equation}
\label{eq: Fabre like substitution}
\mu : a_1 \mapsto a_1^{d_1}a_2, a_2\mapsto a_1^{d_2} a_3, \ldots,  a_n \mapsto a_1^{d_n}a_k,
\end{equation}
where $n$ is a non-negative integer, $\{a_1,\ldots, a_n\}$ is the alphabet of the substitution, $k\in\{1,\ldots,n\}$, $d_1>0$ and $d_i$ is a non-negative integer for every $i\in\{1,\ldots, n\}$. 
An example of such a substitution is the Fibonacci (resp.,  Tribonacci) substitution given by $\varphi\colon a\mapsto ab,  b\mapsto a$ (resp.,  $\tau \colon a \mapsto ab,  b \mapsto ac,  c \mapsto a$),  for which $n=2$, $a_1=a$,  $a_2=b$,  $d_1=1$,  $d_2=0$, and $k=1$ (resp.,  $n=3$,  $a_1=a$,  $a_2=b$,  $a_3=c$, $d_1=d_2=1$,  $d_3=0$,  and $k=1$).
See also their generalization to \emph{generic $n$-bonacci} substitutions in~\cite[Ex 2.11]{Rigo-2014-2}.

The similarity with the substitutions studied by Fabre in~\cite{Fabre-1995} is striking,  so we will call \emph{Fabre-like} the substitutions of the form~\cref{eq: Fabre like substitution}. We will now study this particular class of substitutions in additional details.
First, we note the following property.

\begin{proposition}
\label{pro: unique decomposition}
Let $\mu$ and $\mu'$ be Fabre-like substitutions and consider their respective fixed points $u$ and $u'$ with respective seeds $a$ and $a'$. 
If the Dumont--Thomas numeration systems over $\N$ corresponding to $u$ and $u'$ respectively and $r=0$ have the same sequence of weights,  then every natural number has the same representation in both numeration systems.
\end{proposition}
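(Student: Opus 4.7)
My plan is to show that the Dumont--Thomas representation of any $n\in\N$ in a Fabre-like substitution depends only on the weight sequence $(U_\ell)_{\ell\ge 0}$, which immediately yields the claim. Given a Fabre-like substitution $\mu\colon a_1\mapsto a_1^{d_1}a_2,\ldots,a_n\mapsto a_1^{d_n}a_k$, I would first introduce the eventually periodic sequence $(\delta_j)_{j\ge 1}$ defined by $\delta_j=d_j$ for $j\le n$ and extended periodically through the jump (so $\delta_{n+1}=d_k,\delta_{n+2}=d_{k+1}$, and so on). This encodes the ``digits of the substitution'' as an infinite word, independently of the exact alphabet size, and the lengths $L_{j,\ell}:=|\mu^\ell(a_j)|$ (extended accordingly to all $j\ge 1$) satisfy $L_{j,0}=1$ and $L_{j,\ell}=\delta_j U_{\ell-1}+L_{j+1,\ell-1}$. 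Reading this recurrence as a way to solve for $(\delta_j)$ and $(L_{j,\ell})$ in terms of $(U_\ell)$ alone, I would inductively extract $\delta_1=U_1-1$, then $L_{2,\ell}=U_{\ell+1}-\delta_1 U_\ell$ for every $\ell$, then $\delta_2=L_{2,1}-1$, then $L_{3,\ell}=L_{2,\ell+1}-\delta_2 U_\ell$, and so on. In particular, $\mu$ and $\mu'$ must produce the same sequence $(\delta_j)$.

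Next, I would describe $\rep_\mu(n)$ as the output of the following algorithm depending only on $(U_\ell)$ and $(\delta_j)$. The length $\ell=\ell(n)$ of $\rep_\mu(n)$ is the smallest index with $U_\ell>n$, which is determined by the weights. Maintain a pointer $j$ into $(\delta_j)$ (initially $j=1$) and a running remainder $c$ (initially $c=n$). For $i$ from $\ell-1$ down to $0$: if $c\ge \delta_j U_i$, set $w_i:=\delta_j$, update $c\leftarrow c-\delta_j U_i$ and $j\leftarrow j+1$; otherwise set $w_i:=\lfloor c/U_i\rfloor$, update $c\leftarrow c-w_i U_i$ and reset $j\leftarrow 1$. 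The correctness of this algorithm would follow from the decomposition $\mu^{i+1}(a_j)=\mu^i(a_1)^{\delta_j}\mu^i(a_{j+1})$: the column we aim at is either in one of the $\delta_j$ blocks $\mu^i(a_1)$ (in which case the DT path has digit $\lfloor c/U_i\rfloor<\delta_j$ and the next letter visited is $a_1$) or in the final block $\mu^i(a_{j+1})$ (in which case the digit is $\delta_j$ and the next letter is $a_{j+1}$). An easy induction on the depth in the tree $\mathcal{T}_{\mu,a_1}$ then verifies that this output agrees with $\rep_\mu(n)$.

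Since the algorithm depends only on $(U_\ell)$ and $(\delta_j)$, and both are identical for $\mu$ and $\mu'$ by the first step, I would conclude that $\rep_\mu(n)=\rep_{\mu'}(n)$ for every $n\in\N$. The main obstacle I anticipate is the fact that $\mu$ and $\mu'$ may live on alphabets of different sizes: for instance, the same periodic $(\delta_j)$ can arise from substitutions whose cycle lengths differ by an integer factor, or with different choices of the jump letter $a_k$ but equivalent periodic tails. I bypass this difficulty by never identifying concrete letters across the two substitutions, and instead working only with the abstract position $j$ in the infinite sequence $(\delta_j)$, which plays the same role in both systems and is canonically determined by $(U_\ell)$. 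A secondary technical point is verifying that the ``jump'' in $\mu$ (respectively $\mu'$) does not disrupt the algorithm; this is handled by the observation that the eventual periodicity of $(\delta_j)$ is compatible with the way the pointer $j$ evolves, so the algorithm can be run indefinitely without reference to the specific value of $k$.
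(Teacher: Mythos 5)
Your proposal is correct, and it reaches the conclusion by a genuinely more explicit route than the paper. The paper's (sketched) proof first notes that equal weight sequences force $\len{\mu^\ell(a)}=\len{\mu'^{\ell}(a')}$ for all $\ell$, and then repeats the level-by-level induction of \cref{lem:simplification-fabrelike} to show that the truncated trees $\mathcal{T}_{\mu,a}^{\leq k}$ and $\mathcal{T}_{\mu',a'}^{\leq k}$ coincide up to relabeling, which immediately gives equality of representations. You instead exploit the chain structure of a Fabre-like substitution: the recurrence $L_{j,0}=1$, $L_{j,\ell}=\delta_j U_{\ell-1}+L_{j+1,\ell-1}$ lets you solve inductively for the eventually periodic digit word $(\delta_j)_{j\ge 1}$ (which is exactly the word $d_1d_2\cdots$ used in \cref{pro: equivalence with Parry condition}) and for all the lengths $L_{j,\ell}$ purely in terms of the weights, and your digit-extraction algorithm, justified by the decomposition $\mu^{i+1}(b_j)=\mu^i(a_1)^{\delta_j}\mu^i(b_{j+1})$ along the chain and by the uniqueness of the path to column $n$ at the first level where that column exists, computes $\rep_{u,0}(n)$ from $(U_\ell)$ and $(\delta_j)$ alone. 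Both arguments encode the same underlying fact, namely that the weights determine the shape of the numeration tree, but yours buys a little more: it shows the defining digit word is recoverable from the weight sequence and yields an effective, positional-style procedure for computing representations, whereas the paper's version is shorter because it simply reuses the tree-shape induction already set up for \cref{lem:simplification-fabrelike}. If you write yours up, make explicit (i) that the weight sequence of these systems is $U_\ell=\len{\mu^\ell(a_1)}$ (via \cref{cor: fixed point case}, since $E_\mu=\{a_1\}$), so equal weights do give equal $L_{1,\ell}$, and (ii) the loop invariant of your algorithm (current chain position $j$ and remainder $c$ equal to the column offset of $n$ inside the depth-$(i+1)$ subtree of the current node, with $0\le c<L_{j,i+1}$), which is what makes the ``easy induction'' on depth precise, including the check that the leading digit is nonzero because $n\ge U_{\ell-1}$.
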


In a sense, the weight sequence completely determines the numeration system provided that it is based on a Fabre-like substitution. In what follows, we will call two numeration systems over $\N$ \emph{equal} if every natural number is represented by the same word in both systems. Note that this property does not hold in general as shown in the next example. 

\begin{example}
Consider the substitutions $\mu \colon a\mapsto bcd,\, b\mapsto ef,\, c\mapsto e^2 g,\, d\mapsto d,\, e\mapsto ad,\, f\mapsto a^2d,\, g\mapsto a$ and $\mu'$ defined in the same way but with $\mu'(a)=cbd$ instead of $bcd$. 
Then, if $u$ and $u'$ are the periodic points of period $p=3$ starting with $a$ of $\mu$ and $\mu'$ respectively and $r=0$, the corresponding Dumont--Thomas numeration systems associated with $u$ and $r$ and with $u'$ and $r$ have the same sequence of weights,  but the representations of some numbers differ: for example,  we have $\rep_{u,0}(4)=\texttt{012}$,  $\rep_{u',0}(4)=\texttt{020}$,  $\rep_{u,0}(9)=\texttt{120}$,  and $\rep_{u',0}(9)=\texttt{112}$.
\end{example}

\begin{proof}[Sketch of the proof of~\cref{pro: unique decomposition}]
From the equality of the weight sequences, we deduce that $\len{\mu^\ell(a)}=\len{\mu'^{\ell}(a')}$ for every integer $\ell$ as $a\in E_\mu$ and $a'\in E_{\mu'}$. Then, we prove that, for every $k\ge 0$,  the pair of trees $\mathcal{T}_{\mu,a}^{\leq k}$ and $\mathcal{T}_{\mu',a'}^{\leq k}$ differ only by their labeling, using the same method as for the claim in~\cref{lem:simplification-fabrelike}. Since the two trees differ only by their labeling,  $\rep_{u,0}(n)=\rep_{u',0}(n)$ for every  natural number $n$.
\end{proof}

Let us quickly recall some other milestones of numeration systems. 
For the reader not familiar with the classical theory and definitions,  see,  for instance,  \cite[Chapter~2]{CANT10} and~\cite[Chapter~2]{Rigo-2014-2}.
In 1957, R\'{e}nyi~\cite{Renyi-1957} introduced $\beta$-numeration systems, a way to represent positive real numbers with the help of a real base $\beta>1$. 
Among all non-negative real numbers, $1$ is the most distinguishable,  and the notation $d_{\beta}(1)$ is used for its representation in base $\beta$. 
Additionally, we define its \emph{quasi-greedy representation} $d_{\beta}^*(1)$ to be equal to $(d_1\cdots d_{\ell-1}(d_{\ell}-1))^\omega$ if $d_{\beta}(1)=d_1\cdots d_{\ell}0^\omega$,  and $d_{\beta}(1)$ otherwise.
Of particular interest are the so-called \emph{Parry numbers}, which are real numbers $\beta$ such that $d_{\beta}(1)$ is either finite ($\beta$ is then a \emph{simple Parry} number) or ultimately periodic~\cite{Parry-1960}. 

In an article of Bertrand-Mathis~\cite{Bertrand-Mathis-1989}, later corrected by Charlier, Cisternino and Stipulanti~\cite{Charlier-Cisternino-Stipulanti-2022}, a link is made between R\'{e}nyi numeration systems and some greedy positional numeration systems for natural numbers (numeration systems over $\N$ that are positional in our sense, with the extra condition that the representation can be computed by a greedy algorithm). 
If we write $d_{\beta}^*(1)=d_1d_2\cdots $,  we define the sequence of weights $(U_i)_{i \ge 0}$ by $U_0=1$ and the relation $U_i = d_1U_{i-1}+d_2U_{i-2}+\ldots+d_i U_0+1$ for each $i \ge 1$.
In the case where $\beta$ is a Parry number, this relation can be simplified to a linear recurrence relation for $(U_i)_{i \ge 0}$. 
This weight sequence, when combined with a greedy algorithm, defines a positional numeration system called a \emph{canonical Bertrand numeration system},  which enjoys properties such as the numeration language having the same set of factors as the language of the R\'{e}nyi numeration system with base $\beta$. 
Two other notable properties of such numeration systems are the following:

\begin{property}
\label{property: extending with 0} 
A word $w$ is the representation of some natural number if and only if $w0$ is.
\end{property}

\begin{property}
\label{property: lex greatest rep} 
The lexicographically greatest representations of every length are all prefixes of one another.
\end{property}

However, these properties characterize Bertrand numeration systems only if we consider \emph{all} Bertrand numeration systems: in addition to canonical ones,  we consider the \emph{non-canonical} Bertrand numeration systems,  obtained just like the canonical ones but with the word $d_{\beta}^*(1)$ replaced by $d_{\beta}(1)$,  and the \emph{trivial} Bertrand numeration system based on the weights $U_{i}=i+1$,  with $i \ge 0$. 
This is the nature of the correction brought by Charlier et al. in~\cite{Charlier-Cisternino-Stipulanti-2022}.

A few of years after Bertrand-Mathis' paper, Fabre~\cite{Fabre-1995} introduced another way to view canonical Bertrand numeration systems. If $\beta$ is a Parry number with $d_\beta^*(1)= d_1\cdots d_n(d_{n+1}\cdots d_{n+m})^\omega$ for some $m,n$,  we introduce the substitution
\begin{align}
\label{eq: substitution beta}
\mu_{\beta} \colon 1\mapsto 1^{d_1}2,\, 2\mapsto 1^{d_2}3,\ldots, (n+m-1)\mapsto 1^{d_{n+m-1}}(n+m),  (n+m)\mapsto 1^{d_{n+m}}(n+1).
\end{align}
Note that Fabre defines his substitutions not by using the word $d_{\beta}^*(1)$ like we do but using the word $d_{\beta}(1)$. Still, the substitution in~\cref{eq: substitution beta} is the same as the one defined by Fabre. The case where $d_{\beta}(1)$ is finite corresponds to the case where $n=0$ with our notation.

Fabre then shows that $|\mu_\beta^{\ell}(1)|$  is the integer $U_{\ell}$ defined for the canonical Bertrand numeration system, and his~\cite[Theorem~2]{Fabre-1995} establishes the equality between the Dumont--Thomas numeration system associated with $\mu_{\beta}$ and the canonical Bertrand numeration system associated with $\beta$.

As an aside, we note that the term ``conjugate substitutions'' used by Fabre in~\cite[Section~3.1]{Fabre-1995} can be thought of in terms of the trees associated with those substitutions. 
Following Fabre's notation,  for two substitutions $\mu \colon A^* \to A^*$ and $\nu\colon B^* \to B^*$ with seeds $a\in A$ and $b\in B$ respectively,  we write $\mu\rightarrow\nu$ if there exists a morphism $h \colon A^* \to B^*$ such that  $h(a)=b$ and $h(\mu(c))=\nu(h(c))$ for every $c\in A$.
We then say that $\mu$ and $\nu$ are \emph{conjugates} if there exists a third substitution $\tau$ such that $\tau\rightarrow\mu$ and $\tau\rightarrow\nu$.

\begin{proposition}
\label{pro: conjugate substitutions}
Two substitutions $\mu \colon A^* \to A^*$ and $\nu\colon B^* \to B^*$ with respective seeds $a$ and $b$ are conjugate exactly when the trees $\mathcal{T}_{\mu,a}$ and $\mathcal{T}_{\nu,b}$ are equal up to relabeling. 
\end{proposition}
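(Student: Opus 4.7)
My plan is to prove both directions by a direct construction, using pairs of corresponding nodes in the two trees as the alphabet of a common substitution.

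For the backward implication ($\Leftarrow$), I would assume that $\mathcal{T}_{\mu,a}$ and $\mathcal{T}_{\nu,b}$ coincide up to relabeling, fix an ordered-tree isomorphism between them, and consider the subset $C \subseteq A\times B$ of pairs $(c,d)$ that appear as labels of matched nodes. I would then define $\tau\colon C^*\to C^*$ by $\tau(c,d) = (c_1,d_1)\cdots (c_n,d_n)$ whenever $\mu(c) = c_1\cdots c_n$ and $\nu(d) = d_1\cdots d_n$; the two lengths coincide because the trees have the same shape at every node, and the pairs $(c_i,d_i)$ stay in $C$ because children of corresponding nodes are themselves corresponding. Since $|\tau^k(a,b)| = |\mu^k(a)|$, the seed $(a,b)$ is growing, so $\tau$ is a substitution. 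Then the two projections $h_1\colon (c,d)\mapsto c$ and $h_2\colon (c,d)\mapsto d$ are morphisms satisfying $h_1(a,b)=a$, $h_2(a,b)=b$, and $h_i\circ\tau=\sigma_i\circ h_i$ with $\sigma_1=\mu$, $\sigma_2=\nu$ by construction, witnessing $\tau\to\mu$ and $\tau\to\nu$.

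For the forward direction ($\Rightarrow$), I would assume $\tau$ is a common ancestor with morphisms $h_1,h_2$ satisfying Fabre's relations. Induction on $k$ gives $h_1(\tau^k(s)) = \mu^k(a)$ and $h_2(\tau^k(s)) = \nu^k(b)$ for every $k\ge 0$, where $s$ is the seed of $\tau$. Under the convention, standard in Fabre's setting, that $h_1$ and $h_2$ are letter-to-letter codings, each $h_i$ induces an isomorphism of ordered rooted trees from $\mathcal{T}_{\tau,s}$ onto $\mathcal{T}_{\mu,a}$ (respectively $\mathcal{T}_{\nu,b}$), by sending the node in column $j$ of level $k$ with label $x$ to the node in the same position with label $h_i(x)$. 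Composing these two isomorphisms yields an ordered-tree isomorphism between $\mathcal{T}_{\mu,a}$ and $\mathcal{T}_{\nu,b}$, i.e.\ equality up to relabeling.

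The main obstacle I expect lies in the forward direction, specifically in pinning down that $h_1$ and $h_2$ must be letter-to-letter codings: were one to allow an $h_i$ to blow up some letter into a longer word, the trees $\mathcal{T}_{\mu,a}$ and $\mathcal{T}_{\nu,b}$ could end up with different numbers of nodes at each level and fail to coincide up to relabeling, so the proposition would fail in that generality. Once this convention is in place, both directions reduce to bookkeeping; the remaining effort is to check that the substitution $\tau$ built from corresponding node labels is well-defined, has non-empty images on all letters, and qualifies as a substitution in the sense of~\cref{sec:prelim}.
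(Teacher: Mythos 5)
Your proof follows essentially the same route as the paper: for the backward direction the paper likewise builds a product substitution $\mu\times\nu$ on pairs of corresponding node labels and projects onto each factor, and for the forward direction it likewise applies the morphism witnessing $\tau\to\mu$ to every node label, treating it as a pure relabeling of $\mathcal{T}_{\tau,s}$. Your explicit caveat that the conjugacy morphisms must act letter-to-letter on labels is exactly the (implicit) reading the paper adopts in that step, so the argument is correct and matches the paper's.
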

\begin{proof}
On the one hand, if the trees $\mathcal{T}_{\mu,a}$ and $\mathcal{T}_{\nu,b}$ are equal up to relabeling, we may construct a tree with the same graph structure but with labels in $A\times B$ obtained by simply joining the labels of the corresponding nodes in the two trees. 
Then, this new tree gives a substitution $\mu\times\nu$ on $A\times B$ that is such that $\mu\times\nu \rightarrow \mu$ and $\mu\times\nu \rightarrow \nu$,  which implies that $\mu$ and $\nu$ are conjugate. 

On the other hand, if $\tau\rightarrow\mu$ for some substitution $\tau$,  applying the morphism $h$ on the label of every node in the tree associated with $\tau$ gives the tree associated with $\mu$ (which must have the same shape since we only have changed the labels). 
By applying this result twice,  we conclude that the trees associated with two conjugate substitutions have the same shape and differ only by their labeling.
\end{proof}

With these milestones recalled, let us go back to the study of our Dumont--Thomas numeration systems based on Fabre-like substitutions. We first note that all Bertrand numeration systems associated with Parry numbers are a particular case of Dumont--Thomas numeration systems (and not just the canonical ones as proven by Fabre).

\begin{proposition}\label{prop: Bertrand-implique-DTFabrelike}
Every Bertrand numeration system associated with a Parry number is equal to some Dumont--Thomas numeration system associated with a Fabre-like substitution.
\end{proposition}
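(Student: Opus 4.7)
The plan is to split the proof into three cases according to the three kinds of Bertrand numeration systems recalled above, namely canonical, non-canonical, and trivial. The canonical case is handled directly by Fabre's result: the substitution $\mu_\beta$ defined in~\cref{eq: substitution beta} is already of the form~\cref{eq: Fabre like substitution}, so it is Fabre-like, and~\cite[Theorem~2]{Fabre-1995} yields the desired equality between the Dumont--Thomas numeration system associated with $\mu_\beta$ and the canonical Bertrand numeration system associated with $\beta$. For the trivial Bertrand numeration system with weights $U_i = i+1$, I would propose the two-letter Fabre-like substitution $\mu \colon a_1 \mapsto a_1 a_2,\, a_2 \mapsto a_2$. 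Its fixed point $u = a_1 a_2^\omega$ has growing seed $a_1$ and $a_1$ is the unique non-final letter, so~\cref{cor: fixed point case} applies and yields the weights $|\mu^i(a_1)| = i+1$ on the alphabet $\{\ttt{0},\ttt{1}\}$, matching the trivial system exactly.

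The main work lies in the non-canonical case, which only occurs for simple Parry numbers $\beta$ with $d_\beta(1) = d_1 \cdots d_\ell$ finite. I would introduce the Fabre-like substitution
\[
\nu_\beta \colon a_1 \mapsto a_1^{d_1} a_2,\quad a_2 \mapsto a_1^{d_2} a_3,\quad \ldots,\quad a_\ell \mapsto a_1^{d_\ell} a_{\ell+1},\quad a_{\ell+1} \mapsto a_{\ell+1},
\]
which extends Fabre's original construction by a single appended fixed letter. Since $d_1 \ge 1$, the letter $a_1$ is growing and $\nu_\beta$ admits a fixed point with seed $a_1$, while $a_1$ remains the unique non-final letter, so~\cref{cor: fixed point case} again applies. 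A short induction on $i$ first shows that $|\nu_\beta^i(a_{\ell+1})| = 1$ for every $i$, and then establishes that, for $1 \le j \le \ell$ and $i$ large enough,
\[
|\nu_\beta^i(a_j)| = \sum_{k=0}^{\ell-j} d_{j+k}\, |\nu_\beta^{i-1-k}(a_1)| + 1,
\]
the $+1$ being the contribution of the trailing $a_{\ell+1}$ eventually introduced when iterating. Setting $j = 1$ yields exactly the non-canonical Bertrand recurrence $U_i = d_1 U_{i-1} + \cdots + d_\ell U_{i-\ell} + 1$; a small separate verification takes care of the boundary values for $i < \ell$.

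Once the weight sequences are shown to match in each of the three cases, the remaining point is to conclude that the two numeration systems are \emph{equal}, not merely that their weight sequences coincide. I expect this to be the subtlety of the argument that deserves the most care. For this step, I would rely on the greedy nature of both systems together with the matching of their digit alphabets: a positional numeration system over $\N$ with a fixed alphabet is determined by its weight sequence, because greedy representations are then unique. Bertrand systems are greedy by definition, and on the Dumont--Thomas side the representation of $n$ is the label of the shortest path from the root to a node in column $n$, which coincides with the greedy output in a Fabre-like substitution since the available digits at each level form an initial segment $\{\ttt{0},\ttt{1},\ldots,d_1\}$. As both alphabets reduce to $\{\ttt{0},\ttt{1},\ldots,\max_i d_i\}$, the representations then agree integer by integer, closing the argument.
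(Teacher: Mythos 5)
Your overall strategy matches the paper's proof: the same three-way case split, Fabre's Theorem~2 for the canonical case, the substitution $a\mapsto ab,\ b\mapsto b$ for the trivial case, and, for a simple Parry number, your $\nu_\beta$ is exactly the paper's $\mu_\beta'$ (the Fabre substitution built from the ``improper'' expansion $d_\beta(1)0^\omega$, with the appended fixed letter $a_{\ell+1}$ supplying the $+1$ in the recurrence). Your explicit verification that $\len{\nu_\beta^i(a_1)}$ satisfies the non-canonical Bertrand recurrence is correct and is a reasonable supplement to what the paper leaves implicit.

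However, your concluding step contains a genuine gap. You derive equality of the two systems from ``both are greedy with the same weights,'' and you justify greediness of the Dumont--Thomas side by the observation that the digits available at each node form an initial segment. That justification is false in general: \cref{ex: DMNS not Bertrand} in the paper exhibits the Fabre-like substitution $a\mapsto aab,\ b\mapsto aaaa$, whose Dumont--Thomas system is positional with weights $1,3,10,32,\ldots$, has initial-segment digit sets at every node, and yet satisfies $\rep_{u,0}(9)=\ttt{23}$ while the greedy representation would be $\ttt{30}$. (Note also that at a node labeled $a_j$ the available digits are $\{\ttt{0},\ldots,d_j\}$, not $\{\ttt{0},\ldots,d_1\}$, which is precisely how non-greedy representations arise.) So ``positional with the same alphabet and weights'' does not by itself force equality with the Bertrand system; what makes the conclusion true in your setting is that $d_\beta(1)$ (resp.\ $d_\beta^*(1)$) satisfies the lexicographic Parry condition $d_id_{i+1}\cdots \preccurlyeq_{\text{lex}} d_1d_2\cdots$, which is exactly what guarantees that the Dumont--Thomas factorization of the fixed point coincides with the greedy algorithm on the weights. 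This is the content of \cref{pro: equivalence with Parry condition}, but in the paper that proposition is proved \emph{using} the present one, so you cannot simply cite it here; in the non-canonical case you must either run the lexicographic/greedy argument directly or, as the paper does, appeal to Fabre's construction and theorem applied to the expansion $d_1\cdots d_{m-1}(d_m+1)0^\omega$. In the canonical case Fabre's Theorem~2 already gives equality outright, so the gap concerns precisely the non-canonical case, which is where your equality argument is being asked to do real work.
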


\begin{proof}
It is clear that the substitution $\mu_\beta$ in~\cref{eq: substitution beta} introduced by Fabre is Fabre-like when $\beta$ is a Parry number (note that the first digit of $d_{\beta}^*(1)$ is nonzero, so $d_1>1$ as expected). Therefore, the canonical Bertrand numeration systems can all be seen as special cases of our Dumont--Thomas numeration systems. 

The non-canonical Bertrand numeration systems introduced in~\cite{Charlier-Cisternino-Stipulanti-2022} also fall in this framework. 
If $d_{\beta}^*(1)=(d_1\cdots d_m)^\omega$, we create $\mu_{\beta}'$ from $\mu_{\beta}$ by adding the letter $m+1$ to the alphabet, setting $\mu_{\beta}'(m)=1^{d_m+1}(m+1)$ instead of $\mu_{\beta}(m)=1^{d_m+1}$,  and setting $\mu_{\beta}'(m+1)=(m+1)$. 
In a sense,  this corresponds to improperly setting $d_{\beta}^*(1)=d_{1}\cdots d_{m-1} (d_{m}+1)0^\omega$,  then defining a Fabre substitution from this ultimately periodic expansion. 

Since the trivial Bertrand numeration system based on the weight sequence $(i+1)_{i\ge 0}$ is the Dumont--Thomas numeration system associated with the substitution $\mu\colon a\mapsto ab,\, b\mapsto b$,  which is Fabre-like,  we conclude the proof of the statement.
\end{proof}

Although our Dumont--Thomas numeration systems clearly verify the two Properties~\ref{property: extending with 0} and~\ref{property: lex greatest rep} 
that characterize Bertrand numeration systems, the converse of~\cref{prop: Bertrand-implique-DTFabrelike} is not true, as we see in the following example.

\begin{example}
\label{ex: DMNS not Bertrand}
Consider the  Fabre-like substitution $\mu\colon a\mapsto aab,\, b\mapsto aaaa$ with fixed point $u=aabaabaaaa\cdots$.
The corresponding Dumont--Thomas numeration system is positional, with the sequence of weights starting by $1,3,10,32,\ldots$. 
We note that $\rep_{u,0}(9)=\texttt{23}$,  but this cannot happen in a Bertrand numeration system as the representation of $9$ would be $\texttt{30}$ with the given sequence of weights.
\end{example}

The catch, of course, is that Dumont--Thomas numeration systems, while positional,  do not represent numbers by applying a greedy algorithm on the sequence of weights,  but rather by applying a greedy algorithm on the factorization of their fixed point, which can lead to different results. 
This can be understood with an adaptation of the \emph{Parry condition}. To recall, this condition (first seen in \cite[Corollary 1]{Parry-1960}) states that a word $d_1d_2\cdots$ is equal to $d_{\beta}(1)$ for some $\beta>1$ if, and only if, $d_1d_2\cdots \succ 10^\omega$ and $d_1d_2\cdots \succ d_i d_{i+1}\cdots$ for every $i\geq 1$.
In the case of~\cref{ex: DMNS not Bertrand}, if the substitution $\mu$ were the Fabre substitution associated with some Parry number $\beta$, we would have $d_{\beta}^*(1)=(23)^\omega$ and $d_{\beta}(1)=240^\omega$, which contradicts the Parry condition. Thus the system cannot be equal to a Bertrand numeration system.

In fact, the Parry condition (adapted for use with $d_{\beta}^*(1)$ instead of $d_{\beta}(1)$) is all that is necessary to guarantee that the system is greedy and therefore equal to a Bertrand numeration system, as we will see in the following result.

\begin{proposition}
\label{pro: equivalence with Parry condition}
Let $\mu$ be a Fabre-like substitution as in~\cref{eq: Fabre like substitution}.
Construct the word $d_1d_2\cdots = d_1\cdots d_{k-1}(d_k\cdots d_n)^\omega$.
The Dumont--Thomas numeration system associated with $\mu$ and the seed $a_1$ is equal to a Bertrand numeration system if and only if we have $d_id_{i+1}\cdots \preccurlyeq_{\text{lex}} d_1d_2\cdots$ for each $i\ge 1$.
\end{proposition}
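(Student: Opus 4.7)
The plan is to establish a Bertrand-type recurrence for the weights of the Dumont--Thomas numeration system and then to leverage~\cref{pro: unique decomposition} and~\cref{prop: Bertrand-implique-DTFabrelike} to transfer information between $\mu$ and the Fabre-like substitution attached to a Bertrand numeration system.

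The first step is to show that the weight sequence $(U_\ell)_{\ell\ge 0}$, defined by $U_0=1$ and $U_\ell=|\mu^\ell(a_1)|$, satisfies
\[
U_\ell = d_1 U_{\ell-1} + d_2 U_{\ell-2} + \cdots + d_\ell U_0 + 1 \quad\text{for every } \ell\ge 1,
\]
where the subscripts beyond $n$ are interpreted via the periodic extension $d_1\cdots d_{k-1}(d_k\cdots d_n)^\omega$. This follows from iterating $\mu$ to get the factorization $\mu^\ell(a_1)=\mu^{\ell-1}(a_1)^{d_1}\mu^{\ell-2}(a_1)^{d_2}\cdots \mu^0(a_1)^{d_\ell}\cdot a_{\ell+1}$ and taking lengths. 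A short induction then shows that this recurrence uniquely determines $(d_i)$ from $(U_\ell)$, which will be the key uniqueness tool. In particular, $(U_\ell)$ coincides with the Bertrand weight sequence built from the word $d_1d_2\cdots$ exactly as in~\cite{Bertrand-Mathis-1989,Charlier-Cisternino-Stipulanti-2022}.

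For the forward direction, if the Dumont--Thomas system equals a Bertrand system, its weight sequence matches that of a Bertrand system built from a characteristic word $c_1c_2\cdots$: one has $c_1c_2\cdots=d_\beta^*(1)$ for a Parry number $\beta$ (canonical case), $c_1c_2\cdots=d_\beta(1)\cdot 0^\omega$ for a simple Parry $\beta$ (non-canonical case), or $c_1c_2\cdots=10^\omega$ (trivial case). By the uniqueness obtained above, $c_i=d_i$ for every $i\ge 1$. The required inequality $d_id_{i+1}\cdots \preccurlyeq_{\text{lex}} d_1d_2\cdots$ then follows from the standard (weak) Parry condition for $d_\beta^*(1)$, from Parry's strict condition padded with zeros for $d_\beta(1)\cdot 0^\omega$, and is immediate in the trivial case.

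For the converse, I would assume the lexicographic condition holds and distinguish cases on the form of $d_1d_2\cdots$. In the trivial case $d_1d_2\cdots=10^\omega$, this is the trivial Bertrand system associated to the Fabre-like substitution $a\mapsto ab,\ b\mapsto b$. If $d_1d_2\cdots$ does not end in $0^\omega$, Parry's theorem produces a Parry number $\beta$ with $d_\beta^*(1)=d_1d_2\cdots$, and Fabre's substitution $\mu_\beta$ from~\cref{eq: substitution beta} is Fabre-like with the same characteristic word as $\mu$, hence the same weight sequence. Otherwise $d_1d_2\cdots=d_1\cdots d_m\cdot 0^\omega$ with $d_m\ne 0$ and $(m,d_1)\ne(1,1)$; here the lexicographic condition upgrades to Parry's strict condition, yielding a simple Parry $\beta$ with $d_\beta(1)=d_1\cdots d_m$ and the non-canonical Fabre-like substitution recalled in the proof of~\cref{prop: Bertrand-implique-DTFabrelike}. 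In every case,~\cref{pro: unique decomposition} applied to $\mu$ and this Fabre-like substitution ensures that both Dumont--Thomas systems produce identical representations, and~\cref{prop: Bertrand-implique-DTFabrelike} identifies the latter with the Bertrand system. The main obstacle I anticipate is the case analysis needed to pair each shape of $d_1d_2\cdots$ with the correct flavor of Bertrand system, and invoking the right form of Parry's theorem so that the weak lexicographic condition in the statement cleanly yields the existence of $\beta$.
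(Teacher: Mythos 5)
Your proposal is correct, and while your converse follows essentially the paper's route, your forward direction is genuinely different. For the converse you perform the same three-way case analysis on the shape of $d_1d_2\cdots$ (the trivial word $\ttt{1}\ttt{0}^\omega$, words not ending in $\ttt{0}^\omega$, words ending in $\ttt{0}^\omega$), matching each case with the trivial, canonical or non-canonical Bertrand system through the Fabre substitution of~\cref{eq: substitution beta}; your explicit appeal to~\cref{pro: unique decomposition} to pass from ``same weight sequence'' to ``same representations'' is in fact slightly more careful than the paper, which asserts that $\mu$ equals $\mu_\beta$ or $\mu_\beta'$ outright. For the forward direction the paper argues combinatorially: if the condition fails at some $i$, the lexicographically greatest representations of lengths $j$ and $j-i$ are $d_1\cdots d_j$ and $d_1\cdots d_{j-i}$, so the numeration language is not suffix-closed, hence cannot be a greedy positional (Bertrand) system. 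You instead derive the Bertrand-type recurrence $U_\ell=d_1U_{\ell-1}+\cdots+d_\ell U_0+1$ from the factorization of $\mu^\ell(a_1)$, observe that the recurrence determines the digits from the weights, and pull the weak Parry condition back from the Bertrand characteristic word; this gives a uniform mechanism that also drives your converse, while the paper's argument is more elementary and pinpoints exactly where greediness fails. Two steps you should make explicit to close small gaps: first, that equality of the two numeration systems forces equality of the weight sequences --- justify this by evaluating the word $\ttt{1}\ttt{0}^\ell$, which represents $\len{\mu^\ell(a_1)}$ in the Dumont--Thomas system (via~\cref{cor: fixed point case}) and the $\ell$-th weight in any positional Bertrand system; second, when $d_1d_2\cdots$ is purely periodic the strict condition of~\cite{Parry-1960} does not apply directly, so the ``right form of Parry's theorem'' you invoke must be the characterization of quasi-greedy expansions, or, as the paper does, one replaces the word by $d_1\cdots d_{\ell-1}(d_\ell+1)\ttt{0}^\omega$ and uses~\cite[Lemma~4]{Charlier-Cisternino-Stipulanti-2022}.
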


\begin{proof}
Assume first that $d_id_{i+1}\cdots \succ_{\text{lex}} d_1d_2\cdots$ for some $i\ge 1$. 
In this case, we must have $d_{i}\cdots d_{j-1} \succ_{\text{lex}} d_1\cdots d_{j-i}$ for some $i,j$. 
However, it is not hard to see that the lexicographically greatest representations of length $j$ and $j-i$ in our numeration system must be $d_1\cdots d_j$ and $d_1\cdots d_{j-i}$ respectively. 
Since the suffix of length $j-i$ of the former word is lexicographically greater than the latter, our system cannot be equal to a greedy positional numeration system (it is not suffix-closed), and therefore cannot be equal to a Bertrand numeration system.

For the other direction, assume that
\begin{equation}\label{eq: parry-condition-db*}
d_id_{i+1}\cdots \preccurlyeq_{\text{lex}} d_1d_2\cdots,  \quad \forall i\ge 1.
\end{equation}
holds. 
We consider two cases depending on the periodicity of $d_1d_2\cdots$.

As a first case, if $d_1d_2\cdots$ is not purely periodic, the inequality sign can be replaced by a strict inequality in~\cref{eq: parry-condition-db*} (which corresponds to the Parry condition). 
If $d_1d_2\cdots = 10^\omega$, then $\mu\colon a\mapsto ab,\, b\mapsto b$ and our Dumont--Thomas system is equal to the trivial Bertrand numeration system associated with weights $(i+1)_{i \ge 0}$. 
If $d_1d_2\cdots$ ends with $0^\omega$ and is not $10^\omega$, then $d_1d_2\cdots$ is equal to $d_{\beta}(1)$ for some simple Parry number $\beta$.
Then,  $\mu$ is equal to the Fabre substitution $\mu_{\beta}'$ modified to fit the non-canonical Bertrand system (as in the proof of~\cref{prop: Bertrand-implique-DTFabrelike}) and our Dumont--Thomas numeration system is equal to the non-canonical Bertrand numeration system associated with $\beta$. Finally,  if $d_1d_2\cdots$ does not end in $0^\omega$, then it is equal to $d_{\beta}(1)=d_{\beta}^*(1)$ for some non-simple Parry number $\beta$. 
In this case, $\mu=\mu_{\beta}$ and our Dumont--Thomas numeration system is equal to the canonical Bertrand numeration system associated with $\beta$.

As a second case, if $d_1d_2\cdots $ is purely periodic with minimal period $\ell$ and is equal to $(d_1\cdots d_\ell)^\omega$, then $d_1\cdots d_{\ell-1}(d_\ell+1)0^\omega$ also verifies~\cref{eq: parry-condition-db*}, this time with a strict inequality (see~\cite[Lemma~4]{Charlier-Cisternino-Stipulanti-2022}). 
This new word is equal to $d_{\beta}(1)$ for some simple Parry number $\beta$, thus $d_1d_2\cdots$ is equal to $d_{\beta}^*(1)$ for the same $\beta$,  our $\mu$ is the associated Fabre substitution,  and our numeration system is the canonical Bertrand numeration system associated with $\beta$. 

In all cases, we have shown that the Dumont--Thomas numeration system associated with $\mu$ is equal to some Bertrand numeration system, as desired.
\end{proof}

While we have focused on linking Dumont--Thomas numeration systems with the existing Bertrand numeration systems, it can also be noted that positional Dumont--Thomas systems could be used to generalize Bertrand systems.

\begin{example}
\label{ex: two alternate substitutions}
Let us consider the substitution $\mu \colon \{a,b,c,A,B,C,D\}^* \to \{a,b,c,A,B,C,D\}^*$ defined by $a\mapsto AAB$,  $b\mapsto C$,  $c \mapsto D$,  $A \mapsto ab$,  $B \mapsto c$,  $C \mapsto ab$,  $D \mapsto ac$.
This substitution is irreducible rather than primitive,  i.e., its adjacency matrix is irreducible but not primitive.
It is not Fabre-like and \cref{cor: fixed point case,cor: primitive case} cannot be applied to it, but the associated Dumont--Thomas numeration systems are still positional,  as there is only one non-final letter at each of the two parities.

In addition,  the Dumont--Thomas numeration systems associated with $\mu$ do not verify \cref{property: extending with 0,property: lex greatest rep} that characterize Bertrand numeration systems, but they satisfy adapted versions of these properties,  namely: a word $w$ is the representation of some natural number if and only if $w00$ is, and the lexicographically greatest representations of every length form two groups, and are prefixes of one another within each group.
\end{example}

\section{Future directions}

We end this article by presenting several directions of research related to our topic.

First, we ask about the practical aspects of the results presented here. 

\begin{problem}
Can the results in this paper be turned into an algorithmic process and, if so, what would its algorithmic complexity be?
\end{problem}

Second, we note that abstract numeration systems may be defined using trees (by representing a number using the label of a path in a tree) even when those trees are not defined using an underlying substitution. This process corresponds to abstract numeration systems whose language is prefix-closed and right-extendable. 
We thus raise the following question,  which would lead us one step closer to a characterization of positionality for general abstract numeration systems

\begin{problem}
Can our methods be adapted to the case of abstract numeration systems whose language is prefix-closed and right-extendable?
\end{problem}

Third, we have used the term Dumont--Thomas \emph{complement} numeration systems by analogy with the two's complement numeration system.
We naturally wonder the following. 

\begin{problem}
What properties are shared between our Dumont--Thomas complement and two's complement numeration systems? In particular, can addition be performed similarly to the addition of natural numbers?
\end{problem}

For the analogue of the two's complement numeration system in the Fibonacci context,  this question was answered positively in~\cite{Labbe-Lepsova-2023a}. 

Finally,  we mention that the two adapted properties stated in~\cref{ex: two alternate substitutions} could be used as the basis for a generalization of Bertrand numeration systems. We could then obtain properties similar to~\cref{prop: Bertrand-implique-DTFabrelike} in this new framework. 
Research is being conducted in this direction by \'{E}milie Charlier,  Savinien Kreczman,  and Manon Stipulanti.

\section*{Acknowledgments}

We thank \'Emilie Charlier for useful discussions.

Savinien Kreczman is supported by the FNRS Research Fellow grant 1.A.789.23F.
Manon Stipulanti is an FNRS Research Associate supported by the Research grant 1.C.104.24F. 

%
%
 \bibliographystyle{plainurl}
\bibliography{bibliography.bib}

\end{document}